\newcommand{\define}{\textbf}
\newcommand{\comment}{$\star$ \texttt}
\renewcommand{\setminus}{\smallsetminus}
\renewcommand{\oplus}{\bigoplus}
\renewcommand{\phi}{\varphi}
\renewcommand{\tilde}{\widetilde}
\renewcommand{\hat}{\widehat}
\renewcommand{\L}{\mathbb{L}}
\newcommand{\D}{\mathbb{D}}
\newcommand{\C}{\mathbb{C}}
\newcommand{\Q}{\mathbb{Q}}
\newcommand{\R}{\mathbb{R}}
\newcommand{\N}{\mathbb{N}}
\newcommand{\Z}{\mathbb{Z}}
\renewcommand{\P}{\mathbb{P}}
\newcommand{\A}{\mathbb{A}}
\renewcommand{\O}{\mathcal{O}}
\newcommand{\T}{\mathcal{T}}
\newcommand{\M}{\mathbb{M}}
\newcommand{\K}{\mathbb{K}}
\newcommand{\G}{\mathbb{G}}
\newcommand{\cF}{\mathcal{F}}
\newcommand{\bk}{\mathbf{k}}
\newcommand{\tri}{\triangle}
\def\<{\ensuremath{\langle}}
\def\>{\ensuremath{\rangle}}
\newcommand{\Tr}{{(X^\circ\!, \, \Sigma)}}
\newcommand{\TR}{{(X^\circ\!, \, \Sigma')}}
\newcommand{\Trr}{{(X^\circ\!, \, \triangle)}}
\newcommand{\Gr}{Gr}
\newcommand{\X}{\mathcal{X}}
\DeclareMathOperator{\an}{an}
\DeclareMathOperator{\codim}{codim}
\DeclareMathOperator{\im}{im}
\DeclareMathOperator{\Hom}{Hom}
\DeclareMathOperator{\Spec}{Spec}
\DeclareMathOperator{\ord}{ord}
\DeclareMathOperator{\pt}{pt}
\DeclareMathOperator{\lk}{lk}
\DeclareMathOperator{\Int}{Int}
\DeclareMathOperator{\Trop}{Trop}
\DeclareMathOperator{\prim}{prim}
\DeclareMathOperator{\coker}{coker}
\DeclareMathOperator{\Var}{Var}
\DeclareMathOperator{\In}{in}
\DeclareMathOperator{\Span}{Span}
\DeclareMathOperator{\Star}{Star}
\DeclareMathOperator{\init}{in}
\DeclareMathOperator{\ver}{vert}
\DeclareMathOperator{\vol}{vol}
\DeclareMathOperator{\gen}{gen}
\newtheorem{theorem}{Theorem}[section]
\newtheorem{lemma}[theorem]{Lemma}
\newtheorem{proposition}[theorem]{Proposition}
\newtheorem{corollary}[theorem]{Corollary}
\newtheorem*{ntheorem}{Theorem}
\theoremstyle{definition}
\newtheorem{definition}[theorem]{Definition}
\newtheorem{remark}[theorem]{Remark}
\newtheorem{example}[theorem]{Example}
\newcommand{\excise}[1]{}
\begin{document}

\title[]{%Computing cohomology 
%Limit mixed Hodge structures \\
%via
%tropical geometry
%The Tropical Motivic Nearby Fiber
Tropical Geometry and the Motivic Nearby Fiber
}
\author{Eric Katz}
\address{Department of Mathematics\\University of Texas-Austin\\Austin, TX 78712-0257}
\email{eekatz@math.utexas.edu}
\author{Alan Stapledon}
\address{Department of Mathematics\\University of British Columbia\\ BC, Canada V6T 1Z2}
\email{astapldn@math.ubc.ca}
\subjclass{14T05 (primary), 14D05 (secondary)}

\keywords{tropical geometry, monodromy, Hodge theory}
\date{}
\thanks{}

\begin{abstract}
We %deduce 
construct motivic invariants of a subvariety %$X^\circ$ in
of an algebraic torus from its tropicalization and initial degenerations.
More specifically, we introduce an invariant of a compactification %$X$ 
of such a variety called the ``tropical motivic nearby fiber.''  This invariant specializes in the sch\"on case to the Hodge-Deligne polynomial of the limit mixed Hodge structure of a corresponding degeneration.  We give purely combinatorial expressions for this Hodge-Deligne polynomial in the cases of sch\"on hypersurfaces and matroidal tropical varieties. We also deduce a formula for the Euler characteristic of a general fiber of the degeneration. 
% We also describe how the Hodge theory of $X$ can be deduced from the limiting mixed Hodge structure.  
\end{abstract}

\maketitle
 
\tableofcontents

\section{Introduction}\label{s:intro}

Let $\K$ be a field with a non-Archimedean discrete valuation $v:\K^*\rightarrow\Z$ with valuation ring $\O$ and residue field the complex numbers $\C$.  Tropicalization is a procedure which assigns to a $d$-dimensional subvariety $X^\circ$ of the torus $(\K^*)^n$, a polyhedral complex $\Trop(X^\circ)\subset\R^n$ of pure dimension $d$, such that the rational points of $\Trop(X^\circ)$ parametrize the `interesting' initial degenerations $\init_w X^\circ$ of $X^\circ$. It is a natural question to ask which geometric properties of $X^\circ$ can be recovered from %knowledge of 
the combinatorial data $\Trop(X^\circ)$, together with the algebraic geometric data of the initial degenerations.

% It is a natural question to ask which geometric properties of $X^\circ$ can be recovered from knowledge of $\Trop(X^\circ)$.   In a certain precise sense, the polyhedral complex $\Trop(X^\circ)$ keeps a record of which initial degenerations $\init_w(X^\circ)$ of $X$ for $w\in\Q^n$ are interesting.  One may think of $\Trop(X^\circ)$ as combinatorial data about $X^\circ$, and of $\init_w(X^\circ)$ as algebraic geometric data about initial degenerations.
%Under suitable genericity assumption, one may use this combinatorial and algebraic geometric data to understand the central fiber of a semi-stable degeneration $\X$ over $\O$ of a compactification $X$ of $X^\circ$.  This central fiber can in turn be used to shed light on the geometry of $X$.  In this sense, tropical geometry provides a packaging of data about $X^\circ$.

Consider a pair $(X^\circ, \Sigma)$, where $\Sigma$ is a polyhedral structure on $\Trop(X^\circ)$ that can be extended to a polyhedral subdivision of $\R^n$ and which is \emph{tropical} in the sense of Tevelev (see Definition~\ref{d:tropical}). We introduce a new invariant 
$\psi_\Trr$, called the \define{tropical motivic nearby fiber}, which lies in the Grothendieck ring $K_0(\Var_\C)$ of complex algebraic varieties. More precisely, $K_0(\Var_\C)$ 
is the free $\Z$-module generated by isomorphism classes $[V]$ of complex varieties $V$, modulo  the relation 
%\begin{equation*}
$[V] = [U] + [V \setminus U]$, 
%\end{equation*}
whenever $U$ is an open subvariety of $V$. %Multiplication is defined by 
%\[
%[V] \cdot [W] = [V \times W], 
%\]
%and 
If we set $\L := [\A^1]$, then the tropical motivic nearby fiber is defined by
\[
\psi_{\Trr} =  \sum_{F \in \Sigma} \, [X^\circ_F] \, (1 - \L)^{\dim F - \dim \tau_F}, 
\]
where  $\In_w X^\circ \cong X^\circ_F \times (\C^*)^{\dim F}$ for any $w$ in the relative interior of $F$, 
and  $\tau_F$ denotes the recession cone of $F$. Our first main result (Theorem~\ref{t:independence}) states that the  tropical motivic nearby fiber is independent of the choice of polyhedral structure $\Sigma$, provided that   
%and only depends on 
the corresponding recession fan $\triangle = \{ \tau_F \mid F \in \Sigma \}$ is fixed.

For the remainder of the introduction, we will assume that $X^\circ$ is \emph{sch\"on} in the sense of Tevelev (see Definition~\ref{d:schon}), and that the recession fan $\tri$ is unimodular. Let $\O$ be  the ring of germs of analytic functions in $\C$ in a neighborhood of the origin,  with the valuation equal to the vanishing order of a function at the origin.
Then $X^\circ$ induces a a family of subvarieties of $(\C^*)^n$ over a punctured disc $\D^*$ about the origin, and the recession fan $\tri$ induces  a compactification $X^\circ \subseteq X$, and a smooth, proper map $f:  X \rightarrow \D^*$ (see Section~\ref{s:schon}). 
%Let $X_{\gen}$ denote a fixed fiber of $f$. 
The \define{motivic nearby fiber} $\psi_f \in K_0(\Var_\C)$ of $f$ was introduced by Denef and Loeser \cite{DLGeometry}, and encodes information about the variation of  Hodge structure of a fiber $X_{\gen}$ of $f$. In fact, the motivic nearby fiber was introduced as a specialization of an invariant of a suitable extension of $f$ to a family over $\D$, called the motivic zeta function, which is defined using motivic integration. 
Our second main result is the following.

\begin{ntheorem}[Theorem~\ref{t:Schon}]
With the notation above, if $X^\circ$ is sch\"on and $\tri$ is a unimodular recession fan associated to the tropical variety $\Trop(X^\circ)$, then  $\psi_{(X^\circ, \tri)} = \psi_f$.
\end{ntheorem}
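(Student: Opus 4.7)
The plan is to use Theorem~\ref{t:independence} to compute $\psi_\Trr$ as $\psi_\Tr$ for a conveniently chosen polyhedral structure $\Sigma$ with recession fan $\tri$, and then to match this against the Denef--Loeser formula for $\psi_f$ applied to a semistable model built from $\Sigma$.

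By Theorem~\ref{t:independence}, $\psi_\Tr$ depends only on the recession fan, so I may subdivide $\Sigma$ until every bounded cell is an integral unimodular simplex. Placed at height $1$ and coned from the origin (together with the cones of $\tri$ at height $0$), $\Sigma$ determines a rational polyhedral fan $\hat{\Sigma} \subset \R^n \oplus \R$ which projects to $\R_{\geq 0}$. The associated toric variety $\P(\hat{\Sigma})$ carries a flat map to $\A^1$, and the closure $\tilde{X}$ of $X^\circ$ inside $\P(\hat{\Sigma})$ yields a proper model $\tilde{X} \to \D$ of $f$. Because $X^\circ$ is sch\"on and $\hat{\Sigma}$ is unimodular, $\tilde{X} \to \D$ is semistable: the total space is smooth, and the central fiber $\tilde{X}_0$ is a reduced simple normal crossings divisor whose components $E_v$ are indexed by the vertices $v$ of $\Sigma$, and whose nonempty intersections $E_I = \bigcap_{v \in I} E_v$ correspond to bounded simplices $F_I$ of $\Sigma$ with vertex set $I$.

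I would then invoke the Denef--Loeser formula for the motivic nearby fiber of a semistable degeneration,
$$\psi_f = \sum_{\emptyset \neq I} (1-\L)^{|I|-1}[E_I^\circ],$$
where $E_I^\circ = E_I \setminus \bigcup_{J \supsetneq I} E_J$, and match it with the tropical expression. The key observation is that each open stratum $E_I^\circ$ is itself stratified by its intersections with the toric orbits of $\P(\hat{\Sigma})$; these orbits correspond to faces $F$ of $\Sigma$ whose bounded part equals $F_I$, and the sch\"on hypothesis ensures that $\tilde{X}$ meets each such orbit transversely in a subvariety isomorphic to the initial degeneration $X^\circ_F$. In $K_0(\Var_\C)$ this gives
$$[E_I^\circ] = \sum_{F:\ F_{\mathrm{b}} = F_I} [X^\circ_F].$$
Substituting into the Denef--Loeser sum and reindexing over all faces $F \in \Sigma$, the exponent $|I|-1 = \dim F_I$ becomes $\dim F - \dim \tau_F$, and one recovers $\psi_\Tr$.

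The crux of the argument is the stratum identification $[E_I^\circ] = \sum_{F_{\mathrm{b}} = F_I}[X^\circ_F]$: one must unpack the local toric description of $\tilde{X}$ near a stratum of $\tilde{X}_0$ and verify that intersecting $\tilde{X}$ with the torus orbit of $\hat{F}$ reproduces $X^\circ_F$ on the nose, with no residual torus factors. This step depends essentially on the sch\"on hypothesis (transverse intersection with every toric stratum of $\P(\hat{\Sigma})$) together with a careful bookkeeping of recession cones within $\hat{\Sigma}$. Once this identification is in hand, matching the Denef--Loeser sum with the tropical formula reduces to a purely combinatorial rearrangement.
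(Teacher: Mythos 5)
Your proposal follows essentially the same route as the paper: reduce to a normal-crossings (semistable) model via subdivision (the paper invokes Proposition~\ref{p:normalcrossings} and the following discussion), apply the Denef--Loeser formula~\eqref{e:mnf}, stratify each $D_I^\circ$ by the locally closed pieces $X_F^\circ$ indexed by faces $F$ of $\Sigma$ with bounded part $F_I$, and match exponents by noting $|I|-1 = \dim F - \dim \tau_F$. The paper phrases the last step as ``$X_F^\circ$ is contained in precisely $\dim F - \dim\tau_F + 1$ of the irreducible components of $X_0$,'' which is exactly your reindexing identity.
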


This result has a number of Hodge-theoretic and topological consequences. 
By the results of Steenbrink and others \cite{SteLimits}, the cohomology of 
a fixed non-zero fiber $X_{\gen}$ of $f$ carries a \define{limit mixed Hodge structure}.
In particular,  
the associated limit Hodge numbers $h^{p,q}(H^m (X_\infty))$ refine the Hodge numbers $h^{p, m - p} (X_{\gen})$ of $X_{\gen}$ \eqref{e:Hodge}, and record the sizes of the Jordan blocks of the logarithm of the monodromy operator on the cohomology of $X_{\gen}$. We may consider the ring homomorphism
\[
E: K_0(\Var_\C) \rightarrow \Z[u,v], \: \: E([V]) = E(V),
\]
which takes a complex variety $V$ to its Hodge-Deligne polynomial $E(V)$, and satisfies $E(\L) = uv$ (see Section~\ref{s:limit}). Composition with the homomorphism $\Z[u,v] \rightarrow \Z$ obtained by setting $u = v  = 1$, gives the homomorphism
\[
e: K_0(\Var_\C) \rightarrow \Z,
\]
which takes a complex variety $V$ to its topological Euler characteristic $e(V)$. 
With the notation of the theorem above, Corollary~\ref{c:topological} states that the tropical motivic nearby fiber specializes to
\[
E(\psi_{(X^\circ, \tri)} ) =  E(X_\infty;u,v)  := \sum_{p,q} \left(\sum_m (-1)^m h^{p,q}(H^m(X_{\infty})) \right) u^p v^q,
\]
and  the topological Euler characteristic  of $X_{\gen}$ is given by
\[
e(X_{\gen}) = e(\psi_{(X^\circ, \tri)} ) =  \sum_{\substack{F \in \Sigma \\ \dim F = \dim \tau_F}} \, e(X_{F}^\circ). 
\]

The parameterizing complex $\Gamma_{X^\circ}$ of $\Trop(X^\circ)$ is a polyhedral complex which was introduced in \cite{HelmKatz} and admits a natural map $p:\Gamma_{X^\circ}\rightarrow\Trop(X^\circ)$ (see Section~\ref{s:tropical}).  Continuing with the notation of the theorem above, 
 in Corollary \ref{c:Betti}, we show that the Betti numbers of $\Gamma_{X^\circ}$ satisfy
\[b_m(\Gamma_{X^\circ}) \le \min_{p + q = m} h^{p,q}(X_{\gen}). \]
This strengthens the result of Helm and the first named author \cite[Corollary~5.8]{HelmKatz} that 
\[
b_m(\Gamma_{X^\circ}) \le \frac{b_m(X_{\gen})}{m + 1}. 
\]
We refer the reader to Corollary \ref{c:genus} for a further upper bound on $b_d(\Gamma_{X^\circ})$.

Finally, to use the tropical motivic nearby fiber to compute the polynomial $E(X_\infty;u,v)$, 
one needs to know the Hodge-Deligne polynomials of the complex varieties $X^\circ_F$.
In addition to the easy case where $X^\circ$ is a curve, we explore two cases where this is possible: when $X^\circ$ is a sch\"on hypersurface and when $\Trop(X^\circ)$ is a smooth tropical variety.  

In the case when $X^\circ$ is a hypersurface, the varieties $X^\circ_F$ are  sch\"on hypersurfaces in complex tori. %The sch\"on condition implies that the initial degenerations are non-degenerate, and therefore 
Their Hodge-Deligne numbers are determined by their Newton polytopes by a result of Danilov and Khovanskii \cite{DKAlgorithm}, and hence one obtains a combinatorial formula for $E(X_\infty;u,v)$.  In fact, the tropical variety is dual to a regular, lattice polyhedral decomposition $\mathcal{T}$ of the Newton polytope $P$ of $X^\circ$ in $\R^{d + 1}$, and one obtains combinatorial formulas for the limit Hodge numbers $h^{p,q}(H^m (X_\infty))$ in terms of $\mathcal{T}$ (Corollary~\ref{c:formula}). In particular, Corollary~\ref{c:p0} states that 
 for $p > 0$, 
\[
h^{p,0} (H^{d}(X_\infty)) =  \sum_{  \substack{  Q \in \T^{(0)} \\ \dim Q = p + 1}}  \# (\Int(Q) \cap \Z^n),
\]
where $\T^{(0)}$ denotes the faces of $\mathcal{T}$ whose relative interior lies in the relative interior of $P$. Also, 
\[
h^{0,0} (H^{d}(X_\infty)) =  \sum_{  \substack{  Q \in \T^{(0)} \\ \dim Q \le 1}}  \# (\Int(Q) \cap \Z^n).
\]

Matroidal tropical varieties are polyhedral complexes locally modeled on the matroid fans of Ardila and Klivans \cite{AKBergman}.  There were originally called smooth tropical varieties by Mikhalkin \cite{MikhRat}.  If $\Trop(X^\circ)$ is a matroidal tropical variety, then $X^\circ$ is sch\"on.  Moreover, the initial degenerations of $X^\circ$ are all intersections of linear subspaces of $\P^n$ with $(\C^*)^n$.   Their Hodge-Deligne numbers are determined by the matroid of that linear space due to the work of Orlik and Solomon \cite{OSHyperplanes}. Hence we determine a combinatorial formula for
 the tropical motivic fiber $\psi_{\Trr}$ (Corollary~\ref{c:sformula}).
 As an immediate consequence, we deduce the following formula for the Euler characteristic of a general fiber (Corollary~\ref{c:sEuler}),
 \[e(X_{\gen})= \sum_{\substack{F\in\Sigma\\ \dim(F)=\dim(\tau_F)}} (\chi_{_{\M_F}}(1)+\chi'_{_{\M_F}}(1)),\]
 where $\chi_{_{\M_F}}(q)$ denotes the characteristic polynomial of the matroid $\M_F$ associated to the star quotient of $F$ (see Section~\ref{s:smooth}), and
$\chi'_{_{\M_F}}(q)$ denotes the derivative of  $\chi_{_{\M_F}}(q)$. 
 
%In some sense, the latter results should be thought of the valued-field generalizations of the work of Danilov-Khovanskii and of Orlik-Solomon.
We mention some related work.  In \cite{GS}, Gross and Siebert construct 
a scheme $X_0$ from certain combinatorial data.   If $X_0$ is 
embedded in a family $\X$ over $\O$, they  determine the limit mixed Hodge structure 
in terms of the combinatorial data.  In \cite{Ruddat}, Ruddat gives a spectral sequence for determining the logarithmic Hodge groups of toric log Calabi-Yau spaces of hypersurface type in terms of tropical degeneration data and Jacobian rings.  In \cite{HelmKatz}, Helm and the first named author relate the topology of the parameterizing complex $\Gamma_{X^\circ}$ to the monodromy filtration on $H^*(X_{\gen})$ for sch\"on subvarieties $X^\circ\subset (\K^*)^n$.

%We end the introduction with a brief outline of the contents of the paper. 
%In Section~\ref{s:tropical} we recall some definitions and results from tropical geometry.
%In Section~\ref{tmnf} we define the tropical motivic nearby fiber and prove that it is independent of the choice of polyhedral structure on $\Trop(X^\circ)$ (Theorem~\ref{t:independence}). In Section~\ref{s:limit} we
%recall some results on the motivic nearby fiber and limit mixed Hodge structure of  a degeneration of complex varieties. In  Section~\ref{s:schon} we give a geometric interpretation of the tropical motivic nearby fiber for a sch\"on family of subvarieties of a torus, and deduce some Hodge-theoretic and topological consequences. In Section~\ref{s:hypersurfaces} and 
%Section~\ref{s:smooth} we apply our results in the special cases when $X^\circ$ is a hypersurface and when $\Trop(X^\circ)$ is a smooth tropical variety respectively. Finally, in
%Section~\ref{s:open} we list some open problems and in
%Appendix~\ref{s:hdp} we recall a formula for the Hodge-Deligne polynomial of a non-degenerate hypersurface. 

%%%%%%%%%%%%%%%%%%%%%%%%%%%%%%%%%%%%%%%%%%%%%%%%%%

\excise{

In this paper, we study the Hodge theory of a variety from this point of view by using the limit mixed Hodge structure as developed by Steenbrink and others \cite{SteLimits}.  Given a semi-stable degeneration of a family of smooth complex varieties defined over the unit disc $\D$, $f:Y\rightarrow \D$, there is a limit mixed Hodge structure on the generic fiber $Y_{\gen}$.  We denote the cohomology of $Y_{\gen}$ with that mixed Hodge structure by $H^*(Y_\infty)$.  The Hodge-Deligne polynomial of that mixed Hodge structure is
\[E(Y_\infty;u,v) =  \sum_{p,q} \left(\sum_m (-1)^m h^{p,q}(H^m(Y_{\infty})) \right) u^p v^q,\]
which is equal to the Hodge-Deligne polynomial of the usual Hodge structure on $Y_{\gen}$ after the specialization $v=1$.   This gives constraints on the topology and algebraic geometry of $Y_{\gen}$.

We use tropical geometry to construct the above semi-stable degeneration.  Let $\K$ be the field of germs of meromorphic functions on $\C$ in the neighborhood of the origin with the valuation equal to the vanishing order of a function at the origin.  Then a subvariety of $(\K^*)^n$ induces a family of subvarieties of $(C^*)^n$ over a punctured disc $\D^*$ about the origin.  Under a genericity condition called sch\"oness and by using the toric degeneration  techniques of Nishinou and Siebert \cite{NishSiebert} (see also \cite{LuxtonQu,HelmKatz}), one can compactify and complete this family to a semi-stable degeneration of compact smooth subvarieties $X_t$ of a toric scheme over $\D$.   This requires picking a suitable polyhedral complex structure $\Sigma$ on $\Trop(X)$.  The parameterizing complex \cite{HelmKatz} is a polyhedral complex $\Gamma_X$ which admits a natural map $p:\Gamma\rightarrow\Trop(X)$ and whose bounded cells form the dual complex of the central fiber.

One then defines the tropical motivic nearby fiber, an element of $\psi_\Tr \in K_0(\Var_\C)$ where $K_0(\Var_\C)$ is the Grothendieck ring of complex algebraic varieties.  It is given by 
\[
\psi_{\Tr} =  \sum_{F \in \Sigma} \frac{ (-1)^{\dim F} \, [\In_{w_F} X^\circ] }{ (1 - \L)^{\dim \tau_F}}, 
\]
where $\L=[\A^1]$, $w_F$ is a point in the relative interior of $F\in\Sigma$, and $\tau_F$ is the recession cone of $F$.  
In Theorem \ref{t:independence}, we prove that $\psi_{\Tr}$ is independent of the choice of suitable $\Sigma$ and only depends on its recession fan $\Delta$.  Under taking the Hodge-Deligne polynomial,
\[E:K_0(\Var_\C)\rightarrow \Z[u,v],\ E([V)])=E(V)\]
the tropical motivic nearby fiber specials to the Hodge-Deligne polynomial of the limit mixed Hodge structure.  

To compute this Hodge-Deligne polynomial, one needs to know the Hodge-Deligne polynomial of the initial degeneration $\init_w X^\circ$.  In addition to the easy case where $X^\circ$ is a curve, we explore two cases where this is possible: when $X^\circ$ is a sch\"on hypersurface and when $\Trop(X^\circ)$ is a smooth tropical variety.  In the case where $X^\circ$ is a hypersurface, the initial degenerations $\init_w X^\circ$ are all hypersurfaces in $(\C^*)^n$.  The sch\"on condition implies that the initial degenerations are non-degenerate, and therefore their Hodge-Deligne numbers are determined by their Newton polytopes by a result of Danilov and Khovanskii \cite{DKAlgorithm}.  Smooth tropical varieties are polyhedral complexes locally modeled on the matroid fans of Ardila and Klivans \cite{AKBergman}.  If $\Trop(X^\circ)$ is a smooth tropical variety, then $X^\circ$ is sch\"on.  Moreover, the initial degenerations of $X^\circ$ are all intersections of linear subspaces of $\P^n$ with $(\C^*)^n$.   Their Hodge-Deligne numbers are determined by the matroid of that linear space due to the work of Orlik and Solomon \cite{OSHyperplanes}.  In both of these cases, the Hodge-Deligne numbers of the initial degenerations are determined by $\Trop(X)$.  
In some sense, our results should be thought of the valued-field generalizations of the work of Danilov-Khovanskii and of Orlik-Solomon.

We also derive some results relating the topology of $\Gamma_X$ to the geometry of $X$.  In Corollary \ref{c:Betti}, we show
\[b_m(\Gamma_X) \le \min_{p + q = m} h^{p,q}(X_t) \]
while in Corollary \ref{c:genus} under assumptions about the singularities of $X_t$
\[b_d(\Gamma_X) + \sum_{ v \in \ver(\Gamma_X) } p_g(X_v) \le p_g(X_t) \]
where $X_v$ is the component of the central fiber $\X_0$ corresponding to $v\in\Gamma_X$.

\comment{More results like this?}

\comment{Payne's result?}

\comment{We should have some references to our actual theorems}

\comment{What did Sam do}

}

\medskip
\noindent
{\it Notation and conventions.}  
All schemes are over the complex numbers.  For us, a variety is a  not necessarily irreducible, separated reduced scheme of finite type over $\C$.
%Unless otherwise indicated, 
Cohomology will be taken with $\Q$ coefficients, with respect to the usual (complex) topology.

\medskip
\noindent
{\it Acknowledgements.} The authors would like to thank Sam Payne for his talk `Topology of compactified tropicalizations' at MSRI, October 12, 2009. It provided a great deal of inspiration for this project. The authors would also like to thank Kalle Karu for helping us to resolve a long standing technical difficulty concerning semi-stable reduction for toric varieties. The authors also benefited greatly from the work and help of David Helm.
%We would like to thank Tristram Bogart, David Helm,  Anders Jensen, Kalle Karu, Askold Khovanskii, Diane Maclagan, Hannah Markwig, Grisha Mikhalkin, Sam Payne, Johannes Rau, and Josephine Yu for valuable discussions.
This paper arose out of discussions during working group meetings on the Topology of Tropical Varieties held at the Mathematical Sciences Research Institute during the Fall 2009 Program on Tropical Geometry.  We would like to thank the organizers of this program and the MSRI staff for making such a program possible.

\section{Tropical geometry}\label{s:tropical}

We review some notions for studying tropical geometry in the non-constant coefficient case.  Many of these were developed by Tevelev \cite{TevComp} and Hacking \cite{Hacking} in the constant coefficient case and extended to the non-constant coefficient case by Luxton-Qu \cite{LuxtonQu}, Qu \cite{QuToric} and Helm-Katz \cite{HelmKatz}.
Let $\K$ be a field with a non-Archimedean discrete valuation $v:\K^*\rightarrow\Z$.  Let $\O$ be its valuation ring with residue field $\bk$.  The examples to keep in mind are 
\begin{enumerate}
\item $\K=\C((t))$, $\O=\C[[t]]$, and $\bk=\C$,
\item $\K=\C(t)$, $\O=\C[t]_{(t)}$, and $\bk=\C$,
\item  $\O$ is the ring of germs of analytic functions in $\C$ in a neighborhood of the origin, $\K$ is its field of fractions, and $\bk=\C$.
\end{enumerate}
 We suppose for ease of exposition that there is a section of the extension of the valuation $v:\overline{\K}^*\rightarrow\Q$ given by $\Q\ni w\mapsto t^w\in\overline{\K}^*$.  Let $\T \cong \G_m^n$ be a split $n$-dimensional torus over $\O$, and let
$T = \T \times_{\O} \K$ be the corresponding torus over $\K$.  Let $N$ be the cocharacter lattice, $N=\Hom(\K^*,T)$ and $N_\R=N\otimes\R$. For $w=(w_1,\dots,w_n)\in \Q^n$, write $t^w=(t^{w_1},\dots,t^{w_n})\in T$.

Let $X^\circ$ be a subvariety of $T$ and $w\in \frac{1}{M}\Z^n$, let $\K'=\K[[t^\frac{1}{M}]]$.  Then the initial degeneration, $\init_w X^\circ\subset (\bk^*)^n$ is given by
\[\init_w X^\circ=\overline{t^{-w} X^\circ_{\K'}}\times_{\O'} \bk\]
where the closure is taken in $\T$. 

\begin{definition} The tropical variety $\Trop(X^\circ)$ associated to $X^\circ$ is the closure in $\R^n$ of the set of points
\[\{w\in\Q^n|\init_w X^\circ\neq \emptyset\}.\]
\end{definition}

For $X_\bk^\circ\subset T_\bk$, we define the tropicalization of $X_\bk$ by base-change.  Let $\K'=\bk((t))$ and $X_{\K'}^\circ=X^\circ \times_\bk \K'\subset (\K'^*)^n$.  Set $\Trop(X_\bk^\circ)=\Trop(X_{\K'}^\circ)$.  

Let $\Sigma$ be a subcomplex of a rational polyhedral subdivision of $\R^n$. One can define a toric scheme $\P(\Sigma)$ over $\O$ from $\Sigma$.  First, let $\tilde{\Sigma}$ be the union of the cones over faces of $\Sigma\times\{1\}$ in $N_\R\times \R_{\geq 0}$.  This forms a rational polyhedral fan in $\R^n \times \R_{\geq 0}$ by Corollary 3.12 of \cite{BGS}.   Let $\P(\tilde{\Sigma})$ be the corresponding toric variety.  The projection $N_\R\times\R_{\geq 0}\rightarrow \R_{\geq 0}$ induces a projection $\P(\tilde{\Sigma})\rightarrow\A^1$.  Let $\Spec \O\rightarrow \A^1=\Spec \Z[u]$ be induced by the universal homomorphism $\Z\rightarrow \O$ and $u\mapsto t$.  Then $\P(\Sigma)=\P(\tilde{\Sigma})\times_{\Z[u]} \O$.   Let $\Delta$ be the recession fan of $\Sigma$, that is the fan given by $\tilde{\Sigma}\cap (N_\R\times \{0\})$.  The generic fiber of $\P(\Sigma)$ is the toric variety $\P(\Delta)$.  For cones $\sigma\in\Delta$, let $U_\sigma$ be the corresponding torus orbit of $\P(\Delta)$.   
See \cite[Section~2]{HelmKatz} for a more detailed description of the relevant construction.

Faces $F\in\Sigma$ correspond to torus orbits $U_F$ contained in the central fiber of $\P$.  There is an inclusion reversing correspondence $F\mapsto \overline{U}_F$ between faces and orbit closures.
The torus orbits $U_F$ give a decomposition of the central fiber of $\P$.  We define $T_F\subset T_\bk$ to be the torus fixing $U_F$ pointwise and $N_F$ to be the cocharacter lattice of $T_F$.  Note that $N_F=\Span_\R(F-w)\cap N$ where $w\in \Int(F)$.  

Let $\X$ denote the closure of $X^\circ$ in $\P(\Sigma)$, and let $X$ be the generic fiber of $\X$.  

\begin{definition}\label{d:tropical} The pair $(X^\circ,\P(\Sigma))$ is {\em tropical} if the map
$$m: \T \times_ {\O} \X \rightarrow \P(\Sigma)$$
is faithfully flat, and $\X \rightarrow \O$ is proper.  
\end{definition}

We then have the following, due to Tevelev \cite{TevComp} in the constant coefficient case.
In the non-constant coefficient case they can be found in \cite{LuxtonQu}.

\begin{proposition}\cite[Proposition~6.8]{LuxtonQu} If $X^\circ$ is not invariant under any torus in $T$, then there is a toric scheme $\P(\Sigma)$ such that $(X^\circ,\P(\Sigma))$ is tropical.  
\end{proposition}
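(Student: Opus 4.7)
The strategy is two-step: first choose a polyhedral complex supported on $\Trop(X^\circ)$ in order to obtain properness, then refine it to ensure faithful flatness of the multiplication map.

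First, I would invoke the structure theorem for tropicalizations: $\Trop(X^\circ)$ is a finite rational polyhedral subset of $\R^n$ of pure dimension $\dim X^\circ$. Hence there exists a rational polyhedral decomposition $\Sigma_0$ of $\R^n$ in which $\Trop(X^\circ)$ is the support of a subcomplex $\Sigma \subseteq \Sigma_0$. Lifting to the fan $\tilde{\Sigma}_0$ in $N_\R \times \R_{\geq 0}$, the construction of $\P(\Sigma)$ over $\O$ described earlier in the section applies.

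Next, one checks properness of $\X \to \Spec \O$. The generic fiber is the closure of $X^\circ$ in the complete toric variety $\P(\Delta)$ over $\K$, where $\Delta$ is the recession fan, and this is proper over $\K$. For the special fiber, the fundamental theorem of tropical geometry (as stated for the non-constant coefficient setting in \cite{HelmKatz}) says that the closure $\X$ meets the torus orbit $U_F$ if and only if the relative interior of $F$ lies in $\Trop(X^\circ)$. Since $|\Sigma| = \Trop(X^\circ)$, every orbit of $\P(\Sigma)$ that $\X$ meets corresponds to some $F \in \Sigma$, and the central fiber of $\X$ is a closed subscheme of the union $\bigcup_{F \in \Sigma} \overline{U}_F$; applying the valuative criterion to rank-one extensions of $\O$ then yields properness of $\X$ over $\Spec \O$.

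The main task is to guarantee that $m: \T \times_\O \X \to \P(\Sigma)$ is faithfully flat. Surjectivity on orbits holds automatically once $|\Sigma| = \Trop(X^\circ)$, so the essential point is flatness. Because $\P(\Sigma)$ is Cohen--Macaulay and $\T \times_\O \X$ is equidimensional over $\O$, the miracle-flatness criterion reduces flatness of $m$ to the equidimensionality of its fibers; equivalently, $\X$ must meet every orbit closure $\overline{U}_F$ in the expected codimension. Whenever this fails for some cell $F$, one refines $\Sigma$ by star-subdividing at the offending cell while keeping $|\Sigma| = \Trop(X^\circ)$ intact so that properness is preserved. The hypothesis that $X^\circ$ is not invariant under any nontrivial subtorus of $T$ is used precisely here: it guarantees that the intersections of $\X$ with torus orbits become transverse after finitely many refinements, so that the process terminates. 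This last step is the principal obstacle, where the argument of Tevelev (in the constant coefficient case) and its extension by Luxton--Qu does the real work; once this refined $\Sigma$ is in hand, both conditions of Definition~\ref{d:tropical} hold and the proof is complete.
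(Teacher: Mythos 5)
The paper does not prove this proposition; it is quoted directly from \cite[Proposition~6.8]{LuxtonQu}, with the constant-coefficient case attributed to Tevelev \cite{TevComp}. So there is no in-paper proof to compare against, but your sketch can still be judged on its own terms. The overall shape is right (choose $\Sigma$ supported on $\Trop(X^\circ)$, use the tropicalization to control which orbits $\X$ meets, then refine to get flatness), but two of the load-bearing steps do not hold up.

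First, the flatness argument. You invoke miracle flatness with its hypotheses reversed and weakened: that criterion requires the \emph{source} $\T\times_\O\X$ to be Cohen--Macaulay and the \emph{target} $\P(\Sigma)$ to be regular, whereas you ask only for the target to be Cohen--Macaulay and the source to be equidimensional. Neither of the correct hypotheses is available: toric schemes over $\O$ are generally not regular, and there is no reason for $\T\times_\O\X$ to be Cohen--Macaulay when $X^\circ$ is arbitrary. The Tevelev/Luxton--Qu argument instead proceeds via generic flatness: the non-flat locus of $m$ is $T$-invariant and closed, hence a union of orbit closures, and one subdivides along it; termination is the genuinely hard point and is proved by a boundedness argument through the Hilbert scheme (the translates of $X^\circ$ trace out a bounded, constructible family, and a sufficiently fine subdivision makes the Hilbert polynomial constant on strata). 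Your claim that ``intersections become transverse after finitely many refinements'' simply restates the conclusion rather than proving it, and is not a consequence of the non-invariance hypothesis. Second, your properness argument calls $\P(\Delta)$ ``the complete toric variety'' over $\K$; this is false in general, since the recession fan $\Delta$ is supported on a set of dimension $\dim X^\circ < n$. The closure $X$ of $X^\circ$ in $\P(\Delta)$ is nevertheless proper, but precisely because of the tropical support condition (the same valuative-criterion reasoning you gesture at for the special fiber), not because the ambient toric variety is complete. Finally, the non-invariance hypothesis enters in ensuring properness of $\X\to\Spec\O$, not in terminating the refinement process as you suggest.
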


In addition, $\Sigma$ in the above proposition can be chosen to be a subcomplex of a rational polyhedral subdivision of $\R^n$.
Moreover, tropical pairs are stable under refinement according to the following:

\begin{proposition}\label{p:tropical} \cite[Proposition~6.7]{LuxtonQu} Suppose $(X^\circ,\P(\Sigma))$ is tropical and $\P(\Sigma')\rightarrow\P(\Sigma)$ is a morphism of toric schemes corresponding to a refinement $\Sigma'$ of $\Sigma$.  Then $(X^\circ,\P(\Sigma'))$ is tropical.  Moreover, %if $\pi:\P'\rightarrow \P$ is the induced toric morphism, 
$\X'=\overline{X^\circ}\subset \P(\Sigma')$ is the inverse image of $\X$. 
\end{proposition}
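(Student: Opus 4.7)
The plan is to establish the scheme-theoretic identity $\X' = \pi^{-1}(\X)$, where $\pi : \P(\Sigma') \to \P(\Sigma)$ is the toric morphism associated to the refinement, and then read off both assertions. Since $\pi$ is a toric morphism of toric $\O$-schemes, it is proper, $\T$-equivariant, and restricts to the identity on the dense torus $T$; so $\pi^{-1}(\X)$ is a proper closed subscheme of $\P(\Sigma')$ that contains $X^\circ = \X \cap T$, and therefore also its closure $\X' = \overline{X^\circ}$.

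For the reverse inclusion I would transport faithful flatness from $m$ to the refined toric scheme. Concretely, the shear $(t,x,y) \mapsto (t,\, t^{-1}y)$ should give an isomorphism
$$(\T \times_\O \X) \times_{\P(\Sigma)} \P(\Sigma') \;\xrightarrow{\ \sim\ }\; \T \times_\O \pi^{-1}(\X),$$
intertwining the base change of $m$ with the multiplication map $m' : \T \times_\O \pi^{-1}(\X) \to \P(\Sigma')$; since faithful flatness is preserved under base change, $m'$ is faithfully flat. Now $\P(\Sigma')$ is integral (each affine chart $\Spec \O[\sigma^\vee \cap M]$ is a domain, and the charts share the dense open torus), so every associated point of the flat source $\T \times_\O \pi^{-1}(\X)$ maps to the generic point of $\P(\Sigma')$. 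In particular, every irreducible component $\T \times Z$ of $\T \times_\O \pi^{-1}(\X)$ dominates $\P(\Sigma')$, forcing $Z$ to meet $T$. But $\pi^{-1}(\X) \cap T = X^\circ$, so each such $Z$ lies in $\overline{X^\circ} = \X'$, and hence $\pi^{-1}(\X) \subseteq \X'$.

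With $\X' = \pi^{-1}(\X)$ established, the ``moreover'' assertion is immediate, and $(X^\circ, \P(\Sigma'))$ is tropical: $\X'$ is proper over $\O$ as the composition $\X' \to \X \to \Spec \O$ of proper morphisms, and $m'$ is faithfully flat by construction. The main obstacle is setting up the shear isomorphism with the correct compatibilities---this is where the $\T$-equivariance of $\pi$ is genuinely used---together with verifying integrality of $\P(\Sigma')$ over the DVR base, since neither point is entirely formal. Once these are in place, the dominance argument ruling out spurious components of $\pi^{-1}(\X)$ is routine.
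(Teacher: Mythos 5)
The paper does not prove this proposition; it simply cites Luxton--Qu \cite[Proposition~6.7]{LuxtonQu}, so there is no internal argument to compare against. Your proposal is a correct reconstruction and follows what is, to my knowledge, the standard line going back to Tevelev: the shear $(t,x,y)\mapsto(t,t^{-1}y)$ identifies the base change of $m$ along $\pi$ with $m'\colon\T\times_\O\pi^{-1}(\X)\to\P(\Sigma')$, so faithful flatness descends, and integrality of $\P(\Sigma')$ together with flatness forces all associated points of the source to lie over the generic point, which sits in $T$. One small remark on presentation: the clause ``forcing $Z$ to meet $T$, \dots hence each such $Z$ lies in $\overline{X^\circ}$'' only delivers the set-theoretic inclusion $\pi^{-1}(\X)\subseteq\X'$; but the associated-points observation you already made is exactly what upgrades this to the scheme-theoretic statement (a closed subscheme whose associated points all lie in an open $U$ equals the scheme-theoretic closure of its restriction to $U$), and combining this with flat descent along the projection $\T\times_\O\P(\Sigma')\to\P(\Sigma')$ gives $\X'=\pi^{-1}(\X)$ as schemes. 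With that phrased explicitly the argument is complete and, as you note, the two conditions for ``tropical'' then fall out of properness of $\pi$ and base change of faithful flatness.
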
  

If $(X^\circ,\P)$ is tropical, then $|\Sigma|=\Trop(X^\circ)$ by \cite[Proposition~6.5]{LuxtonQu}, so $\Sigma$ induces a polyhedral structure on $\Trop(X^\circ)$. The central fiber $X_0$ of $\X$ can be written as a union of locally closed subschemes labelled by faces of $\Sigma$.
Let $X_F^\circ=\X\cap U_F$.  Let $X_F$ denote the closure of $X_F^\circ$ in $\X$.  For $w\in\Int{F}$, the initial degeneration, $\init_w X^\circ$ is invariant under the torus $T_F$.   Moreover, there is a non-canonical isomorphism
\[\init_w X^\circ\cong T_F\times X_F^\circ. %(\X\cap U_F).
\]
In this case, $\Trop(\init_w X^\circ)$ is invariant under translation by elements in $(N_F)_\R$.  If $F$ is a top-dimensional cell of $\Sigma$, then $m(F)$, the {\em multiplicity} of $F$ is the length of the zero-dimensional scheme $\init_w X^\circ/T_F$.  With these multiplicities $\Sigma$ is a balanced weighted rational polyhedral complex.

There is a natural notion of smoothness called sch\"onness.  

\begin{definition}\label{d:schon} A subvariety $X^\circ$ of $\T$ is {\em sch\"on}
if there exists a tropical pair $(X^\circ,\P)$ such that the multiplication map
$$m: \T \times_{\O} \X \rightarrow \P$$ is smooth.
\end{definition}

\begin{proposition} (Luxton-Qu) \cite[Theorem~6.13]{LuxtonQu} \label{p:luxtonqu} Let $X^\circ$ be sch\"on.  If $\Sigma$ is any polyhedral complex supported on $\Trop(X^\circ)$, then $(X^\circ,\P(\Sigma))$ is tropical.  Furthermore, the multiplication map $m: \T \times_{\O} \X \rightarrow \P(\Sigma)$ is smooth.
\end{proposition}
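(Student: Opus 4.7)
The plan is to reduce smoothness and tropicality of $m$ to a local condition intrinsic to $X^\circ$, namely smoothness of each initial degeneration. I would proceed in three steps.

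First, I would extract an intrinsic criterion from the hypothesis. By sch\"onness, there exists some $\Sigma_0$ supported on $\Trop(X^\circ)$ with $(X^\circ, \P(\Sigma_0))$ tropical and the multiplication map $m_0$ smooth. Using $\T$-equivariance of $m_0$ together with the identification of its fibers over an orbit $U_F \subset \P(\Sigma_0)$ with torus translates of $\init_w X^\circ$ for any $w \in \Int F$, one concludes that $\init_w X^\circ$ is smooth over $\bk$ for every $w \in \Trop(X^\circ)$. This property is independent of the choice of polyhedral structure and is therefore available for use with any other $\Sigma$.

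Next, given an arbitrary $\Sigma$ supported on $\Trop(X^\circ)$, I would verify smoothness of $m \colon \T \times_\O \X \to \P(\Sigma)$ pointwise. By $\T$-equivariance it suffices to check smoothness at points $(e, x)$ with $x \in X_F^\circ$ for some $F \in \Sigma$. In a toric-affine chart of $\P(\Sigma)$ adapted to the stratum $U_F$, the closure $\X$ is cut out locally by the initial ideal of $X^\circ$ at any $w \in \Int F$, so the local model of $\X$ depends only on $\init_w X^\circ$ together with the cone over $F$ in $\tilde{\Sigma}$. Smoothness of $m$ at $(e, x)$ then translates into smoothness of this toric local model, which follows from smoothness of $\init_w X^\circ$ established in the first step. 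To make comparisons across polyhedral structures, I would if needed pass to a common refinement $\Sigma'$ of $\Sigma$ and $\Sigma_0$; by Proposition~\ref{p:tropical}, $(X^\circ, \P(\Sigma'))$ is tropical, and the local identification above shows that smoothness of $m$ and of its analogue $m'$ are both controlled by the same initial degenerations.

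Finally, the remaining conditions for tropicality follow cleanly. Properness of $\X \to \O$ comes from $|\Sigma| = \Trop(X^\circ)$ covering the entire tropicalization, so every generic-fiber limit has a specialization in $\X$; faithful flatness of $m$ is then a consequence of smoothness together with surjectivity onto the orbits $U_F$. The hard part is the local identification in the second step, specifically showing rigorously that in a toric-affine neighborhood of a point of $X_F^\circ$ the subscheme $\X$ is described purely by $\init_w X^\circ$ and the toric coordinates coming from $\tilde{\Sigma}$; this requires a careful \'etale-local computation on $\P(\Sigma)$ and the use of the section $w \mapsto t^w$ of the valuation to pick consistent coordinates.
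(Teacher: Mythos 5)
The paper itself gives no proof of this proposition: it is imported verbatim from \cite[Theorem~6.13]{LuxtonQu}. So there is no ``paper's proof'' to compare against; the relevant question is whether your sketch is sound.

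Your first step is fine and amounts to one direction of Proposition~\ref{prop:schon}, legitimately extracted from $\T$-equivariance of a smooth $m_0$ for a reference complex $\Sigma_0$. The difficulty is in your second step, which you yourself flag as ``the hard part'' but do not close. You claim that in a toric-affine chart adapted to $U_F$ the closure $\X$ of $X^\circ$ in $\P(\Sigma)$ is ``cut out locally by the initial ideal,'' so that smoothness of $m$ near $U_F$ reduces to smoothness of $\init_w X^\circ$. But the identification $\init_w X^\circ \cong T_F \times X_F^\circ$, with $X_F^\circ = \X \cap U_F$, is stated in Section~\ref{s:tropical} only under the hypothesis that $(X^\circ, \P(\Sigma))$ is already a tropical pair --- which is exactly what you are trying to establish. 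A priori, without knowing flatness (respectively faithful flatness of $m$), the scheme $\X \cap U_F$ could be cut out by a strictly larger ideal than the initial ideal, or $m$ could fail to hit some strata of $\P(\Sigma)$. Thus your plan invokes the conclusion in the step intended to prove it. Closing this requires either a Gröbner/tropical-basis argument that establishes the local model of $\X$ before any flatness is assumed, or a descent: refine $\Sigma$ and $\Sigma_0$ to a common $\Sigma'$, invoke Proposition~\ref{p:tropical} to get tropicality for $\Sigma'$, and then argue that smoothness of $m$ and the tropical-pair conditions descend along the proper toric morphism $\P(\Sigma') \to \P(\Sigma)$. You gesture at both but carry out neither, so as written the argument is circular at the decisive point.
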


Sch\"onness has a characterization in terms of initial degenerations (ref. \cite[Lemma~2.7]{Hacking}).

\begin{proposition} \label{prop:schon} The following are equivalent:
\begin{enumerate}
\item $X^\circ$ is sch\"on.
\item $\init_w X^\circ$ is smooth for all $w \in \Trop(X^\circ)$.
\end{enumerate}
\end{proposition}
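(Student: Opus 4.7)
The plan is to exploit the identification
\[
\init_w X^\circ \cong T_F \times X_F^\circ,
\]
where $F$ is the face of $\Sigma$ whose relative interior contains $w$, together with the parallel observation that the fiber of the multiplication map $m: \T \times_{\O} \X \to \P(\Sigma)$ over any point of $U_F$ is also isomorphic to $T_F \times X_F^\circ$. Because flatness of $m$ is built into the tropical pair hypothesis, the fiberwise criterion for smoothness reduces smoothness of $m$ to smoothness of all of its fibers, provided the target $\P(\Sigma)$ is smooth, which we arrange by refining $\Sigma$ to be unimodular via Proposition~\ref{p:tropical}.

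For the direction $(1) \Rightarrow (2)$, assume $X^\circ$ is sch\"on and fix a rational $w \in \Trop(X^\circ)$. By Proposition~\ref{p:luxtonqu} we may choose a polyhedral structure $\Sigma$ on $\Trop(X^\circ)$ in which $w$ lies in the relative interior of some face $F$, and such that the multiplication map $m$ for this $\Sigma$ is smooth. Smoothness of $m$ forces each fiber $m^{-1}(u) \cong T_F \times X_F^\circ$ above a point $u \in U_F$ to be smooth; since $T_F$ is a torus, it follows that $X_F^\circ$ is smooth over $\bk$, and therefore $\init_w X^\circ \cong T_F \times X_F^\circ$ is smooth as well.

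For the direction $(2) \Rightarrow (1)$, apply \cite[Proposition~6.8]{LuxtonQu} to produce some tropical pair $(X^\circ, \P(\Sigma))$, and refine $\Sigma$ via Proposition~\ref{p:tropical} so that $\P(\Sigma)$ is smooth. The hypothesis gives smoothness of every $\init_w X^\circ \cong T_F \times X_F^\circ$, and hence of every stratum $X_F^\circ$. Since the fibers of $m$ over $U_F$ are again of the form $T_F \times X_F^\circ$, every fiber of $m$ is smooth; combined with flatness of $m$ and smoothness of $\P(\Sigma)$, the fiberwise criterion yields smoothness of $m$. Consequently $(X^\circ, \P(\Sigma))$ witnesses that $X^\circ$ is sch\"on.

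The main obstacle is marshalling the fiberwise criterion correctly: one must verify flatness of $m$ (packaged into the tropical pair hypothesis), smoothness of the target $\P(\Sigma)$ (arranged by unimodular refinement, which preserves the tropical pair property by Proposition~\ref{p:tropical}), and smoothness of every geometric fiber of $m$. A subsidiary point in the direction $(2) \Rightarrow (1)$ is handling the generic fiber of the family over $\Spec \O$; this reduces to flatness together with smoothness on a dense set of closed points, given the reducedness of $X^\circ$.
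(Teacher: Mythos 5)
The paper does not supply a proof of Proposition~\ref{prop:schon}; it simply cites Hacking's \cite[Lemma~2.7]{Hacking}. So I am reviewing your argument on its own terms rather than against an in-paper proof.

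The direction $(1) \Rightarrow (2)$ is correct: smoothness of $m$ forces each fiber $T_F \times X_F^\circ$ to be smooth, hence each $\init_w X^\circ$ is smooth. One minor point: the fiberwise criterion ``flat $+$ locally of finite presentation $+$ smooth fibers $\Rightarrow$ smooth'' does not actually require $\P(\Sigma)$ to be smooth, so the unimodular-refinement step is unnecessary; moreover, Proposition~\ref{p:tropical} only says tropical pairs are preserved under refinement---it does not itself produce a unimodular $\Sigma$ (that is Proposition~\ref{p:normalcrossings}).

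The direction $(2) \Rightarrow (1)$ has a real gap. You verify smoothness of the fibers of $m$ only over the orbits $U_F$ lying in the \emph{special} fiber of $\P(\Sigma)$; these are the fibers $T_F \times X_F^\circ$. But $m$ has additional fibers over the orbits $U_\tau$ of the \emph{generic} fiber $\P(\Delta)$, for cones $\tau$ of the recession fan, and these are of the form (torus) $\times\, (X \cap U_\tau)$, where $X = \X_\K$. In particular the fiber over a point of the open orbit $T_\K$ is a translate of $X^\circ$ itself. Your ``subsidiary point'' saying this ``reduces to flatness together with smoothness on a dense set of closed points, given the reducedness of $X^\circ$'' is not a valid argument: flat families with smooth special fiber can have singular generic fiber when the total space is not proper (e.g.\ $V(tx-1) \subset \A^1_\O$ is a closed subscheme entirely missing the special fiber), and reducedness of $X^\circ$ does not remedy this. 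The missing step is properness of $\X / \O$ together with torus equivariance: the non-smooth locus $Z$ of $m$ is stable under the $\T\times\T$-action on $\T\times_\O\X$ coming from left multiplication on the $\T$ factor and the diagonal ``compensating'' action, so $Z = \T \times_\O Z''$ for a closed $\T$-invariant $Z'' \subseteq \X$. Once you know $Z''$ misses the special fiber $\X_0$ (which is what your analysis of the $T_F \times X_F^\circ$ fibers actually establishes), properness of $\X \to \Spec\O$ forces $Z'' = \emptyset$, because a nonempty closed subscheme of a proper $\O$-scheme has closed image in $\Spec\O$ and hence must meet the closed point. Without this equivariance-plus-properness step the smoothness over the generic fiber is not established and the implication is incomplete.
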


The notions of tropical pairs and sch\"on varieties originated in the work of Tevelev \cite{TevComp} and were extended to the non-constant coefficient case by Luxton-Qu \cite{LuxtonQu}.  We can speak of a subvariety $X^\circ\subset (\bk^*)^n$ being part of a tropical pair and being sch\"on.  
Let $\P=\P(\Delta)$ be a toric variety, and let $X$ be the closure of $X^\circ$ in $\P$.  $(X^\circ,\P)$ is said to be tropical if $m:T_\bk\times X\rightarrow \P$ is faithfully flat and $X$ is proper.  If $m$ is smooth, then $X^\circ$ is said to be sch\"on.

The sch\"on condition is especially important when the toric scheme $\P$ is a semi-stable degeneration of toric varieties.  One can always refine a toric scheme so that we are in this case by the following proposition:

\begin{proposition} \label{p:normalcrossings} \cite[Proposition~2.3]{HelmKatz} Let $\Sigma$ be a rational polyhedral complex in $N_\R$.  There exists an integer $d$ and a subdivision $\Sigma'$ of $d\Sigma$ such the general fiber of $\P(\Sigma')$ is a smooth toric variety and the special fiber of $\P(\Sigma')$ is a divisor with simple normal crossings.  Moreover, if the generic fiber $\P(\Sigma)_\K$ is already smooth, $\Sigma'$ can be chosen to have the same recession fan as $\Sigma$.
%such that after base-changing $\P(\Sigma)$ to $\O[t^{\frac{1}{d}}]$, $\P(\Sigma)$ and $\P(\Sigma')$ have the same generic fibers.
\end{proposition}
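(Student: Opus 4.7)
The proof plan is to translate the two properties we need (smooth generic fiber, SNC special fiber) into combinatorial conditions on the fan $\tilde{\Sigma} \subset N_\R \times \R_{\geq 0}$, then apply a lattice rescaling followed by a relative unimodular refinement.

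First, I would unpack what needs to be achieved. Recall that $\P(\Sigma)$ is built from $\tilde{\Sigma}$, and the projection $\P(\tilde{\Sigma}) \to \A^1$ comes from projecting $N_\R \times \R_{\geq 0}$ onto its second factor. The generic fiber is $\P(\Delta)$, so \emph{the generic fiber is smooth iff the recession fan $\Delta$ is unimodular}. The special fiber is the pullback of the toric divisor $\{t=0\} \subset \A^1$; its irreducible components correspond to the rays of $\tilde{\Sigma}$ with strictly positive last coordinate, and the multiplicity of each such component equals the last coordinate of the primitive generator of the corresponding ray. Hence the special fiber is a simple normal crossings divisor iff (i) every cone of $\tilde{\Sigma}$ is unimodular (so $\P(\tilde{\Sigma})$ itself is smooth, which forces transversality of the components) and (ii) every ray of $\tilde{\Sigma}$ lying off the hyperplane $N_\R \times \{0\}$ has primitive generator with last coordinate equal to $1$. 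Since rays with positive last coordinate are precisely the rays through points $(v,1)$ for $v$ a vertex of $\Sigma$, condition (ii) is equivalent to all vertices of $\Sigma$ being lattice points of $N$.

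Next, I would achieve condition (ii) by a single rescaling. Choose $d \in \Z_{>0}$ to be a common denominator of the (finitely many) vertices of $\Sigma$, and replace $\Sigma$ by $d\Sigma$. Then every vertex of $d\Sigma$ lies in $N$, so $(v,1)$ is already primitive for each such vertex, and the multiplicity-one requirement holds. Note that rescaling does not affect the recession fan, so the same $\Delta$ is the recession fan of $d\Sigma$.

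It remains to refine $\tilde{(d\Sigma)}$ so that every cone becomes unimodular, without disturbing condition (ii). For this I would invoke the standard unimodular refinement of a rational fan (iterated star subdivision), with two caveats: every new ray must be chosen to be primitive, and whenever we subdivide a cone $\text{cone}(F \times \{1\})$ coming from $F \in d\Sigma$, we may choose the star subdivision ray to have last coordinate $1$ (take a primitive lattice point on the appropriate slice $N_\R \times \{1\}$). This preserves condition (ii) while achieving simpliciality and then unimodularity. The resulting refinement of $\tilde{(d\Sigma)}$ is of the form $\tilde{\Sigma'}$ for some polyhedral refinement $\Sigma'$ of $d\Sigma$, and $\P(\Sigma')$ has the desired properties.

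For the ``moreover'' assertion, assume $\Delta$ is already unimodular. The usual unimodular refinement algorithm can be performed relative to any unimodular subfan: one subdivides only those cones that fail to be unimodular, and every such cone lies strictly above the hyperplane $N_\R \times \{0\}$, hence is not in $\Delta$. Since $\Delta$ sits inside $\tilde{\Sigma}$ as the subfan $\tilde{\Sigma} \cap (N_\R \times \{0\})$ and is already unimodular, we never need to introduce a new ray inside $N_\R \times \{0\}$; consequently $\Delta$ is preserved, and the recession fan of $\Sigma'$ equals $\Delta$. The main obstacle in writing this out carefully is exactly this last point—ensuring that the star subdivisions used to unimodularize the ``non-recession'' cones never drop new rays into the recession hyperplane. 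This is handled by observing that each non-unimodular cone of $\tilde{\Sigma}$ has at least one generator with positive last coordinate, so one may always choose the subdividing ray to have positive last coordinate as well, leaving $\Delta$ untouched.
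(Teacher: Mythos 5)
The statement is not proved in this paper — it is quoted from \cite[Proposition~2.3]{HelmKatz}, and the authors' acknowledgments explicitly thank Kalle Karu ``for helping us to resolve a long standing technical difficulty concerning semi-stable reduction for toric varieties,'' which is exactly the difficulty your argument glosses over.

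Your translation of the two desiderata into combinatorial conditions on $\tilde{\Sigma}$ is correct, and the first rescaling (clearing vertex denominators) is the right first step. The gap is the next sentence: you claim that, after this one rescaling, one can always unimodularize $\widetilde{d\Sigma}$ by iterated star subdivisions whose new rays are all chosen at height $1$. This is false. The primitive lattice points at height $1$ inside a cone $\mathrm{cone}(F\times\{1\})$ are exactly the lattice points of $F$, so a refinement with new rays only at height $1$ is the same thing as a lattice triangulation of the polyhedral complex $d\Sigma$ into unimodular simplices (on the bounded part). Such a triangulation need not exist: the Reeve simplex with vertices $(0,0,0),(1,0,0),(0,1,0),(1,1,r)$ for $r\ge 2$ contains \emph{no} lattice points other than its four vertices, has normalized volume $r$, and therefore admits no unimodular lattice triangulation. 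Consequently the integer $d$ cannot in general be taken to be ``a common denominator of the vertices''; one must allow further scaling, and the fact that \emph{some} $d$ works is precisely the toroidal semi-stable reduction theorem of Kempf--Knudsen--Mumford--Saint-Donat. Your proof replaces this theorem by an assertion.

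The ``moreover'' part inherits the same problem in a sharper form. Once you invoke the correct tool (KKMS semi-stable reduction, iterating scaling and subdivision), you need a \emph{relative} version: a semi-stable refinement of $\tilde{\Sigma}$ that restricts to the identity on the already-unimodular subfan $\Delta = \tilde{\Sigma}\cap (N_\R\times\{0\})$. Your observation that non-unimodular cones have a generator off the recession hyperplane, so one \emph{may} take the subdividing ray off that hyperplane, is correct as far as it goes, but it does not establish that such choices can be made coherently across the whole fan while simultaneously keeping all new rays at height $1$; that is the genuinely delicate point flagged in the acknowledgments. A correct proof should cite (or reprove) KKMS together with the relative refinement, rather than reducing it to a routine star-subdivision argument.
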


The above proposition can be used to extend a sch\"on variety $X^\circ$ to a scheme $\X$ with simple normal crossings degeneration.  Let $(X^\circ,\P(\Sigma))$ be a tropical pair.  Choose $\P(\Sigma')$ as above.  After making a ramified base-change $\Spec \O[t^{\frac{1}{d}}]\rightarrow\Spec \O$, we have a smooth morphism $m:\T\times_\O\X\rightarrow \P(\Sigma')$ where $\X$ is the closure of $X^\circ$ in $\P(\Sigma')$.  Consequently, the generic fiber of $\X$ is smooth and the central fiber $X_0$ is a divisor in $\X$ with simple normal crossings.  In such a case, we call $(X^\circ,\P(\Sigma'))$ a {\em normal crossings pair}.  

\begin{lemma} \label{l:schonsmooth} If $X^\circ$ is sch\"on then it is smooth.  
\end{lemma}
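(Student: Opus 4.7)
The plan is to exploit the smoothness of the multiplication map $m$ directly, bypassing the initial degeneration characterization of Proposition~\ref{prop:schon}. By hypothesis, there is a tropical pair $(X^\circ, \P)$ such that $m: \T \times_\O \X \rightarrow \P$ is smooth. Smoothness is preserved under restriction to open subschemes of source and target and under passage to the generic fiber, so I would first cut down to the generic fiber $m_\K : T \times_\K X \rightarrow \P_\K = \P(\Delta)$, which remains smooth.

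Next, I would restrict to the dense torus $T \subseteq \P(\Delta)$. Since $X^\circ \subseteq T$, for any $(t,x) \in T \times_\K X^\circ$ the product $tx$ again lies in $T$, so $T \times_\K X^\circ$ is contained in $m_\K^{-1}(T)$, and the restricted map $T \times_\K X^\circ \rightarrow T$ is still smooth. The key trick is then to factor this restricted multiplication map as an isomorphism followed by a projection: the map
\[
\phi: T \times_\K X^\circ \longrightarrow T \times_\K X^\circ, \qquad (t,x) \mapsto (tx, x),
\]
is an isomorphism (with inverse $(y,x) \mapsto (yx^{-1}, x)$, using $x \in T$), and $m_\K = \pi_1 \circ \phi$ where $\pi_1$ is projection to the first factor.

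Consequently, $\pi_1 : T \times_\K X^\circ \rightarrow T$ is smooth. But $\pi_1$ is precisely the base change of the structure morphism $X^\circ \rightarrow \Spec \K$ along the faithfully flat morphism $T \rightarrow \Spec \K$. Since smoothness is fpqc-local on the base, it descends through a faithfully flat base change, and we conclude that $X^\circ \rightarrow \Spec \K$ is smooth.

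The only subtle point is verifying the descent step cleanly, but this is standard (see e.g.\ smoothness as a local property in the fpqc topology); the geometric content is entirely captured by the translation isomorphism $\phi$. The harder direction of the sch\"onness package (Proposition~\ref{prop:schon}, that pointwise smoothness of initial degenerations implies global smoothness of $m$) is not needed here, which is why this lemma admits such a direct argument.
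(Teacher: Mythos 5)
Your proof is correct, and it takes a genuinely different route from the paper. The paper's argument first invokes Proposition~\ref{p:normalcrossings} to replace $\P$ by a refinement whose generic fiber $\P_\K$ is a \emph{smooth} toric variety; it then observes that $m_\K : T \times X \rightarrow \P_\K$ is smooth, so $T \times X$ is smooth over $\K$ by composition with the smooth structure map of $\P_\K$, and finally that $X$ (hence the open subscheme $X^\circ$) is smooth because $T \times X \rightarrow X$ is smooth and surjective. Your argument bypasses the refinement step entirely: after restricting $m_\K$ to the dense torus $T \subseteq \P_\K$ (which requires no smoothness of $\P_\K$), the shear isomorphism $\phi(t,x) = (tx,x)$ exhibits the projection $\pi_1 : T \times_\K X^\circ \rightarrow T$ as smooth, and since $\pi_1$ is the base change of $X^\circ \rightarrow \Spec\K$ along the faithfully flat quasi-compact morphism $T \rightarrow \Spec\K$, smoothness descends. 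The trade-off is that the paper's version simultaneously proves that the closure $X$ in a smooth toric compactification is smooth, which it immediately reuses to get Lemma~\ref{l:schonclosure}; your more economical argument establishes only the stated claim about $X^\circ$. Both descent steps you and the paper appeal to are standard (descent of smoothness along fppf base change for you, and ascent of smoothness of the base under a smooth surjection for the paper), and in fact the shear trick you use is precisely the idea that underlies the general principle that homogeneous spaces over a field are smooth, so it is very much the natural move here.
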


\begin{proof}
Choose $\P=\P(\Sigma)$ as in Proposition \ref{p:normalcrossings}.  Then $m:\T\times_\O \X\rightarrow \P$ is smooth.  Restricting to the generic fiber, we have $m:T\times X \rightarrow \P_\K$ is smooth.  Since $\P_\K$ is smooth over $\K$, $T\times X$ is smooth.  It follows that $X$ and hence $X^\circ$ is smooth.
\end{proof}

The same argument gives the following lemma:

\begin{lemma} \label{l:schonclosure} Let $X^\circ\subset (\bk^*)^n$ be a sch\"on subvariety.  Let $\P(\Delta)$ be a smooth toric variety such that $(X^\circ,\P(\Delta))$ is a tropical pair.  Then $X$, the closure of $X^\circ$ in $\P(\Delta)$, is smooth.
\end{lemma}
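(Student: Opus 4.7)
My plan is to mimic the proof of Lemma \ref{l:schonsmooth} verbatim, with the simplification that we are already handed a smooth toric variety $\P(\Delta)$ and so no analog of Proposition \ref{p:normalcrossings} is needed. The authors have already set up the constant coefficient versions of the notions of tropical pair and sch\"on variety in the paragraph immediately preceding Proposition \ref{p:normalcrossings}, so it makes sense to apply the corresponding version of Proposition \ref{p:luxtonqu} directly.

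Concretely, I would first invoke the constant coefficient analog of Luxton--Qu (Proposition \ref{p:luxtonqu}): since $X^\circ \subset (\bk^*)^n$ is sch\"on and $(X^\circ, \P(\Delta))$ is a tropical pair, the multiplication map $m \colon T_\bk \times X \rightarrow \P(\Delta)$ is smooth. This is the constant coefficient counterpart of the first sentence of the proof of Lemma \ref{l:schonsmooth}. Then, since $\P(\Delta)$ is smooth over $\bk$ by hypothesis and $m$ is smooth, the source $T_\bk \times X$ is smooth over $\bk$ (composition of smooth morphisms). Finally, since $T_\bk$ is smooth and nonempty over $\bk$, smoothness of the product $T_\bk \times X$ forces smoothness of $X$: one can either argue via a closed fiber of the projection $T_\bk \times X \rightarrow T_\bk$, or note that $T_\bk \times X \rightarrow X$ is faithfully flat with smooth fibers, so smoothness descends to $X$.

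The only step that requires comment is the appeal to the constant coefficient version of Proposition \ref{p:luxtonqu}. If this is not taken as given, then I would justify it by base change to $\K' = \bk((t))$: setting $X_{\K'}^\circ = X^\circ \times_\bk \K'$, one checks that $\Trop(X_{\K'}^\circ) = \Trop(X^\circ)$ (this is essentially the definition used earlier in the section), that $X_{\K'}^\circ$ is sch\"on if and only if $X^\circ$ is, and that tropical pairs and smoothness of the multiplication map are both compatible with this base change. Then the original Proposition \ref{p:luxtonqu} applied to $X_{\K'}^\circ$ yields the desired smoothness after descent. I do not foresee a genuine obstacle here; the argument is essentially a formal translation, which is presumably why the authors say only that ``the same argument gives the following lemma.''
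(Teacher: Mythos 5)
Your proposal is correct and follows exactly the argument the authors intend when they write ``the same argument gives the following lemma'': invoke the (constant coefficient) Luxton--Qu result to get smoothness of $m\colon T_\bk\times X\to\P(\Delta)$, then deduce smoothness of $T_\bk\times X$ from smoothness of $\P(\Delta)$, and conclude smoothness of $X$. Your observation that the hypothesis of a smooth $\P(\Delta)$ lets you skip the normal-crossings refinement step, and your remark that the constant-coefficient Luxton--Qu can be reached by base change to $\bk((t))$ if one does not take it as given, are both accurate and match the conventions set up in the paragraph of Section~\ref{s:tropical} preceding Proposition~\ref{p:normalcrossings}.
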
 

\begin{lemma} \label{l:schonrestriction} If $X^\circ$ is sch\"on and $(X^\circ,\P)$ is a tropical pair, then each $X_F^\circ$ is sch\"on as a subscheme of $U_F$.
\end{lemma}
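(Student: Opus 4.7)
The strategy is to exhibit an explicit tropical pair $(X_F^\circ, \overline{U}_F)$ with smooth multiplication, where $\overline{U}_F \subseteq \P(\Sigma)_\bk$ denotes the closure of the torus orbit $U_F$. Recall that $\overline{U}_F$ is itself a toric variety for the quotient torus $U_F \cong \T_\bk/T_F$, whose fan is the star of $F$ in $\Sigma$. Exhibiting this pair directly, rather than checking all initial degenerations of $X_F^\circ$, avoids having to first compute $\Trop(X_F^\circ)$.

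Since $X^\circ$ is sch\"on, Proposition~\ref{p:luxtonqu} applied to the pair $(X^\circ, \P(\Sigma))$ gives that the multiplication map $m \colon \T \times_\O \X \to \P(\Sigma)$ is smooth. Because $\overline{U}_F$ is $\T_\bk$-invariant, base-changing $m$ along the closed immersion $\overline{U}_F \hookrightarrow \P(\Sigma)$ shows that the induced map
\[
m_F \colon \T_\bk \times_\bk X_F \longrightarrow \overline{U}_F
\]
is smooth and faithfully flat, provided one identifies the scheme-theoretic preimage $m^{-1}(\overline{U}_F)$ with $\T_\bk \times_\bk X_F$. This identification reduces to $\X_\bk \cap \overline{U}_F = X_F$, which in turn follows from the fact that sch\"onness forces every orbit stratum $X_G^\circ$ with $G \supseteq F$ to lie in the closure of $X_F^\circ$ (so there are no spurious components).

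The stabilizer $T_F \subset \T_\bk$ acts trivially on $\overline{U}_F$, and hence $m_F$ factors as
\[
\T_\bk \times X_F \xrightarrow{\,q\,} U_F \times X_F \xrightarrow{\,m'\,} \overline{U}_F,
\]
where $q$ is the quotient by the right $T_F$-action on the first factor---a principal $T_F$-bundle, hence smooth and faithfully flat---and $m'$ is the multiplication map of the candidate pair $(X_F^\circ, \overline{U}_F)$. Faithfully flat descent of smoothness (smoothness being fpqc-local on the source) then yields that $m'$ is smooth, and it is also surjective and therefore faithfully flat. Finally, $X_F$ is proper as a closed subscheme of the proper central fiber $X_0$, so $(X_F^\circ, \overline{U}_F)$ is a tropical pair with smooth multiplication, establishing that $X_F^\circ$ is sch\"on in $U_F$. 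The main (mild) obstacle is the identification in the previous step; once that is in place, the descent argument is routine.
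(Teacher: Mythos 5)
Your proof is correct and follows essentially the same route as the paper's: base-change the smooth multiplication map $m\colon \T\times_\O\X\to\P$ along the orbit closure $\overline{U}_F$ to get a smooth map $\T_\bk\times X_F\to\overline{U}_F$, then use the triviality of the $T_F$-action on $\overline{U}_F$ to descend smoothness through the quotient $\T_\bk\twoheadrightarrow U_F$. You supply two details the paper leaves implicit — the scheme-theoretic identification $\X\times_\P\overline{U}_F=X_F$ and the fpqc-descent step for smoothness of $m'$ — and both are handled correctly, though the first is really a feature of the tropical-compactification stratification rather than of sch\"onness specifically.
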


\begin{proof}
Let $V_F=\overline{U}_F$ be an orbit closure.  By base-changing $m:\T\times_\O\X\rightarrow \P$ to $V_F$, we obtain smooth $m:T_\bk\times X_F\rightarrow V_F$.  Now, the torus $T_F$ acts trivially on $V_F$ from which we obtain that
\[m:(T/T_F)_\bk\times X_F\rightarrow V_F\]
is smooth.
\end{proof}

Suppose $X^\circ$ is sch\"on.
Associated to a normal crossings %(resp. tropical) 
pair $(X^\circ,\P)$ is the parameterizing complex $\Gamma_{(X^\circ,\P)}$ which has appeared in the literature a number of times (notably, in the work of Speyer \cite{SpeCurves} in the case of curves and in the work of Hacking \cite{Hacking} in the constant coefficient case).  We use the definitions introduced in Section 4 of \cite{HelmKatz}.  There, the faces of $\Gamma_{(X^\circ,\P)}$ are pairs $(F,Y)$ where $F$ is a face of $\Sigma$ and $Y$ is an irreducible component of $X_F$.  Since $(X^\circ,\P)$ is a normal crossings pair, each $X_F$ is smooth by Lemma \ref{l:schonclosure}
%(resp. has at worst toric singularities) 
so irreducible components do not meet.  A face $(F',Y')$ is on the boundary of $(F,Y)$ if and only if $F'$ is a face of $F$ and $Y'$ is the unique component of $X_{F'}$ containing $Y$.  The complex $\Gamma_{(X^\circ,\P)}$ naturally maps to $\Sigma$ by the map $(F,Y)\mapsto F$.  The image of this map is $\Trop(X^\circ)$.  
%In the normal crossings case, 
The bounded faces of $\Gamma_{(X^\circ,\P)}$ form the dual complex of the semistable degeneration. %$X_0$.  
The parameterizing complex can also be thought of as the complex associated to the stratification given by components of $X_F$ for $F\in\Sigma$.
By \cite[Proposition~4.1]{HelmKatz}, the underlying topological space of $\Gamma_{(X^\circ,\P)}$ is independent of the choice of $\P$.  When $\P$ is understood, we will denote the parameterizing complex by $\Gamma_{X^\circ}$.

For a face $F\in\Sigma$, let  $\Star_\Sigma(F)$ be the star of $F$ in $\Sigma$.  It is a fan indexed by cells $G\in \Sigma$ that contain $F$.  Pick a point $w$ in the relative interior of $F$.  The cone $\overline{G}\in\Star_\Sigma(F)$ is the set of all $v\in N_\R$ such that $w+\epsilon v\in\Sigma$ for all sufficiently small $\epsilon>0$.  We define the {\em star quotient} to be $\Delta_F=\Star_\Sigma(F)/(N_F)_\R$.  If $w\in |\Sigma|$, let $F$ be the unique face containing $w$ in its relative interior.  Then  {\em the star quotient of $\Sigma$ at $w$} is defined to be $\Star_\Sigma(F)/(N_F)_\R$.

Tropicalization of initial degenerations of $X^\circ$ can be read from $\Trop(X^\circ)$:
\begin{lemma} \cite[Proposition~2.2.3]{SpeThesis} Let $(X^\circ,\P)$ be a tropical pair.  For $w\in |\Trop(X^\circ)|$, 
\[\Trop(\init_w X^\circ)=\Star_{\Trop(X^\circ)}(F)\]
 where $F$ is the unique face of $\Sigma$ containing $w$ in its relative interior.
\end{lemma}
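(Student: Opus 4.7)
The plan is to reduce the lemma to the defining condition for points of a tropical variety, namely the non-emptiness of initial degenerations. Both sides of the claimed equality should be expressed as the locus of directions $v$ for which an associated initial degeneration is non-empty, and then identified combinatorially.

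The technical heart is a two-step formula for iterated initial degenerations: for every $v\in N_\R\cap\Q^n$ and all sufficiently small rational $\varepsilon>0$,
\[
\init_v(\init_w X^\circ) \;=\; \init_{w+\varepsilon v}(X^\circ).
\]
I would prove this directly from the definition $\init_w X^\circ = \overline{t^{-w}X^\circ_{\K'}}\times_{\O'}\bk$. After a finite ramified base change so that $w,v\in N$, translating by $t^{-w}$ and taking the flat closure produces a family over $\O'$ whose special fiber is $\init_w X^\circ\subset T_\bk$. Translating this special fiber by the one-parameter subgroup $s\mapsto s^v$, taking the flat closure over $\bk[[s]]$, and restricting to $s=0$ computes $\init_v(\init_w X^\circ)$. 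On the other hand, translating $X^\circ$ directly by $t^{-(w+\varepsilon v)}$ and taking the flat closure computes $\init_{w+\varepsilon v}(X^\circ)$. A Gröbner/Newton polytope argument shows that, since only finitely many exponents appear in a set of defining equations, the weight function $\langle w+\varepsilon v,\,\cdot\,\rangle$ selects the same minimizing monomials as the two-step procedure once $\varepsilon$ is smaller than the gaps between $w$-values on those exponents. This is the main obstacle, and the argument must be phrased so as to commute correctly with the flat-closure construction and with the base change that makes $w+\varepsilon v$ rational.

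Granting the two-step formula, Step 2 is the chain of equivalences
\[
v\in\Trop(\init_w X^\circ)\iff\init_v(\init_w X^\circ)\neq\emptyset\iff\init_{w+\varepsilon v}(X^\circ)\neq\emptyset\ \text{for all small $\varepsilon>0$}\iff w+\varepsilon v\in\Trop(X^\circ)\ \text{for all small $\varepsilon>0$},
\]
where rationality of $v$ is obtained by density and both tropical varieties are closed, allowing us to replace arbitrary $v\in N_\R$ by rational $v$ in the definitions.

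For Step 3, since $(X^\circ,\P)$ is tropical we have $|\Sigma|=\Trop(X^\circ)$ by \cite[Proposition~6.5]{LuxtonQu}. Thus $w+\varepsilon v\in\Trop(X^\circ)$ for all sufficiently small $\varepsilon>0$ exactly says that $v$ lies in the support of $\Star_\Sigma(F)$, where $F$ is the unique face containing $w$ in its relative interior; this is the definition of the star recalled just before the lemma. Combining with Step 2 yields the asserted equality of sets. Taking closures (both sides are already closed, by definition of $\Trop$ and because $\Star_\Sigma(F)$ is a polyhedral fan) gives equality as polyhedral complexes supported on the same set.
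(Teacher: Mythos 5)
The paper gives no proof of this lemma; it is cited from Speyer's thesis (Proposition 2.2.3), and your argument is essentially the one given there. The iterated-initial-degeneration identity $\init_v(\init_w X^\circ)=\init_{w+\varepsilon v}X^\circ$ for small rational $\varepsilon>0$ is the correct technical heart, and combining it with the equivalence ``$v\in\Trop(\init_w X^\circ)$ iff $w+\varepsilon v\in|\Sigma|=\Trop(X^\circ)$ for all small $\varepsilon$'' and the definition of the star recalled just before the lemma yields the claim; the only detail worth flagging is that the Gr\"obner argument for the two-step formula must be carried out with a finite Gr\"obner basis valid in a neighborhood of $w$ (so that a single $\varepsilon$ works uniformly), but this is exactly what you indicate and what the cited proof does.
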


\section{The Tropical Motivic Nearby Fiber}\label{tmnf}

The goal of this section is to define the tropical motivic nearby fiber of a tropical variety, and show that it is independent of the choice of polyhedral structure on the tropical variety. 

We continue with the notation of Section~\ref{s:tropical}, and let $(X^\circ, \P)$ be  tropical pair. 
Let $\Sigma$ denote the corresponding polyhedral structure on $\Trop(X^\circ) \subseteq N_\R$, with recession fan $\triangle$. For each face $F$ of $\Sigma$, let $X^\circ_F$ denote the corresponding subvariety of the complex torus $U_F \cong (\C^*)^{\codim F}$ determined by $F$, and let $\tau_F$ denote the recession cone of $F$. 

The \emph{Grothendieck ring} $K_0(\Var_\C)$ of complex algebraic varieties is the free $\Z$-module generated by isomorphism classes $[V]$ of complex varieties $V$, modulo  the relation 
\begin{equation}\label{e:cutty}
[V] = [U] + [V \setminus U], 
\end{equation}
whenever $U$ is an open subvariety of $V$. Multiplication is defined by 
\[
[V] \cdot [W] = [V \times W], 
\]
and we set $\L := [\A^1]$. For example, we compute
\[
[(\C^*)^n] = [\C^*]^n = (\L - 1)^n.  
\]
%The topological Euler characteristic $e(V)$ of a variety $V$ satisfies \eqref{e:cutty}, and hence we have a ring homomorphism
%\[
%e: K_0(\Var_\C) \rightarrow \Z, \: \: e([V]) = e(V), 
%\]
%satisfying $e(\L) = 1$. 

\begin{definition}\label{d:tmnf}
The \define{tropical motivic nearby fiber} $\psi_\Tr  \in K_0(\Var_\C)$ of the tropical pair $(X^\circ, \P)$ is 
\[
\psi_\Tr =   \sum_{F \in \Sigma} \, [X^\circ_F] \, (1 - \L)^{\dim F - \dim \tau_F}. 
\] 
\end{definition}

\begin{remark}
Since $X_F^\circ$ may be non-reduced, we write $[X^\circ_F]$ to denote the element corresponding  to $X_F^\circ$ with its reduced structure in the Grothendieck ring. Observe that if $X^\circ$ is sch\"on, then 
$X_F^\circ$ is smooth, and hence reduced (see Proposition~\ref{prop:schon} and Remark~\ref{r:initial} below). 
\end{remark}

\begin{remark}\label{r:initial}
If $\In_w X^\circ$ denotes the initial degeneration of $X^\circ$ with respect to an element $w$ in the relative interior of $F$, then 
\[
\In_w X^\circ \cong X^\circ_F \times (\C^*)^{\dim F}. 
\]
Hence
\[
\psi_{\Tr} =  \sum_{F \in \Sigma} \frac{ (-1)^{\dim F} \, [\In_w X^\circ] }{ (1 - \L)^{\dim \tau_F}}. 
\]
%where $w$ is an element in the relative interior of $F$. 
\end{remark}

If $B$ is a finite poset, then
the M\"obius function $\mu_{B}: B \times B \rightarrow \mathbb{Z}$ (see Appendix~\ref{s:hdp})
satisfies the property (known as `M\"obius inversion') that for any function $h: B \rightarrow A$ to an abelian group $A$, 
\begin{equation}\label{inversion}
h(z) = \sum_{y \le z} \mu_{B}(y,z) g(y), \textrm{ where } g(y) = \sum_{x \leq y} h(x).
\end{equation}
In the lemma below, we regard the empty face of a polytope as having dimension $-1$, and write
 $\tau \in \Int(Q)$ if the relative interior of $\tau$ is contained in the relative interior of $Q$. 

\begin{lemma}\label{l:Mobius}
Let $P$ be a $d$-dimensional polytope and let $\sigma$ be a proper (possibly empty) face of $P$. If $\sigma$ is a face of a
 polyhedral decomposition $S$ of $P$, then, for any (possibly empty) face $\sigma'$ of $\sigma$,
\[
%\sum_{ \{ \tau \in S \mid \tau \cap \sigma = \sigma',  \tau \in \Int(P) \} }
\sum_{ \substack{  \tau \in S, \tau \in \Int(P)  \\  \tau \cap \sigma = \sigma' }} 
 (-1)^{\dim \tau} =   \left\{\begin{array}{cl} (-1)^d & \text{if } \sigma' =  \sigma  \\ 0 & \text{otherwise}. \end{array}\right.
\]
\end{lemma}

\begin{proof}
%If $\tau'$ is a face of $S$ of dimension strictly less than $d$, then 
Since the link of $\sigma'$ in $P$ is contractible, 
\[
\sum_{\tau \in S,  \sigma' \subsetneq \tau} (-1)^{\dim \tau - \dim \sigma' - 1} = 1, 
\]
and hence
\begin{equation}\label{e:zero}
\sum_{\tau \in S,  \sigma' \subseteq \tau} (-1)^{\dim \tau} = 0. 
\end{equation}
Let $B$ be the poset of all faces of $\sigma$, ordered by reverse inclusion, and consider the function $h: B \rightarrow \Z$ defined by 
\[
h(\sigma') = \sum_{\substack{\tau \in S \\ \tau \cap \sigma = \sigma' }} (-1)^{\dim \tau}.
\] 
With the notation of \eqref{inversion}, \eqref{e:zero} implies that $g(\sigma') = 0$ for all $\sigma' \in B$. 
Hence M\"obius inversion implies that $h(\sigma') = 0$. 

If $\tilde{B}$ denotes the poset of all (possibly empty) faces of $P$, ordered by  inclusion, 
then the  M\"obius function of $\tilde{B}$ is given by $\mu(Q', Q) = (-1)^{\dim Q - \dim Q'}$ whenever 
$Q' \subseteq Q$. 
Consider the function $h: \tilde{B} \rightarrow \Z$ defined by 
\[
h(Q) = \sum_{ \substack{  \tau \in S, \tau \in \Int(Q)  \\  \tau \cap \sigma = \sigma' }} 
 (-1)^{\dim \tau} .
\] 
With the notation of \eqref{inversion}, 
\[
g(Q) = \sum_{ \substack{  \tau \in S, \tau \subseteq Q  \\  \tau \cap \sigma = \sigma' }} 
 (-1)^{\dim \tau} .
\]
Note that the above sum is zero unless $\sigma' \subseteq Q$, in which case 
\[
g(Q) = \sum_{ \substack{  \tau \in S|_Q  \\  \tau \cap (\sigma \cap Q) = \sigma' }} 
 (-1)^{\dim \tau} .
\]
By the above discussion, the latter sum is zero unless $\sigma \cap Q = Q$, in which case $g(Q) = (-1)^{\dim \sigma'}$. M\"obius inversion then implies that 
\[
h(P) = (-1)^d \sum_{\sigma' \subseteq Q \subseteq \sigma} (-1)^{\dim Q - \dim \sigma'} 
=   \left\{\begin{array}{cl} (-1)^d & \text{if } \sigma' =  \sigma  \\ 0 & \text{otherwise}. \end{array}\right.
\]
\end{proof}

\begin{remark}
It would be interesting to have a geometric interpretation and proof of the lemma above involving Euler characteristics. 
\end{remark}

\begin{theorem}\label{t:independence}
The tropical motivic nearby fiber  $\psi_{\Tr} = \psi_{\Trr}$ of a tropical pair $(X^\circ, \P)$
is independent of the choice of polyhedral structure $\Sigma$ on $\Trop(X^\circ)$, and only depends on the corresponding recession fan $\triangle$.
\end{theorem}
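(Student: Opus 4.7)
The plan is to reduce the theorem to a local combinatorial identity attached to each face $F \in \Sigma$, and then to establish that identity by applying Lemma~\ref{l:Mobius} to the projective closure of $F$.

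First I would reduce to the refinement case. Given two polyhedral structures $\Sigma_1, \Sigma_2$ on $\Trop(X^\circ)$ sharing the recession fan $\triangle$, the intersection refinement $\Sigma_3 = \{F_1 \cap F_2 : F_i \in \Sigma_i\}$ is again a polyhedral structure with recession fan $\triangle$ (since intersections of cones of $\triangle$ are faces of $\triangle$), and by Proposition~\ref{p:tropical} it underlies a tropical pair. Thus one may assume $\Sigma'$ is a refinement of $\Sigma$ with the same recession fan, and partition $\Sigma' = \bigsqcup_{F \in \Sigma} \Sigma'_F$ with $\Sigma'_F = \{F' \in \Sigma' : \mathrm{rel\,int}(F') \subseteq \mathrm{rel\,int}(F)\}$. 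The theorem then reduces to the local identity
\[
\sum_{F' \in \Sigma'_F} [X^\circ_{F'}] \, (1-\L)^{\dim F' - \dim \tau_{F'}} = [X^\circ_F] \, (1-\L)^{\dim F - \dim \tau_F}
\]
for each $F \in \Sigma$.

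For $F' \in \Sigma'_F$ one has $T_{F'} \subseteq T_F$, and the initial degeneration $\init_w X^\circ$ is the same whether $w$ is taken in $\mathrm{rel\,int}(F')$ or $\mathrm{rel\,int}(F)$. The compatible identifications $\init_w X^\circ \cong X^\circ_F \times T_F \cong X^\circ_{F'} \times T_{F'}$ together with the splitting $T_F \cong (T_F/T_{F'}) \times T_{F'}$ yield an isomorphism $X^\circ_{F'} \cong X^\circ_F \times (T_F/T_{F'})$, hence $[X^\circ_{F'}] = [X^\circ_F](\L-1)^{\dim F - \dim F'}$ in $K_0(\Var_\C)$. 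Substituting and factoring out $[X^\circ_F]$, the local identity reduces to the purely combinatorial claim
\[
\sum_{F' \in \Sigma'_F} (-1)^{\dim F'} (1-\L)^{\dim \tau_F - \dim \tau_{F'}} = (-1)^{\dim F},
\]
and, upon grouping by $\rho := \tau_{F'}$ (a face of $\tau_F$), to showing that $\sum_{F' \in \Sigma'_F, \, \tau_{F'} = \rho} (-1)^{\dim F'}$ equals $(-1)^{\dim F}$ when $\rho = \tau_F$ and vanishes otherwise.

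To prove this last claim I would pass to the projective closure. Let $P$ denote the projective closure of $F$, a polytope of dimension $\dim F$ carrying a distinguished ``face at infinity'' $\sigma \subseteq P$ of dimension $\dim \tau_F - 1$ that projectively corresponds to $\tau_F$, and let $S$ be the subdivision of $P$ induced by $\Sigma'$. Since the recession fan is preserved under refinement, $\sigma$ is itself a face of $S$. The crucial geometric checks are that the projective closure map $F' \mapsto P'$ is a dimension-preserving bijection between $\Sigma'_F$ and the cells $\tau$ of $S$ with $\mathrm{rel\,int}(\tau) \subseteq \mathrm{rel\,int}(P)$, and that $P' \cap \sigma$ is precisely the face of $\sigma$ of dimension $\dim \tau_{F'} - 1$ corresponding to $\tau_{F'}$. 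With this dictionary in place, setting $\sigma' := P' \cap \sigma$, the desired identity is an immediate application of Lemma~\ref{l:Mobius}. The main obstacle I anticipate is the projective bookkeeping---in particular verifying that for unbounded $F'$ the relative interior of $P'$ still lies in $\mathrm{rel\,int}(P)$ (the ``infinity vertices'' of $P'$ belong to $\partial P'$ and hence are excluded from $\mathrm{rel\,int}(P')$)---and handling a possible lineality space of $F$ by quotienting it out at the outset.
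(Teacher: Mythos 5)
Your proposal is correct and follows essentially the same route as the paper's own proof: reduce to a refinement via the common refinement, use the isomorphism $\In_w X^\circ \cong X^\circ_F \times T_F \cong X^\circ_{F'} \times T_{F'}$ to get $[X^\circ_{F'}] = [X^\circ_F](\L-1)^{\dim F - \dim F'}$, factor out $[X^\circ_F]$, and then invoke Lemma~\ref{l:Mobius} on the projective closure of $F$ (the paper realizes this as $\sigma_F \cap H$, the cone over $F\times\{1\}$ sliced by an affine hyperplane, with $\sigma = \tau_F \cap H$ the face at infinity). The projective bookkeeping you flag as the ``main obstacle'' indeed works out exactly as you expect, so your proof is complete in substance.
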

%We claim that $\psi_{\Trop(X)}$ does not depend on the choice of polyhedral structure $\Sigma$ on $\Trop(X)$, but only depends on the recession fan $\triangle$.  
\begin{proof}
Suppose $\Sigma'$ is a polyhedral structure on $\Trop(X^\circ)$ induced by a tropical pair $(X^\circ, \P')$, and  with the same recession fan $\triangle$. After taking a common refinement, we may and will assume that $\Sigma'$ is a refinement of $\Sigma$. Consider the corresponding proper morphism of toric varieties 
$\P' \rightarrow \P$. If $F'$ is a face of $\Sigma'$, then the relative interior of $F'$ lies in the relative interior $\Int(F)$ of a unique face $F$ of $\Sigma$. By standard toric geometry (see, for example, \cite{FulIntroduction}),  the corresponding morphism of tori $U_{F'} \rightarrow U_F$ factors as 
\[
U_{F'} \cong U_F \times (\C^*)^{\dim F - \dim F'} \rightarrow U_F,
\]
where the second map is projection onto the first co-ordinate. By Proposition~\ref{p:tropical}, $X^\circ_{F'}$ is the pullback of $X^\circ_F$, and hence $[X^\circ_{F'}] = [X^\circ_F](\L - 1)^{\dim F - \dim F'}$. We compute 
\begin{align*}
&\psi_{\TR} =  \sum_{F' \in \Sigma'} \, [X^\circ_{F'}] \, (1 - \L)^{\dim F' - \dim \tau_{F'}} \\
 &=  \sum_{F \in \Sigma} [X^\circ_F]  (1 - \L)^{\dim F - \dim \tau_{F}}  \sum_{\substack{F' \in \Sigma' \\ \Int(F') \subseteq \Int(F) }} (-1)^{\dim F - \dim F'} (1 - \L)^{\dim \tau_{F} - \dim \tau_{F'}}.
\end{align*}
Hence, it is enough to show that
\[
 \sum_{\substack{\Int(F') \subseteq \Int(F) \\  \dim \tau_{F'} = n}} (-1)^{\dim F - \dim F'} (1 - \L)^{\dim \tau_{F} - \dim \tau_{F'}} =  \left\{\begin{array}{cl} 1 & \text{if } n = \dim \tau_F  \\ 0 & \text{otherwise}. \end{array}\right.
\]
This follows directly from Lemma~\ref{l:Mobius} if we set $P = \sigma_F \cap H$ to be the intersection of the cone $\sigma_F$ over 
$F \times 1$ in $N_\R \times \R$ with an affine hyperplane $H$, chosen such that $P$ is a polytope not containing the origin, if $S$ equals the polyhedral decomposition of $P$ induced by $\Sigma' \cap |\sigma_F|$, and if $\sigma = \tau_F \cap H$ equals the intersection of $P$ with $N_\R \times \{ 0 \}$. 

\end{proof}

\excise{
If $B$ is a finite poset, then
the M\"obius function $\mu_{B}: B \times B \rightarrow \mathbb{Z}$ is defined recursively by,
\[
\mu_{B}(x,y) =  \left\{ \begin{array}{ll}
1 & \textrm{  if  } x = y \\
0 & \textrm{  if  } x > y \\
- \sum_{x < z \leq y} \mu_{B}(z,y) = - \sum_{x \leq z < y} \mu_{B}(x,z) & \textrm{  if  } x < y, 
\end{array} \right.
\]
and satisfies the property (known as `M\"obius inversion') that for any function $h: B \rightarrow A$ to an abelian group $A$, 
\begin{equation}\label{inversion}
h(z) = \sum_{y \le z} \mu_{B}(y,z) g(y), \textrm{ where } g(y) = \sum_{x \leq y} h(x).
\end{equation}
In the lemma below, we regard the empty face of a polytope as having dimension $-1$, and write
 $\tau \in \Int(Q)$ if the relative interior of $\tau$ is contained in the relative interior of $Q$. 

\begin{lemma}\label{l:Mobius}
Let $P$ be a $d$-dimensional polytope and let $\sigma$ be a proper (possibly empty) face of $P$. If $\sigma$ is a face of a
 polyhedral decomposition $S$ of $P$, then, for any (possibly empty) face $\sigma'$ of $\sigma$,
\[
%\sum_{ \{ \tau \in S \mid \tau \cap \sigma = \sigma',  \tau \in \Int(P) \} }
\sum_{ \substack{  \tau \in S, \tau \in \Int(P)  \\  \tau \cap \sigma = \sigma' }} 
 (-1)^{\dim \tau} =   \left\{\begin{array}{cl} (-1)^d & \text{if } \sigma' =  \sigma  \\ 0 & \text{otherwise}. \end{array}\right.
\]
\end{lemma}

\begin{proof}
%If $\tau'$ is a face of $S$ of dimension strictly less than $d$, then 
Since the link of $\sigma'$ in $P$ is contractible, 
\[
\sum_{\tau \in S,  \sigma' \subsetneq \tau} (-1)^{\dim \tau - \dim \sigma' - 1} = 1, 
\]
and hence
\begin{equation}\label{e:zero}
\sum_{\tau \in S,  \sigma' \subseteq \tau} (-1)^{\dim \tau} = 0. 
\end{equation}
Let $B$ be the poset of all faces of $\sigma$, ordered by reverse inclusion, and consider the function $h: B \rightarrow \Z$ defined by 
\[
h(\sigma') = \sum_{\substack{\tau \in S \\ \tau \cap \sigma = \sigma' }} (-1)^{\dim \tau}.
\] 
With the notation of \eqref{inversion}, \eqref{e:zero} implies that $g(\sigma') = 0$ for all $\sigma' \in B$. 
Hence M\"obius inversion implies that $h(\sigma') = 0$. 

If $\tilde{B}$ denotes the poset of all (possibly empty) faces of $P$, ordered by  inclusion, 
then the  M\"obius function of $\tilde{B}$ is given by $\mu(Q', Q) = (-1)^{\dim Q - \dim Q'}$ whenever 
$Q' \subseteq Q$. 
Consider the function $h: \tilde{B} \rightarrow \Z$ defined by 
\[
h(Q) = \sum_{ \substack{  \tau \in S, \tau \in \Int(Q)  \\  \tau \cap \sigma = \sigma' }} 
 (-1)^{\dim \tau} .
\] 
With the notation of \eqref{inversion}, 
\[
g(Q) = \sum_{ \substack{  \tau \in S, \tau \subseteq Q  \\  \tau \cap \sigma = \sigma' }} 
 (-1)^{\dim \tau} .
\]
Note that the above sum is zero unless $\sigma' \subseteq Q$, in which case 
\[
g(Q) = \sum_{ \substack{  \tau \in S|_Q  \\  \tau \cap (\sigma \cap Q) = \sigma' }} 
 (-1)^{\dim \tau} .
\]
By the above discussion, the latter sum is zero unless $\sigma \cap Q = Q$, in which case $g(Q) = (-1)^{\dim \sigma'}$. M\"obius inversion then implies that 
\[
h(P) = (-1)^d \sum_{\sigma' \subseteq Q \subseteq \sigma} (-1)^{\dim Q - \dim \sigma'} 
=   \left\{\begin{array}{cl} (-1)^d & \text{if } \sigma' =  \sigma  \\ 0 & \text{otherwise}. \end{array}\right.
\]
\end{proof}

}

\section{The Motivic Nearby Fiber and Limit Hodge Structures}\label{s:limit}

The goal of this section is to recall some results on the motivic nearby fiber and limit mixed Hodge structure of  a degeneration of complex varieties. We refer the reader to \cite{BitMotivic}, \cite{PSHodge} and \cite{PSMixed} for details and proofs of the statements below. 

Recall from Section~\ref{tmnf} that the assignment of a complex variety $V$ to its class $[V]$ in the 
Grothendieck ring $K_0(\Var_\C)$ is the universal invariant of complex varieties satisfying the  relation \eqref{e:cutty}. We recall the following specialization of this invariant.

\excise{
 is the free $\Z$-module generated by isomorphism classes $[V]$ of complex varieties $V$, modulo  the relation 
\begin{equation}\label{e:cut}
[V] = [U] + [V \setminus U], 
\end{equation}
whenever $U$ is an open subvariety of $V$. Multiplication is defined by 
\[
[V] \cdot [W] = [V \times W], 
\]
and we set $\L := [\A^1]$. That is, the assignment $V \mapsto [V]$ is the universal invariant of complex varieties satisfying the  relation \eqref{e:cut}. We recall the following specialization of this invariant. 
}
%Firstly, the Euler characteristic $e(V)$ of a variety $V$ satisfies \eqref{e:cut}, and hence we have a ring homomorphism
%\[
%e: K_0(\Var_\C) \rightarrow \Z, \: \: e([V]) = e(V), 
%\]
%satisfying $e(\L) = 1$. 

%If $H_\C$ is a complex vector space, then a mixed Hodge structure on $H_\C$ consists of an 
 %increasing filtration $W_\bullet$ of $H_\C$ called the \define{weight filtration} and  a decreasing filtration $F^\bullet$ of $H_\C$ called the  \define{Hodge filtration}, such that the Hodge filtration induces a pure Hodge structure of weight $k$ on the $k^{\textrm{th}}$ graded piece $\Gr_k^W  H_\C$ of the weight filtration (see Section~\ref{ss:mixed}). The key point for our discussion is that we have an induced decomposition
 %\[
 %H_\C \cong \oplus_{p,q} H^{p,q}. 
 %\]
%where $\Gr_k^W  H_\C \cong \oplus_{p + q = k} H^{p,q}$ and $H^{q,p} = \overline{H^{p,q}}$. Throughout, we let $h^{p,q} = \dim H^{p,q}$. 
 In  \cite{DelTheory}, Deligne proved that the  $m^{\textrm{th}}$ cohomology group  with compact supports $H^m_c  (V)$ of
every $d$-dimensional complex algebraic variety  $V$ admits a canonical mixed Hodge structure.
That is, $H^m_c  (V)$ admits  an 
 increasing filtration $W_\bullet$ called the \define{weight filtration} and  a decreasing filtration $F^\bullet$  called the  \define{Hodge filtration}, such that the Hodge filtration induces a pure Hodge structure of weight $k$ on the $k^{\textrm{th}}$ graded piece %$\Gr_k^W  H_\C$ 
 of the weight filtration.  
We refer the reader to \cite{PSHodge} for an introduction to mixed Hodge structures.  
In particular, we have an induced decomposition
\[
H^m_c (V) \cong \bigoplus_{0 \le p,q \le d} H^{p,q}(H^m_c(V)), 
\] 
where $H^{q,p}(H^m_c(V; \C)) = \overline{H^{p,q}(H^m_c(V))}$. Throughout, we write 
$h^{p,q}(H^m_c(V)) = \dim  H^{p,q}(H^m_c(V))$. 
If 
\[
e^{p,q}(V) = e^{q,p}(V) = \sum_{m} (-1)^m h^{p,q}(H^m_c(V)),
\]
 then the \define{Hodge-Deligne polynomial} of $V$ is defined by
\[
E(V) = E(V; u,v) = \sum_{p,q} e^{p,q}(V) u^p v^q, 
\] 
and there is a well-defined ring homomorphism
\[
E: K_0(\Var_\C) \rightarrow \Z[u,v], \: \: E([V]) = E(V).  
\]

\begin{remark}
In fact, the ring homomorphism above factors through a specialization with values in the Grothendieck ring of complex Hodge structures $K_0( \mathfrak{hs}  )$, sending $[V]$ to $\sum_m (-1)^m H^m_c  (V)$
(see, for example, \cite[Corollary~3]{PSHodge}). 
\end{remark}

\begin{remark}\label{r:orbifold}
If $V$ is smooth and complete, %and has at worst orbifold singularities, 
then $H^m (V) = \oplus_{p + q = m} H^{p,q}(V)$ admits a pure Hodge structure of weight $m$, and $E(V) = \sum_{p,q} (-1)^{p + q} h^{p,q}(V) u^p v^q$. 
%In particular, the coefficient of $u^p$ in $E(V, -u, -1)$ equals the dimension of the $p^{\textrm{th}}$ graded piece of the Hodge filtration of 
%$H^* (V)$. 
\end{remark}

\begin{remark}
By the above remark, $E(\P^1) = uv + 1$, and hence $E(\L) = E(\P^1) - E(\pt) = uv$. 
\end{remark}

\begin{remark}\label{r:euler}
For any variety $V$, $E(V; 1, 1) = e(V)$ equals the topological Euler characteristic of $V$. 
\end{remark}

Let $Y$ be a smooth, connected, $(d + 1)$-dimensional (complex) manifold, and let $f: Y \rightarrow \D$ be a proper map to the unit disk $\D$, which is smooth over the punctured disk $\D^* = \D \setminus \{ 0 \}$. 
One can associate to this data the \define{motivic nearby fiber} $\psi_f$ in $K_0(\Var_\C)$, 
which is invariant under base change of $\D$, and satisfies the  property that if $Z$ is smooth and connected, and $\pi:Z \rightarrow Y$ is an isomorphism away from the central fiber $Y_0 = f^{-1}(0)$, then $\psi_f = \psi_{f \circ \pi}$
\cite[Remark~2.7]{BitMotivic}. If $f$ is a \define{semi-stable degeneration}, in the sense that 
the central fiber $Y_0$ is a reduced, simple normal crossing divisor, then the motivic nearby fiber has the following description. If $\{ D_i \}_{i \in \{ 1, \ldots, n \}}$ denotes the irreducible components of $Y_0$, and, for every non-empty subset  $I \subseteq \{ 1, \ldots ,  n \}$, we set $D_I^\circ = \cap_{i \in I} D_i \setminus \cup_{j \notin I} D_j$,  then 
\begin{equation}\label{e:mnf}
\psi_f = \sum_{I \subseteq \{ 1, \ldots ,  n \}} [D_I^\circ ] (1 - \mathbb{L})^{|I| - 1}.
\end{equation}

\begin{remark}\label{r:puncture}
It follows from the discussion above that the motivic nearby fiber $\psi_f$ is determined by the smooth, proper map $f : Y' = f^{-1}(\D^*) \rightarrow \D^*$ over the punctured disk. Conversely, any smooth, proper map  $f : Y'  \rightarrow \D^*$ can be extended to a map $f: Y \rightarrow \D$, where $Y$ is a connected manifold, i.e. by extending the family  to $\D$  and resolving singularities, and hence one may consider the corresponding invariant $\psi_f$. 
\end{remark}

%Let $X$ be a smooth, connected,  $(d + 1)$-dimensional (complex) variety, and let $f: X \rightarrow \A^1$ be a surjective morphism. Denef and Loeser associated to this data the \define{motivic nearby fiber} $\psi_f$ in $K_\circ(\Var_\C)$ \cite{DLGeometry}, 
%which satisfies the property that if $Y$ is smooth and connected, and $\pi:Y \rightarrow X$ is an isomorphism away from the central fiber $X_0 = f^{-1}(0)$, then $\psi_f = \psi_{f \circ \pi}$
%\cite[Remark~2.7]{BitMotivic}. If the central fiber $X_0$ is a reduced, simple normal crossing divisor with irreducible components $\{ D_i \}_{i \in \{ 1, \ldots, n \}}$, and, for every non-empty subset  $I \subseteq \{ 1, \ldots ,  n \}$, we set $D_I^\circ = \cap_{i \in I} D_i \setminus \cup_{j \notin I} D_j$, then 
%\[
%\psi_f = \sum_{I \subseteq \{ 1, \ldots ,  n \}} [D_I^\circ ] (1 - \mathbb{L})^{|I| - 1}.
%\]
%In fact, Bittner extended the definition of the motivic nearby fiber to the case when $X$ is a (possibly singular) connected variety \cite[Section~8]{BitMotivic}. 

%Let us assume that $f: X \rightarrow \A^1$ is proper morphism. By generic smoothness, there exists a disk $\D$ centered at the origin, such that the induced morphism over the punctured disk $\D^* = \D \smallsetminus \{ 0 \}$ is smooth. With a slight abuse of notation, we denote the morphism over the disk by 
%$f: X \rightarrow \D$.  

%A classical result of Schmid \cite{SchVariation} %(Theorem~\ref{t:lmhs}) 
%states that, 
For a fixed $t \in \D^*$, the $m^{\textrm{th}}$ cohomology group of the smooth, complete fiber $Y_t$, which we will often denote $Y_{\gen}$, admits 
%, denoted $H^m(X_\infty)$
 %$H^m(X_t)$ acquires 
 a mixed Hodge structure called the \define{limit mixed Hodge structure}, which only depends on the family over $\D^*$, and is invariant under base change. 
 The theory of limit mixed Hodge structures was developed by many authors including Deligne, Katz, %Landman~\cite{LanPicard}, 
Clemens~\cite{CleDegeneration}, Schmid~\cite{SchVariation} and
Steenbrink~\cite{SteLimits}. 
 Throughout, we let $H^m(Y_\infty)$ denote $H^m(Y_{\gen})$ with the limit mixed Hodge structure, and 
 write
  \[
 H^m (Y_\infty) \cong \bigoplus_{%0 \le 
 p, q \le \min\{m,d\}} H^{p,q} (H^m (Y_\infty)). 
 \]
 The corresponding Hodge and weight filtrations may roughly be described as follows, and we refer the reader to \cite[Chapter~7]{ZolMonodromy} for details. 
 Firstly, the usual Hodge filtrations $F^p H^m(Y_t)$ on the non-zero fibers have a limit, in an appropriate sense, when $t \rightarrow 0$, called the \define{limit Hodge filtration}.   In particular, 
 $\dim F^p H^m(Y_{\gen}) = \dim F^p H^m(Y_\infty)$. That is, 
 \begin{equation}\label{e:Hodge}
h^{p,m - p}(Y_{\gen}) = \sum_q h^{p,q}(H^m(Y_\infty)),
 \end{equation}
 and hence the 
 limit Hodge numbers $\{ h^{p,q}(H^m(Y_\infty)) \}$ refine the usual Hodge numbers of a general fiber. 
  Secondly, 
 it is a classical result of Ehresmann (see \cite{MKComplex} for a proof) that $f: Y \rightarrow \D$ is a locally trivial $C^{\infty}$-fibration over $\D^*$. % over the punctured disk.
 In particular, %if we fix $t \in \triangle^*$ and a non-negative integer $m$,  
 %then 
 we may consider the \define{monodromy transformation}
\[
T: H^m (Y_\infty) \rightarrow H^m (Y_\infty). 
\]
A classical result of Landman~\cite{LanPicard} states that $T$ is quasi-unipotent i.e. some multiple of $T$ is unipotent. Applying base change to $\D^*$ i.e. pulling back the family $Y' \rightarrow \D^*$ via the covering map $\D^* \rightarrow \D^*$, $t \mapsto t^k$, has the effect of replacing $T$ by $T^k$. Hence we may and will assume that $T$ is unipotent. In this case, Landman further showed that $(T - I)^{m + 1} = 0$. 
%, after possible base change, $T$ is unipotent and  $(T - I)^{m + 1} = 0$. %If
%with index of unipotency at most $m$ i.e. $(T - I)^{m + 1} = 0$. If 
%$N = \log T = \sum_{k \ge 1} (-1)^{k - 1} (T - I)^k/k$, then 
%$N$ and $T- I$ have the same kernels and $N^{k + 1} = 0$ if and only if $(T- I)^{k + 1} = 0$ (cf. Corollary~\ref{c:nilpotent}).
By standard linear algebra, the nilpotent operator $N  = \log T  = \sum_{k \ge 1} (-1)^{k - 1} (T - I)^k/k$ induces a weight filtration on $H^m (Y_\infty)$, called the \define{monodromy weight filtration}, which determines and is determined by the Jordan block decomposition of $N$.  
Indeed, we may inductively define a unique increasing filtration 
\[
0 \subseteq W_0 \subseteq W_1 \subseteq \cdots \subseteq W_{2m} = H^m (Y_\infty),
\]
with associated graded pieces $\Gr^W_{k} := W_k/W_{k - 1}$, satisfying the following properties for any non-negative integer $k$,
\begin{enumerate}
\item $N( W_k ) \subseteq W_{k - 2}$,
\item\label{e:hard} the induced map $N^k: \Gr^W_{m + k} \rightarrow \Gr^W_{m - k}$ is an isomorphism. 
\end{enumerate}
%Here $\Gr^W_{m}$ denotes the $m^{\textrm{th}}$ graded piece $W_m/W_{m - 1}$ of the filtration. 
For example,  $W_0 = \im N^m$ and $W_{2m - 1} = \ker N^m$. 
%Explicitly, we set $W_0 = \im N^m$ and $W_{2m - 1} = \ker N^m$, and, if $W_i$ has been defined for 
%$i < m - k$ and $i \ge m + k$, then there is an induced map $N^k: W_{2m}/W_{m - k - 1} \rightarrow W_{2m}/W_{m - k - 1}$, and 
%we define %$W_{m - k}$ and $W_{m + k}$ such that  
%$W_{m - k}/W_{m - k - 1} = \im N^k$ and 
%$W_{m + k - 1}/W_{m - k - 1} = \ker N^k$. 
 In fact, the map $N: H^m (Y_\infty) \rightarrow H^m (Y_\infty)$ is a morphism of mixed Hodge structures of type $(-1,-1)$.
 
 \begin{remark}\label{r:monodromy}
 %Using \eqref{e:Hodge}, 
 It follows that for $k \ge 0$, if
$H^{p,m- p} (Y_{\gen}) = 0$ for  all $p < \frac{m - k}{2}$, then $N^{k + 1} = 0$ \cite[Corollary~11.42]{PSMixed}. Indeed, by \eqref{e:Hodge} and the symmetry of the Hodge numbers of $Y_{\gen}$,
\[
H^{p,q}(H^m(Y_\infty)) = 0 \textrm{ unless }  \frac{m - k}{2} \le p \le  \frac{m + k}{2}. 
\]
Hence, either the source or the target of the induced map $N^{k + 1}: H^{p.q}(H^m(Y_\infty))  \rightarrow H^{p - k - 1.q - k - 1}(H^m(Y_\infty))$ is zero.
% of type $(-1,-1)$, and hence either the source or target of this map must be zero. 
\end{remark}
 
% If we write  
 %\[
% H^m (Y_\infty) \cong \bigoplus_{%0 \le 
 %p, q \le \min\{m,d\}} H^{p,q} (H^m (Y_\infty)),
 %\]
%then 
The \define{limit Hodge-Deligne polynomial} is defined to be
\[
E(Y_\infty) = E(Y_\infty;u,v) =  \sum_{p,q} e^{p,q}(Y_{\infty}) u^p v^q,
\]
where 
\[
e^{p,q}(Y_{\infty})  = e^{q,p}(Y_{\infty})  = \sum_m (-1)^m h^{p,q}(H^m(Y_{\infty})). 
\]
\begin{remark}\label{r:xycharactersitic}
It follows from \eqref{e:Hodge} that 
\begin{equation}\label{e:v=1}
E(Y_\infty;u,1) = E(Y_{\gen};u,1). 
\end{equation}
In particular, by Remark~\ref{r:euler}, $E(Y_\infty;1,1)$ equals the topological Euler characteristic of 
$Y_{\gen}$. 
\end{remark}

A deep result of Steenbrink \cite[Corollary~4.20]{SteLimits} gives a spectral sequence which converges to determine the limit mixed Hodge structure. In particular, Steenbrink's result implies that the motivic nearby fiber specializes to the limit Hodge-Deligne polynomial \cite[Corollary~11.26]{PSMixed}. That is, 
under the ring homomorphism
$E: K_0(\Var_\C) \rightarrow \Z[u,v]$, 
\begin{equation}\label{e:specialize}
E(\psi_f) = E(Y_\infty;u,v). 
\end{equation}

In particular, if the central fiber $Y_0$ is a reduced, simple normal crossing divisor with irreducible components $\{ D_i \}_{i \in \{ 1, \ldots, n \}}$, then we conclude that 
\[
E(Y_\infty) = \sum_{I \subseteq \{ 1, \ldots ,  n \}} E(D_I^\circ ) (1 - uv)^{|I| - 1}.
\]

\begin{remark}\label{r:simplicial}
With the notation above, 
let $S$ be  the  simplicial complex with $k$-dimensional faces indexed by $(k + 1)$-fold intersections of the components of the central fiber $Y_0 = \sum_{i = 1}^n D_i$, with associated topological space $|S|$. As explained in \cite{MorClemens}, the weight $0$ part of $H^m(Y_\infty)$ is described by 
\[
H^{0,0}(H^m(Y_\infty)) %\cong H^{0,0}(H^m(X_0))
\cong H^m(|S|),
\]
and $e^{0,0}(Y_\infty)$ is equal to the Euler characteristic of $|S|$. 
 \end{remark}
 
 \begin{remark}\label{r:genus}
As in \cite[Section~I.4]{CleDegeneration}, and with the notation above, %we have a complete description of $H^{d,0}(H^m(X_\infty))$ \cite[I.4]{CleDegeneration}. 
%More precisely, 
$H^{d,0}(H^m(Y_\infty)) = 0$ for $m \ne d$, and 
\[
H^{d,0}(H^d(Y_\infty)) \cong \bigoplus_{i = 1}^n H^{d,0}(D_i).
\]
  If $Z$ is a smooth, complete $d$-dimensional variety, then the \define{genus} of $Z$ is 
  $p_g(Z) = h^{d,0}(Z) = \dim H^0(Z, \Omega_Z^d)$. We conclude that 
\[
e^{d,0}(Y_\infty)   = (-1)^d \sum_{i = 1}^n p_g(D_i).
\] 
\end{remark}

\section{Applications for sch\"on subvarieties of tori}\label{s:schon}

The goal of this section is to give a geometric interpretation of the tropical motivic nearby fiber of a sch\"on subvariety of a torus, and deduce some Hodge-theoretic and topological consequences. 

We continue with the notation of Section~\ref{s:tropical}, and let $\O$ be  the ring of germs of analytic functions in $\C$ in a neighborhood of the origin. Let $X^\circ \subseteq T$ be a sch\"on subvariety, and let $(X^\circ, \P)$ be a tropical pair with corresponding polyhedral structure $\Sigma$ on $\Trop(X^\circ)$, and unimodular recession fan $\tri$. 
Let $\mathcal{X}$ denote the closure of $X^\circ$ in $\P$, with smooth generic fiber $X$. 

As in Section~\ref{tmnf}, we consider the tropical motivic nearby fiber $\psi_{(X^\circ, \tri)}$ in the Grothendieck ring  $K_0(\Var_\C)$ of complex varieties (cf. Theorem~\ref{t:independence}).  
On the other hand,  we may fix a disk $\D$ of sufficiently small radius centered at the origin in $\C$, and consider the generic fiber $X$ as a smooth, complex manifold $X_{\an}$ admitting a smooth, proper map $f: X_{\an} \rightarrow \D^*$ to the punctured disk $\D^* = \D \setminus \{ 0 \}$. Let $\psi_f \in K_0(\Var_\C)$ be the corresponding motivic nearby fiber $\psi_f$ (cf. Remark~\ref{r:puncture}), and let 
$X_{\gen}$ denote a fixed fiber of $f$.  Observe that the invariants $\psi_{(X^\circ, \tri)}$ and
$\psi_f$, as well as $X_{\gen}$, do not depend on $\P$, but only on the associated recession fan $\tri$. 

\begin{theorem}\label{t:Schon}
With the notation above, if $X^\circ \subseteq T$ is sch\"on and $\tri$ is a unimodular recession fan associated to the tropical variety $\Trop(X^\circ)$, then  $\psi_{(X^\circ, \tri)} = \psi_f$.
\end{theorem}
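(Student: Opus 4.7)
The plan is to reduce to the case of a semi-stable degeneration, where $\psi_f$ is computed by formula \eqref{e:mnf}, and then match the resulting sum term by term with $\psi_{(X^\circ,\tri)}$ expanded over the parameterizing complex $\Gamma$. First I invoke Theorem~\ref{t:independence} to replace $\Sigma$ by any polyhedral structure on $\Trop(X^\circ)$ with the same recession fan $\tri$. Applying Proposition~\ref{p:normalcrossings}, and using that $\tri$ is unimodular so that $\P(\tri)$ is already smooth (hence no ramified base change is needed and the recession fan is preserved), I refine $\Sigma$ to $\Sigma'$ so that $(X^\circ, \P(\Sigma'))$ is a normal crossings pair. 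The family $\X' \to \Spec\O$ then analytifies to a semi-stable degeneration over $\D$; by Lemma~\ref{l:schonclosure} its central fiber $X_0$ is a reduced SNC divisor whose irreducible components are precisely the vertices $(v, Y)$ of $\Gamma := \Gamma_{(X^\circ, \P(\Sigma'))}$. Thus \eqref{e:mnf} gives
\[
\psi_f = \sum_{I \neq \emptyset} [D_I^\circ] \, (1 - \L)^{|I| - 1}.
\]

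The heart of the argument is the identification of $D_I^\circ$ in terms of $\Gamma$. Using the toric stratification of $\P(\Sigma')$, the intersection $\bigcap_i \overline{U_{v_i}}$ of the orbit closures equals $\overline{U_{F^b(I)}}$, where $F^b(I) := \operatorname{conv}(v_1, \ldots, v_k)$ is the bounded simplex on the vertices of $I$ (when this is a face of $\Sigma'$; otherwise $D_I = \emptyset$). Tracking where a point of $\bigcap_i Y_i$ can lie, the condition of avoiding every extra divisor $D_j$ for $j \notin I$ excludes open strata $U_{F'}$ such that $F' \supsetneq F^b(I)$ has a vertex outside $V(I)$, but \emph{allows} faces $F'$ that extend $F^b(I)$ purely in a recession direction of $\Sigma'$. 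Combined with the disjointness of irreducible components of $X_F$ (again by Lemma~\ref{l:schonclosure}), this yields
\[
D_I^\circ = \bigsqcup_{(F, Y)} Y \cap U_F,
\]
where $(F, Y)$ ranges over faces of $\Gamma$ whose underlying polyhedron has vertex set exactly $V(I) = \{v_1, \ldots, v_k\}$ and whose component $Y$ is contained in each $Y_i$.

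To conclude, I reindex $\psi_f$ by $(F, Y) \in \Gamma$: such a face contributes to the term indexed by $I(F, Y) := \{(v, Y_v(Y)) : v \in V(F)\}$, where $Y_v(Y)$ is the unique component of $X_v$ containing $Y$. For any pointed polyhedron one has $\dim F = \dim F^b + \dim \tau_F$, so $|I(F, Y)| - 1 = \dim F^b = \dim F - \dim \tau_F$. Substituting,
\begin{align*}
\psi_f &= \sum_{(F, Y) \in \Gamma} [Y \cap U_F] \, (1 - \L)^{\dim F - \dim \tau_F} \\
&= \sum_{F \in \Sigma'} [X^\circ_F] \, (1 - \L)^{\dim F - \dim \tau_F} = \psi_{(X^\circ, \Sigma')} = \psi_{(X^\circ, \tri)}.
\end{align*}
The main obstacle I anticipate is the second step: verifying precisely that the contributions from unbounded faces of $\Sigma'$ extending $F^b(I)$ account for exactly the points of $D_I^\circ$ outside the open orbit of $F^b(I)$, with no double counting between faces of $\Gamma$ having the same bounded part.
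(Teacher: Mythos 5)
Your proposal follows essentially the same route as the paper's proof: refine to a normal crossings pair via Proposition~\ref{p:normalcrossings}, compute $\psi_f$ through formula \eqref{e:mnf}, and regroup the sum over $D_I^\circ$ according to the stratification $\{X_F^\circ\}_{F\in\Sigma'}$ of the central fiber, using that each point of $X_F^\circ$ lies in exactly $\dim F - \dim\tau_F + 1$ irreducible components of $X_0$. The paper states that count directly and concludes; you derive it by passing through the parameterizing complex $\Gamma$ and its bounded faces, which is a clean way to keep track of the bookkeeping and correctly handles the possibility that some $X_v$ is reducible (the paper's phrase ``indexed by the vertices of $\Sigma$'' glosses over this). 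One small inaccuracy to flag: the identity $\dim F = \dim F^b + \dim\tau_F$ is \emph{not} true for an arbitrary pointed polyhedron --- for instance, a planar region with three vertices and a one-dimensional recession cone, such as $\operatorname{conv}\{(0,0),(2,0),(0,2)\}+\R_{\ge 0}(1,1) \subset \R^2$, has $\dim F = 2$ but $\dim F^b + \dim\tau_F = 3$. It does hold when the cone $\tilde F \subset N_\R\times\R_{\ge 0}$ over $F$ is simplicial, i.e.\ when the number of vertices plus the number of recession rays equals $\dim F + 1$, which is exactly the situation for faces of $\Sigma'$ after the normal crossings refinement; since that is where you invoke it, the argument is unaffected, but the justification should be restricted to the simplicial case rather than stated for all pointed polyhedra.
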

\begin{proof}
By Proposition~\ref{p:normalcrossings} and its succeeding discussion, we may and will assume that $(X^\circ, \P)$ is a normal crossings pair. In particular, we may consider $\X$ as 
a smooth complex manifold $\mathcal{X}_{\an}$ admitting a proper map $f: \mathcal{X}_{\an} \rightarrow \D$, with reduced, simple normal crossings central fiber $X_0$. In this case, $\psi_f$ is computed via the formula \eqref{e:mnf}. That is, 
\[
\psi_f = \sum_{I \subseteq \{ 1, \ldots ,  n \}} [D_I^\circ ] (1 - \mathbb{L})^{|I| - 1}, 
\]
where $\{ D_i \}_{i \in \{ 1, \ldots, n \}}$ denotes the irreducible components of $X_0$, and, for every non-empty subset  $I \subseteq \{ 1, \ldots ,  n \}$, $D_I^\circ = \cap_{i \in I} D_i \setminus \cup_{j \notin I} D_j$. On the other hand, the irreducible components of $X_0$ are indexed by the vertices of $\Sigma$, and $X_0$ is the disjoint union of the locally closed subvarieties 
$\{ X_F^\circ \mid F \in \Sigma \}$. 
Moreover, $X_{F}^\circ$ is contained in precisely $\dim F - \dim \tau_{F} + 1$ of the irreducible components of $X_0$. The result now follows from Definition~\ref{d:tmnf}. 
\end{proof}

The following corollary is immediate from the discussion of Section~\ref{s:limit}. Let $E(X_\infty)$ denote the limit Hodge-Deligne polynomial associated to $f: X_{\an} \rightarrow \D^*$.
% with fixed non-zero fiber $X_{\gen}$.

\begin{corollary}\label{c:topological}
With the notation above, if $X^\circ \subseteq T$ is sch\"on and $\tri$ is a unimodular recession fan associated to the tropical variety $\Trop(X^\circ)$, then
\[
E(X_\infty) =  \sum_{F \in \Sigma} \, E(X_{F}^\circ) \, (1 - uv)^{\dim F - \dim \tau_F}. 
\] 
In particular, 
\[
E(X_{\gen}; u, 1) =  \sum_{F \in \Sigma} \, E(X_{F}^\circ; u, 1) \, (1 - u)^{\dim F - \dim \tau_F},  
\]
and the topological Euler characteristic of $X_{\gen}$ is given by
\[
e(X_{\gen}) = \sum_{\substack{F \in \Sigma \\ \dim F = \dim \tau_F}} \, e(X_{F}^\circ). 
\]
\end{corollary}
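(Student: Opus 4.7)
The plan is to deduce all three identities from Theorem~\ref{t:Schon} by applying the ring homomorphism $E : K_0(\Var_\C) \to \Z[u,v]$ to the equality $\psi_{(X^\circ,\tri)} = \psi_f$ and then specializing the variables $u,v$. The whole argument is essentially bookkeeping: the hard work has already been done in Theorem~\ref{t:Schon} and in the limit-Hodge-theoretic results recalled in Section~\ref{s:limit}.

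First I would apply $E$ to the left-hand side. By the specialization formula \eqref{e:specialize} that comes from Steenbrink's spectral sequence, $E(\psi_f) = E(X_\infty; u,v)$. Next I would apply $E$ to the right-hand side, namely to the expression for $\psi_{(X^\circ,\tri)}$ in Definition~\ref{d:tmnf}. Because $E$ is a ring homomorphism and $E(\L) = uv$, we get
\[
E(\psi_{(X^\circ,\tri)}) = \sum_{F \in \Sigma} E(X^\circ_F)\,(1-uv)^{\dim F - \dim \tau_F},
\]
which gives the first claimed identity.

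For the second identity, I would substitute $v=1$ in the equation just obtained. By Remark~\ref{r:xycharactersitic}, the substitution $v=1$ in the limit Hodge-Deligne polynomial recovers the ordinary polynomial of the general fiber, i.e.\ $E(X_\infty; u,1) = E(X_{\gen}; u, 1)$, which immediately yields the second formula.

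Finally, for the Euler characteristic, I would further set $u=1$, using Remark~\ref{r:euler} that $E(V;1,1) = e(V)$ for any complex variety. The key observation, and the only mildly nontrivial step, is that the factor $(1-u)^{\dim F - \dim \tau_F}$ vanishes at $u=1$ whenever $\dim F > \dim \tau_F$; thus all summands corresponding to faces that are strictly larger than their recession cone die, and only those $F$ with $\dim F = \dim \tau_F$ (which forces the exponent to be zero) survive. This produces exactly the claimed sum
\[
e(X_{\gen}) = \sum_{\substack{F \in \Sigma \\ \dim F = \dim \tau_F}} e(X^\circ_F).
\]
No genuine obstacle appears; the main thing to check carefully is simply that the specialization kills the non-top-dimensional contributions in the last step.
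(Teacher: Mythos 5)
Your proof is correct and follows essentially the same route as the paper: apply the ring homomorphism $E$ to both sides of Theorem~\ref{t:Schon}, invoke \eqref{e:specialize} and $E(\L)=uv$ for the first identity, then use Remark~\ref{r:xycharactersitic} (and the vanishing of $(1-u)^{\dim F - \dim\tau_F}$ at $u=1$ when the exponent is positive) for the specializations. Nothing further is needed.
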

\begin{proof}
The first statement follows from Theorem~\ref{t:Schon} and the fact that the motivic nearby fiber specializes to the limit Hodge-Deligne polynomial \eqref{e:specialize}. The second and third statements follow from Remark~\ref{r:xycharactersitic}.  
\end{proof}

Recall from Section~\ref{s:tropical} that $\Gamma_{X^\circ}$ denotes the parametrizing complex
of $X^\circ$. 
%if $(X^\circ, \P)$ is a normal crossings pair, then denotes the corresponding parametrizing complex.
%corresponding to $(X^\circ, \P)$. 
It follows from Remark~\ref{r:simplicial} that the weight $0$ part of $H^m(X_\infty)$ is described by 
\begin{equation}\label{e:weight0}
H^{0,0}(H^m(X_\infty))
\cong H^m(|\Gamma_{X^\circ}|).
\end{equation}
%In particular, the homology groups of $|\Gamma|$ are independent of the choice of polyhedral structure
%$\Sigma$. 
Recall from Section~\ref{s:limit} that we have an isomorphism 
\[
N^m: \Gr^W_{2m} H^m(X_\infty) \rightarrow \Gr^W_{0} H^m(X_\infty).
\]
 Hence, the $m^{\textrm{th}}$ Betti number $b_m(\Gamma_{X^\circ})$ of $|\Gamma_{X^\circ}|$ satisfies
\[
b_m(\Gamma_{X^\circ}) = h^{0,0}(H^m(X_\infty)) \le h^{i,i}(H^m(X_\infty)) \textrm{ for } 0 \le i \le m.  
\]
In particular, by \eqref{e:Hodge}, we immediately obtain the following corollary. 

\begin{corollary}\label{c:Betti}
With the notation above, if $X^\circ \subseteq T$ is sch\"on and $\tri$ is a unimodular recession fan associated to the tropical variety $\Trop(X^\circ)$, then the parametrizing complex $\Gamma$ satisfies 
\[
b_m(\Gamma_{X^\circ}) \le \min_{p + q = m} h^{p,q}(X_{\gen}). 
\]
\end{corollary}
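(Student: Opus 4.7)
The plan is to assemble three ingredients, each of which has been set up earlier in the paper. First, I would invoke Theorem~\ref{t:Schon} together with the normal-crossings reduction of Proposition~\ref{p:normalcrossings} so that $f\colon X_{\an}\to\D$ is a semi-stable degeneration whose central fiber has bounded dual complex identified with $\Gamma_{X^\circ}$. Remark~\ref{r:simplicial}, applied to this degeneration, then identifies the weight-zero piece of the limit mixed Hodge structure with the singular cohomology of $|\Gamma_{X^\circ}|$, giving
\[
b_m(\Gamma_{X^\circ}) = \dim H^m(|\Gamma_{X^\circ}|) = h^{0,0}(H^m(X_\infty)).
\]

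Next, I would propagate this equality into an inequality for all diagonal Hodge numbers of the limit mixed Hodge structure using the monodromy logarithm $N$. Since $N$ is a morphism of mixed Hodge structures of type $(-1,-1)$, and property~\eqref{e:hard} asserts that $N^m\colon \Gr^W_{2m}H^m(X_\infty)\to\Gr^W_0 H^m(X_\infty)$ is an isomorphism, the factorization
\[
\Gr^W_{2m}H^m(X_\infty) \xrightarrow{N^{m-i}} \Gr^W_{2i}H^m(X_\infty) \xrightarrow{N^{i}} \Gr^W_0 H^m(X_\infty)
\]
forces $N^i\colon \Gr^W_{2i}H^m(X_\infty)\to \Gr^W_0 H^m(X_\infty)$ to be surjective for every $0\le i\le m$. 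Because $N^i$ shifts Hodge bidegrees by $(-i,-i)$ and $\Gr^W_0 H^m(X_\infty)$ is pure of bitype $(0,0)$, the only Hodge component of $\Gr^W_{2i}H^m(X_\infty)$ that can contribute is $H^{i,i}$; hence $N^i$ restricts to a surjection $H^{i,i}(H^m(X_\infty))\twoheadrightarrow H^{0,0}(H^m(X_\infty))$, so $h^{0,0}(H^m(X_\infty))\le h^{i,i}(H^m(X_\infty))$ for every $0\le i\le m$.

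Finally, I would apply \eqref{e:Hodge}, which refines the ordinary Hodge numbers of the general fiber as $h^{i,m-i}(X_{\gen}) = \sum_q h^{i,q}(H^m(X_\infty))$, giving in particular $h^{i,i}(H^m(X_\infty))\le h^{i,m-i}(X_{\gen})$. Chaining these two inequalities yields $b_m(\Gamma_{X^\circ})\le h^{i,m-i}(X_{\gen})$ for every $0\le i\le m$; since $X_{\gen}$ is a compact K\"ahler manifold its Hodge numbers satisfy $h^{p,q}=h^{q,p}$, so the minimum over such $i$ coincides with $\min_{p+q=m}h^{p,q}(X_{\gen})$, which is the stated bound.

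The main delicate step is the middle one: one must check that the surjectivity of $N^i$ on the graded pieces of the monodromy weight filtration actually restricts to a surjection on Hodge bigraded pieces $H^{i,i}\to H^{0,0}$. This is forced by the $(-1,-1)$ type of $N$ together with the purity of $\Gr^W_0 H^m(X_\infty)$, and so requires nothing beyond standard mixed Hodge structure bookkeeping; the remaining two steps are essentially direct invocations of Remark~\ref{r:simplicial} and \eqref{e:Hodge}.
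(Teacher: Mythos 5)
Your proof is correct and follows essentially the same route as the paper: identify $b_m(\Gamma_{X^\circ})$ with $h^{0,0}(H^m(X_\infty))$ via the dual complex of the semi-stable reduction, use the isomorphism $N^m\colon\Gr^W_{2m}\to\Gr^W_0$ together with the $(-1,-1)$ type of $N$ to bound $h^{0,0}\le h^{i,i}$, and then apply~\eqref{e:Hodge}. The only difference is that you spell out the middle step (factoring $N^m=N^i\circ N^{m-i}$ to get surjectivity of $N^i$ on graded pieces and then restricting to the $(i,i)$ Hodge component) in more detail than the paper, which simply asserts the inequality; that extra bookkeeping is accurate and worth recording.
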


This strengthens the result of Helm and the first named author \cite[Corollary~5.8]{HelmKatz} that 
\[
b_m(\Gamma_{X^\circ}) \le \frac{b_m(X_{\gen})}{m + 1}. 
\]

%Finally, let $\ver(\Sigma)$ denote the set of vertices of $\Sigma$. For each $v \in \ver(\Sigma)$, it follows from the results of Section~\ref{s:tropical} that $X_v$ has toroidal singularities, and hence is a disjoint union of its irreducible components. 

Recall that if $V$ is a $d$-dimensional, pure-dimensional,  smooth, complete variety, then the \emph{genus} of $V$ is $p_g(V) = h^{d, 0}(V)$.
% If $V$ is not connected, then $p_g(V)$ is the sum of the genera of its irreducible components.  
Let $\ver(\Sigma)$ denote the set of vertices of $\Sigma$. 

\begin{corollary}\label{c:genus}
With the notation above, if $X^\circ \subseteq T$ is sch\"on and $\tri$ is a unimodular recession fan associated to the tropical variety $\Trop(X^\circ)$, and if $X_v$ is smooth for each $v \in \ver(\Sigma)$, then 
$H^{d,0}(H^m(X_\infty)) = 0$ for $m \ne d$, and 
\[
h^{d,0}(H^d(X_\infty)) = \sum_{ v \in \ver(\Sigma) } p_g(X_v).
\]
In particular, if $\Gamma_{X^\circ}$ denotes the parametrizing complex of $X^\circ$, then 
\[
b_d(\Gamma_{X^\circ}) + \sum_{ v \in \ver(\Sigma) } p_g(X_v) \le p_g(X_{\gen}). 
\]
\end{corollary}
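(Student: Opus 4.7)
The plan is to apply Remark \ref{r:genus} to a semi-stable model of the degeneration, combined with the hard Lefschetz-type monodromy isomorphism from Section \ref{s:limit}; the crucial identification is that by \eqref{e:weight0}, the $(0,0)$ Hodge piece of $H^d(X_\infty)$ computes $b_d(\Gamma_{X^\circ})$.

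First, as in the proof of Theorem \ref{t:Schon}, I would use Proposition \ref{p:normalcrossings} together with the unimodularity of $\tri$ to refine $\Sigma$ to a normal crossings pair with the same recession fan, so that the associated analytic family $f\colon\mathcal{X}_{\an}\to\D$ is a semi-stable degeneration. The irreducible components of the central fiber $X_0$ are the $X_v$ for $v\in\ver(\Sigma)$, and by the smoothness hypothesis each is a smooth complete variety of dimension $d$.

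Second, applying Remark \ref{r:genus} directly to this semi-stable degeneration yields the first two conclusions: $H^{d,0}(H^m(X_\infty))=0$ for $m\ne d$ and
\[
H^{d,0}(H^d(X_\infty)) \;\cong\; \bigoplus_{v\in\ver(\Sigma)} H^{d,0}(X_v),
\]
so that $h^{d,0}(H^d(X_\infty)) = \sum_{v\in\ver(\Sigma)} p_g(X_v)$.

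Third, for the Betti inequality I would apply property \eqref{e:hard} with $m=k=d$: the induced map $N^d\colon \Gr^W_{2d}H^d(X_\infty)\xrightarrow{\sim} \Gr^W_{0}H^d(X_\infty)$ is an isomorphism of mixed Hodge structures of type $(-d,-d)$. By \eqref{e:weight0}, the target is pure of Hodge type $(0,0)$ with dimension $b_d(\Gamma_{X^\circ})$; consequently the source is pure of type $(d,d)$ with the same dimension, so $h^{d,d}(H^d(X_\infty))=b_d(\Gamma_{X^\circ})$. Combining with \eqref{e:Hodge} and the non-negativity of the remaining Hodge numbers,
\[
p_g(X_{\gen}) = h^{d,0}(X_{\gen}) = \sum_{q=0}^{d} h^{d,q}(H^d(X_\infty)) \;\ge\; h^{d,0}(H^d(X_\infty)) + h^{d,d}(H^d(X_\infty)) = \sum_{v\in\ver(\Sigma)} p_g(X_v) + b_d(\Gamma_{X^\circ}),
\]
where for $d\ge 1$ the two highlighted summands correspond to distinct values of $q$; the case $d=0$ is trivial.

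The main obstacle I expect is the bookkeeping in the semi-stable reduction step: if the chosen normal crossings refinement introduces new vertices (hence new irreducible components of the central fiber) not present in $\ver(\Sigma)$, one must verify that such components are ruled over lower-dimensional sch\"on strata and therefore contribute nothing to the genus sum in Remark \ref{r:genus}. The smoothness hypothesis on each $X_v$ is what permits one to choose the refinement to be an isomorphism in a neighborhood of each vertex stratum, reducing the issue to a local toric calculation on the higher-codimensional faces of $\Sigma$.
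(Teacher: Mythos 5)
Your plan for the second statement (the Betti inequality) is exactly the paper's: both use the isomorphism \eqref{e:hard} with $m = k = d$ to identify $h^{d,d}(H^d(X_\infty))$ with $h^{0,0}(H^d(X_\infty)) = b_d(\Gamma_{X^\circ})$ via \eqref{e:weight0}, and then apply \eqref{e:Hodge} at $p = d$. For the first statement, however, you take a genuinely different route. The paper works entirely with the \emph{original} $\Sigma$: it extracts $e^{d,0}(X_\infty) = \sum_{v\in\ver(\Sigma)} e^{d,0}(X_v^\circ)$ from Theorem~\ref{t:Schon} (only vertices contribute to the $u^dv^0$ coefficient, by a degree count, since $\dim X_F^\circ = d - \dim F$), then uses additivity and the smoothness of $X_v$ via Remark~\ref{r:orbifold} to get $e^{d,0}(X_v^\circ) = e^{d,0}(X_v) = (-1)^d p_g(X_v)$, and finally invokes Remark~\ref{r:genus} only for the qualitative vanishing $H^{d,0}(H^m(X_\infty))=0$, $m\ne d$. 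All $\Sigma$-independence is already encapsulated in Theorem~\ref{t:Schon}, so no bookkeeping arises.

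You instead pass to a normal crossings refinement $\Sigma'$ and apply Remark~\ref{r:genus} directly, which produces $\sum_{v'\in\ver(\Sigma')} p_g(X_{v'})$; you correctly flag the identification of this sum with $\sum_{v\in\ver(\Sigma)} p_g(X_v)$ as the crux. Your proposed resolution, however, is not the right mechanism. Choosing the refinement ``to be an isomorphism in a neighborhood of each vertex stratum'' is not what the smoothness of $X_v$ gives you and is not generally available: Proposition~\ref{p:normalcrossings} must make the ambient toric scheme $\P(\Sigma')$ semi-stable, which is a global constraint on $\Sigma'$ and can force subdivisions and rescaling that do touch neighborhoods of vertices. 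The correct way to close the gap is birational invariance of $p_g$: (i) for $v'$ sitting over an old vertex $v$, Proposition~\ref{p:tropical} gives a proper morphism $\X'\to\X$ restricting to an isomorphism on $X_v^\circ$, so $X_{v'}\to X_v$ is a proper birational map of smooth projective $d$-folds (by Lemma~\ref{l:schonclosure} and your hypothesis), hence $p_g(X_{v'})=p_g(X_v)$; (ii) for a new vertex $v'$ interior to a positive-dimensional face $F\in\Sigma$, the proof of Theorem~\ref{t:independence} shows $X_{v'}^\circ\cong X_F^\circ\times(\C^*)^{\dim F}$, so $X_{v'}$ is birational to $\tilde X_F\times(\P^1)^{\dim F}$ for a smooth compactification $\tilde X_F$ of dimension $d-\dim F<d$, and K\"unneth gives $h^{d,0}=0$. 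With this substitution your argument is correct; what you lose relative to the paper's route is precisely this extra case analysis, which Theorem~\ref{t:Schon} lets one avoid.
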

\begin{proof}
By Theorem~\ref{t:Schon},
\[
e^{d,0}(X_\infty) = \sum_{v \in \ver(\Sigma)} e^{d,0}(X_v^\circ). 
\]
On the other hand, $e^{d,0}(X_v^\circ) = e^{d,0}(X_v) = (-1)^{d} p_g(X_v)$ by the additivity property of Hodge-Deligne polynomial, the fact that the dimension of $X_v \setminus X_v^\circ$ is strictly less than $d$, and Remark~\ref{r:orbifold}. On the other hand, by Remark~\ref{r:genus},
$H^{d,0}(H^m(X_\infty)) = 0$ for $m \ne d$, and hence
\[
e^{d,0}(X_\infty) = (-1)^d h^{d,0}(H^d(X_\infty)).
\]
The first statement is now immediate, while the second statement follows from \eqref{e:weight0}, together with the following inequality which is an immediate consequence of 
\eqref{e:Hodge},
\[
h^{0,0}(H^d(X_\infty)) + h^{d,0}(H^d(X_\infty))  \le p_g(X_{\gen}). 
\]
\end{proof}

\begin{remark}
The assumption that  $X_v$ is smooth for each $v \in \ver(\Sigma)$ in the above corollary is satisfied if 
$(X^\circ, \P)$ is a normal crossings pair (see Section~\ref{s:tropical}, cf. Remark~\ref{r:genus}).
%, in which case the result is classical.
\end{remark}

%%%%%%%%%%%%%%%%%%%%%%%%%%%%%%%%%%%%%%%%%%%%%%

\begin{example}[Curves]
Consider the case when $X^\circ$ is $1$-dimensional, and let 
$C \rightarrow \D^*$ denote the corresponding  smooth family of curves with non-zero fiber $C_{\gen}$, and parametrizing complex $\Gamma$. After possibly scaling and refining $\Trop(X^\circ)$, we can assume that $X_{\an}$ is a smooth manifold.
Moreover, $C_v$ is smooth for each $v \in \ver(\Sigma)$.
Then the zeroth and top cohomology groups are trivial 
\[
b_0(C_{\gen}) = b_2(C_{\gen}) = 1.
\]
The limit mixed Hodge numbers of the middle cohomology are given by
\[
h^{0,0}(H^1(C_\infty)) =  h^{1,1}(H^1(C_\infty)) = b_1(\Gamma), 
\]
%where $g(\Gamma_C) = b_1(\Gamma_C)$ is the genus of  $|\Gamma_C|$. 
\[
h^{1,0}(H^1(C_\infty)) = h^{0,1}(H^1(C_\infty)) = \sum_{ v \in \ver(\Sigma) } g(C_v).
\]
In particular, the genus of $C_{\gen}$ is given by the formula
\[
g(C_{\gen}) = b_1(\Gamma_C) + \sum_{ v \in \ver(\Sigma) } g(C_v).
\]
%For a normal crossings pair, the latter formula is a classical result. 
\end{example}

\section{Limit Hodge structures of hypersurfaces}\label{s:hypersurfaces}

The goal of this section is to explicitly compute the limit mixed Hodge structure of a sch\"on family of hypersurfaces. We continue with the notation of Section~\ref{s:tropical}, and refer the reader to \cite{FulIntroduction} for basic facts on toric varieties. 

As in Section~\ref{s:schon},  let $\O$ be  the ring of germs of analytic functions in $\C$ in a neighborhood of the origin.  Let $X^\circ \subseteq T = \Spec \K[M]$ be a sch\"on hypersurface. 
That is, $X^\circ = \{ \sum_{u \in M} \alpha_u x^u = 0 \} \subseteq T$, for some $\alpha_u \in \K$. 
The \define{Newton polytope} $P$ of $X^\circ$ is the convex hull of $\{ u \in M \mid \alpha_u \ne 0 \}$, and the function 
$P \cap M \rightarrow \Z$, $u \mapsto \ord \alpha_u$,
induces a regular, lattice polyhedral decomposition $\T$ of $P$. Explicitly, the faces of $\T$ are the projections 
of the bounded faces of the convex hull of $\{ (u, \lambda) \mid \alpha_u \ne 0, \lambda \ge \ord \alpha_u \}$ in $M_\R \times \R$.  The tropicalization $\Trop(X^\circ)$ is `dual' to the polyhedral decomposition  $\T$, in the sense that $\Trop(X^\circ)$ has a natural polyhedral structure such that its  faces are in bijection with the positive dimensional faces of $\T$ (see, for example, \cite[Section~3]{RSTFirst}).  The corresponding recession fan of $\Trop(X^\circ)$ %may and will be chosen to be 
is the fan obtained from the normal fan of $P$ by removing its maximal cones. 
We will assume throughout this section that $P$ is $(d + 1)$-dimensional and that $P$ is \define{almost smooth} in the sense that 
%every positive dimensional face $F$ lies in precisely $\codim F$ facets. The latter condition is equivalent to the condition that 
the recession fan of  $\Trop(X^\circ)$ is unimodular. We remark that if $P$ is not almost smooth, then one can apply Proposition~\ref{p:normalcrossings} to obtain a unimodular refinement of the recession fan. %We will not pursue this here. 

As in Section~\ref{s:schon}, we may consider the corresponding proper map $f: X_{\an} \rightarrow \D$ to a disk $\D$ of sufficiently small radius, which is smooth over the punctured disk $\D^*$. A fixed non-zero fiber $X_{\gen}$
is a smooth hypersurface of the smooth toric variety $Y_P$ associated to the recession fan, and is sch\"on with respect to the associated complex torus. The central fiber $X_0$ is a disjoint union of locally closed subvarieties $\{ X_Q^\circ \mid Q \in \T \}$, each of which is Sch\"on with respect to its corresponding complex torus $U_Q \cong (\C^*)^{\dim Q}$.    If $\T^{(i)}$ denotes the set of faces of $\T$ whose relative interior lies in the relative interior of a codimension $i$ face of $P$, then Corollary~\ref{c:topological} implies that 
\begin{equation}\label{e:formula}
E(X_\infty;u,v) = \sum_{i =  0}^{d} \sum_{Q \in \T^{(i)}} E(X_{Q}^\circ)(1 - uv)^{\codim Q - i}. 
\end{equation}

The notion of a  sch\"on or \define{non-degenerate} hypersurface $Z$ of a torus  $(\C^*)^n$ was introduced by Khovanski{\u\i} in \cite{KhoNewton}. Danilov and  Khovanski{\u\i} gave an explicit combinatorial algorithm to compute the Hodge-Deligne polynomial of a non-degenerate hypersurface of a complex torus \cite{DKAlgorithm}. Later, Batyrev and Borisov produced a combinatorial formula \cite[Theorem~3.24]{BBMirror}, which was simplified by Borisov and Mavlyutov in \cite[Proposition~5.5]{BMString}.
%while a simpler formula can be found in  
The formula is determined by the Ehrhart polynomial of the Newton polytope and all its faces (see, for example, \cite{BRComputing}), together with the face poset of the Newton polytope, and is stated explicitly in Theorem~\ref{TheoremA}.
In particular, together with \eqref{e:formula}, we obtain a combinatorial formula for $E(X_\infty;u,v)$. 

By Remark~\ref{r:orbifold},  the Hodge numbers of $X_{\gen}$ are also determined by this formula, since the intersection of $X_{\gen}$ with any toric stratum of $Y_P$ may be regarded as a non-degenerate hypersurface with Newton polytope a face of $P$. In particular, the topological Euler characteristic of $X_{\gen}$ is given by the formula \cite[Remark~4.5]{DKAlgorithm}
\begin{equation}\label{e:volume}
e(X_{\gen}) = \sum_{Q \subseteq P} (-1)^{\dim Q - 1} (\dim Q)! \vol(Q),
\end{equation}
where $\vol(Q)$ denotes the Euclidean volume of a face $Q$ of $P$ with respect to its affine span.

The inclusion $X_{\gen} \hookrightarrow Y_P$ induces a map on cohomology 
\[
H^m(Y_P) \rightarrow H^m(X_{\gen}), 
\] 
which, by the Lefschetz hyperplane theorem, is an isomorphism for $m < d$, and injective for $m = d$. 
Moreover,  Poincar\'e duality implies that $H^m(X_{\gen}) \cong H^{2d - m}(X_{\gen})$. 
Recall that the \define{h-vector} $\{ h_{P,k} \}$ of $P$ 
 is defined as the coefficients of the polynomial 
\[
h_P(t) = \sum_{k = 0}^{d + 1} h_{P,k} t^k =  \sum_{Q \subseteq P} t^{d + 1 - \dim Q}(1 - t)^{\dim Q},
\]
where the sum runs over all non-empty faces $Q$ of $P$. 
The projective toric variety $Y_P$ has no odd cohomology, and  $\dim H^{2m}(Y_P) = 
h^{m,m}(Y_P) = h_{P,m}$. By Remark~\ref{r:monodromy}, we deduce that, for $m \ne d$,
the monodromy operator on $\dim H^{m}(X_{\gen})$ is trivial, and hence the limit mixed Hodge structure coincides with the usual pure Hodge structure. When $m = d$, the same holds for the image of  $H^{d}(Y_P)$ in $H^{d}(X_{\gen})$.  Hence, to describe the limit mixed Hodge structure of $X$, it remains to describe the limit mixed Hodge structure on 
\[
H^{d}_{\prim}(X_\infty)  :=  \coker[H^{d}(Y_P) \rightarrow H^{d}(X_{\gen})]. 
\]
It follows that $E(X_\infty;u,v)$ determines and is determined by the limit Hodge numbers $h^{p,q}(H^m(X_\infty))$. That is, we have shown the following. 

\begin{corollary}\label{c:formula}
With the notation above, there exist explicit combinatorial formulas for the limit Hodge numbers $h^{p,q}(H^m(X_\infty))$.
\end{corollary}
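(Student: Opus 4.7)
The plan is to combine formula (\ref{e:formula}) with the Lefschetz-theoretic constraints on the pure (non-primitive) part of the cohomology in order to pin down the one cohomology group, $H^d_{\prim}(X_\infty)$, in which the limit mixed Hodge structure can a priori be non-pure, and then read its individual Hodge numbers directly off a single combinatorial polynomial.

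First I would apply formula (\ref{e:formula}). Each stratum $X_Q^\circ$ is a non-degenerate hypersurface in the complex torus $U_Q$ with Newton polytope $Q$, so its Hodge-Deligne polynomial $E(X_Q^\circ; u, v)$ is given by the explicit combinatorial algorithm of Danilov-Khovanski\u i \cite{DKAlgorithm}, reformulated in closed form by Batyrev-Borisov and Borisov-Mavlyutov. This turns $E(X_\infty; u, v)$ into an explicit function of the polyhedral decomposition $\T$ and the Ehrhart polynomials of its faces.

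Next I would treat the cohomological degrees $m\neq d$ together with the non-primitive part of $H^d$. The discussion preceding the corollary already shows that for $m\neq d$, the monodromy on $H^m(X_\infty)$ is trivial and the limit mixed Hodge structure agrees with the pure Hodge structure on $H^m(X_{\gen})$; by Lefschetz (for $m<d$) and by Poincar\'e duality combined with Lefschetz (for $m>d$), this pure structure is Hodge-Tate with dimensions read off the $h$-vector of $P$. Likewise the image of $H^d(Y_P)$ in $H^d(X_\infty)$ is a flat (monodromy-invariant) sub-Hodge-structure of Hodge-Tate type with dimension $h_{P,d/2}$ when $d$ is even. This yields closed-form combinatorial expressions for $h^{p,q}(H^m(X_\infty))$ in all these cases.

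Finally I would isolate the primitive part. Using additivity of $E$ on the short exact sequence of limit mixed Hodge structures
\[
0 \longrightarrow \mathrm{Im}(H^d(Y_P)) \longrightarrow H^d(X_\infty) \longrightarrow H^d_{\prim}(X_\infty) \longrightarrow 0,
\]
together with $E(\psi_f)=\sum_m (-1)^m E(H^m(X_\infty))$ from \eqref{e:specialize} and Theorem~\ref{t:Schon}, I can solve for $E(H^d_{\prim}(X_\infty); u, v)$ by subtracting from $E(X_\infty;u,v)$ the already-computed $(uv)^j$-contributions indexed by $h_{P,j}$. Since $H^d_{\prim}(X_\infty)$ lives in a single cohomological degree, $E(H^d_{\prim}(X_\infty); u, v) = (-1)^d\sum_{p,q} h^{p,q}(H^d_{\prim}(X_\infty))\,u^p v^q$, so the individual limit Hodge numbers appear as coefficients of the resulting combinatorial polynomial. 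The main obstacle, which is more a point of care than a deep difficulty, is justifying that the image of $H^d(Y_P)$ is indeed a sub-limit-mixed-Hodge-structure (which uses that the ambient toric family $Y_P\times \D$ is constant over $\D$), so that the additivity of $E$ may be applied; everything else is bookkeeping of ingredients already stated in the paper.
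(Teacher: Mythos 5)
Your proposal is correct and follows essentially the same route the paper takes: formula~\eqref{e:formula} together with the Danilov--Khovanski\u{\i}/Batyrev--Borisov combinatorics for $E(X_Q^\circ)$, the Lefschetz hyperplane theorem and Poincar\'e duality to identify the limit mixed Hodge structure with the pure (Hodge--Tate) structure in all degrees $m\neq d$ and on the non-primitive part of degree $d$, and then the observation that the remaining primitive piece $H^d_{\prim}(X_\infty)$, being concentrated in a single cohomological degree, has limit Hodge numbers readable directly as coefficients of $E(X_\infty;u,v)$ after subtracting the known $(uv)^j$-contributions. The one point you flag --- that the image of $H^d(Y_P)$ is a sub-limit-mixed-Hodge-structure --- is indeed used silently in the paper, and your justification (the ambient toric family is constant, so monodromy respects the image) is the right one.
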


\begin{remark}
In general, knowing the limit Hodge numbers $h^{p,q}(H^m(X_\infty))$ of a degeneration is strictly more information than knowing the limit Hodge-Deligne polynomial $E(X_\infty)$. 
\end{remark}

We will now present simpler combinatorial formulas for some of the limit Hodge numbers  $h^{p,q}(H^{d}(X_\infty))$. 
Below, $\Int(Q)$ denotes the relative interior of a face $Q$ of $P$. 

\begin{example}\label{e:00}
Recall that  $\Trop(X^\circ)$ is dual to the polyhedral decomposition $\T$, such that an interior edge of $\T$ with $s$ interior lattice points corresponds to a maximal, unbounded face of $\Trop(X^\circ)$ of multiplicity $s + 1$. 
%Since $\Trop(X^\circ)$ is dual to the polyhedral decomposition $\T$, I
It follows that 
the parametrizing complex $\Gamma_{X^\circ}$ is homotopic to a wedge of 
$d$-spheres, which can be indexed by the lattice points in the interior of $P$ which lie on either an edge or a vertex of $\T$.  
%Moreover, 
%The parametrizing complex $\Gamma_{X^\circ}$ is homotopic to a wedge of $(d - 1)$-spheres. 
By \eqref{e:weight0},  %the number of such spheres is 
\[
h^{0,0}(H^{d}(X_\infty))  = b_{d}(\Gamma_{X^\circ})  = \sum_{ \substack{ Q \in \T^{(0)} \\   \dim Q \le 1    }  } \#(\Int(Q) \cap M),  
\]
and $h^{0,0}(H^m(X_\infty)) =  b_m(\Gamma_{X^\circ})  = 0$ for $0 < m < d$. 
%By the results of previous section \comment{reference}, for $0 < m \le d - 1$,
%\[
%h^{0,0}(H^m(X_\infty)) =  b_m(\Gamma_X) = \left\{ \begin{array}{cl}  a & \textrm{ if } m = d - 1 \\ 0 & \textrm{ if } 0 < m < d - 1 \end{array}\right. ,
%\]
\end{example}

We will need the following result of Danilov and  Khovanski{\u\i}. 

\begin{proposition}\cite[Proposition~5.8]{DKAlgorithm}\label{p:p0}
If $X_Q^\circ$ is a non-degenerate hypersurface of a complex torus with Newton polytope $Q$ in a lattice $M$, and $p > 0$, 
then 
\[
e^{p,0}(X_Q^\circ) = (-1)^{\dim Q + 1} \sum_{\substack{ Q' \subseteq Q  \\ \dim Q' = p + 1   }} \#(\Int(Q') \cap M).
\]
\end{proposition}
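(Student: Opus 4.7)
The plan is to reduce $e^{p,0}(X_Q^\circ)$ to a Hodge-theoretic calculation of global holomorphic $p$-forms on a compactification of $X_Q^\circ$, and then apply Khovanskii's combinatorial residue description in terms of interior lattice points of faces. We may assume $Q$ is full-dimensional in $M_\R$, so that $d := \dim X_Q^\circ = \dim Q - 1$. Compactify $X_Q^\circ$ to $\bar X \subseteq Y_Q$ inside the projective toric variety attached to $Q$ (first passing to a toric resolution if $Q$ is not smooth, which does not affect $h^{p,0}$ of the strict transform). The toric boundary stratification gives
$$[\bar X] = \sum_{\emptyset \ne Q' \subseteq Q} [X_{Q'}^\circ] \quad \text{in } K_0(\Var_\C),$$
exhibiting each proper face of $Q$ as the Newton polytope of a non-degenerate hypersurface in the corresponding torus orbit of $Y_Q$, which sets up an induction on $\dim Q$.

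The second step is to argue that $(p,0)$-classes in compactly supported cohomology are concentrated in the top degree for $p > 0$. The Lefschetz hyperplane theorem together with the vanishing $h^{p,0}(Y_Q) = 0$ on the toric variety forces $h^{p,0}(H^m(\bar X)) = 0$ for $p > 0$ and $m < d$. The long exact sequence relating $H^*_c(X_Q^\circ)$, $H^*(\bar X)$, and the cohomology of the boundary divisors then transfers this vanishing to give $h^{p,0}(H^m_c(X_Q^\circ)) = 0$ for $p > 0$ and $m \ne d$. Consequently
$$e^{p,0}(X_Q^\circ) = (-1)^d \, h^{p,0}(H^d_c(X_Q^\circ)) = (-1)^{\dim Q + 1} \, h^{p,0}(H^d_c(X_Q^\circ)),$$
matching the claimed sign, and reducing the proposition to the identification of $h^{p,0}(H^d_c(X_Q^\circ))$ with the stated lattice-point count.

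The key combinatorial input, due to Khovanskii, is the residue description
$$H^0(\bar X, \Omega^p_{\bar X}) \cong \bigoplus_{\substack{Q' \subseteq Q \\ \dim Q' = p + 1}} \bigl(\C \otimes_{\Z} (\Int(Q') \cap M)\bigr),$$
obtained from the logarithmic adjunction sequence on $(Y_Q, \bar X)$ combined with Bott's vanishing $H^i(Y_Q, \Omega^p_{Y_Q}(\log D)) = 0$ for $i > 0$ on a smooth complete toric variety. Applying this description to $\bar X_{Q'}$ for each face $Q' \subseteq Q$ in the boundary stratification, and then M\"obius inverting over the face poset of $Q$, collapses the contributions from faces of dimension strictly greater than $p + 1$ via the polytope identity of Lemma~\ref{l:Mobius}, leaving exactly the interior-lattice-point sum over $(p+1)$-dimensional faces. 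The principal obstacle is verifying the residue isomorphism: one must carefully track pole orders along each toric boundary divisor and invoke non-degeneracy to guarantee both that the resulting residue forms extend regularly across $\bar X$ and that they are linearly independent; a secondary subtlety is the cohomological vanishing outside top degree, which requires handling the mixed Hodge structure on the boundary strata simultaneously with the Lefschetz argument.
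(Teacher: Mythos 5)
The paper does not prove this proposition; it is cited directly from Danilov--Khovanski{\u\i}, so I can only assess your sketch on its own terms. Your opening moves (the toric stratification of $\overline{X}_Q$ and the concentration of $e^{p,0}$ in top degree) are sound, but the residue isomorphism you invoke is false. For $0 < p < d := \dim Q - 1$, one has $H^0(\overline{X}_Q, \Omega^p_{\overline{X}_Q}) = 0$: by the Lefschetz hyperplane theorem $H^p(\overline{X}_Q) \cong H^p(Y_Q)$, and the cohomology of a complete toric variety is spanned by algebraic cycle classes, hence has trivial $(p,0)$-part for $p > 0$. In contrast, the right-hand side of your claimed isomorphism, $\bigoplus_{\dim Q' = p+1} \C\otimes(\Int(Q')\cap M)$, is generally positive. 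The residue classes you have in mind are elements of $H^d_c(X_Q^\circ)$ with log poles on the boundary; they do \emph{not} extend to holomorphic forms on the compactification, and this is precisely what the vanishing above records. Once that identification fails, the final M\"obius inversion step as you describe it --- which already sums over faces because of the stratification --- would double-count.

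The argument becomes correct (and much shorter) if you replace the residue isomorphism with the following elementary inputs, applied to every face. For a non-empty face $Q'$ the closure $\overline{X}_{Q'}$ is smooth projective of dimension $\dim Q' - 1$, with $h^{p,0}(\overline{X}_{Q'}) = 0$ for $0 < p < \dim Q' - 1$ (by the Lefschetz argument above) and $h^{\dim Q' - 1, 0}(\overline{X}_{Q'}) = \#(\Int(Q')\cap M)$ (the genus formula, from adjunction). Thus for $p > 0$ one has $e^{p,0}(\overline{X}_{Q'}) = (-1)^{\dim Q' - 1}\#(\Int(Q')\cap M)$ when $p = \dim Q' - 1$ and $e^{p,0}(\overline{X}_{Q'}) = 0$ otherwise. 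Feeding this into the additivity $e^{p,0}(\overline{X}_{Q}) = \sum_{\emptyset \ne Q' \subseteq Q} e^{p,0}(X^\circ_{Q'})$ and inverting over the face poset of $Q$, whose M\"obius function is $\mu(Q',Q) = (-1)^{\dim Q - \dim Q'}$, only the faces with $\dim Q' = p+1$ survive and the signs collapse to
\[
e^{p,0}(X_Q^\circ) \;=\; (-1)^{\dim Q - 1}\sum_{\substack{Q'\subseteq Q\\ \dim Q' = p+1}} \#(\Int(Q')\cap M),
\]
which is the claim. No identification of a space of holomorphic $p$-forms beyond $p=0$ and $p=\dim Q' - 1$ is needed, and no log adjunction sequence is required; the only Hodge-theoretic input is the genus count on each face.
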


We deduce the following corollary.

\begin{corollary}\label{c:p0}
With the notation above, for $p > 0$, 
\[
h^{p,0} (H^{d}(X_\infty)) =  \sum_{  \substack{  Q \in \T^{(0)} \\ \dim Q = p + 1}}  \# (\Int(Q) \cap M). 
\]

\end{corollary}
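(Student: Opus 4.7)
The plan is to extract the limit Hodge number $h^{p,0}(H^d(X_\infty))$ from the formula for $E(X_\infty;u,v)$ in \eqref{e:formula}, by reading off the coefficient of $u^p v^0$ and simplifying the resulting combinatorial sum.

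First I would note that for $p>0$ and $m \ne d$, one has $h^{p,0}(H^m(X_\infty)) = 0$. Indeed, the discussion preceding Corollary~\ref{c:p0} shows that for such $m$ the monodromy on $H^m(X_{\gen})$ is trivial, so the limit mixed Hodge structure agrees with the usual pure one. By the Lefschetz hyperplane theorem (for $m<d$) and Poincar\'e duality (for $m>d$), $H^m(X_{\gen})$ is a subquotient of the cohomology of $Y_P$ up to a Tate twist, which is of pure Hodge--Tate type. Consequently $e^{p,0}(X_\infty) = (-1)^d\, h^{p,0}(H^d(X_\infty))$, and it suffices to compute $e^{p,0}(X_\infty)$.

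Next, extract the coefficient of $u^p v^0$ from \eqref{e:formula}. Since $(1-uv)^k$ has constant term $1$ as a polynomial in $v$, and every face of $\T$ belongs to a unique $\T^{(i)}$, this coefficient is simply $e^{p,0}(X_\infty) = \sum_{Q \in \T} e^{p,0}(X_Q^\circ)$. Each $X_Q^\circ$ is a non-degenerate hypersurface in $U_Q \cong (\C^*)^{\dim Q}$ with Newton polytope $Q$, so Proposition~\ref{p:p0} gives, for $p>0$,
\[
e^{p,0}(X_Q^\circ) = (-1)^{\dim Q + 1}\sum_{\substack{Q' \subseteq Q \\ \dim Q' = p+1}} \#(\Int(Q') \cap M).
\]
Swapping the order of summation,
\[
e^{p,0}(X_\infty) = \sum_{\substack{Q' \in \T \\ \dim Q' = p+1}} \#(\Int(Q') \cap M)\cdot S(Q'),\qquad S(Q') := \sum_{\substack{Q \in \T \\ Q \supseteq Q'}} (-1)^{\dim Q + 1}.
\]

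The main step, and the one I expect to require the most care, is the combinatorial identity
\[
S(Q') = \begin{cases}(-1)^{d} & \text{if } Q' \in \T^{(0)},\\ 0 & \text{otherwise.}\end{cases}
\]
I would prove this by rewriting $S(Q') = (-1)^{\dim Q' + 1}\bigl(1 - \chi(\mathrm{link}_\T(Q'))\bigr)$, using that the faces of $\mathrm{link}_\T(Q')$ correspond to faces of $\T$ strictly containing $Q'$ with dimensions shifted by $\dim Q' + 1$. Since $P$ is a $(d+1)$-dimensional topological ball and $\T$ is a polyhedral subdivision of it, the link of $Q' \in \T$ is a PL sphere of dimension $d - \dim Q'$ when $Q' \in \T^{(0)}$ and a topological ball otherwise; a short Euler characteristic computation in each case then gives the claim. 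Alternatively, the identity can be derived by a M\"obius inversion argument entirely analogous to Lemma~\ref{l:Mobius}. Combining this identity with $e^{p,0}(X_\infty) = (-1)^d\, h^{p,0}(H^d(X_\infty))$ yields the stated formula.
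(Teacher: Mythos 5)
Your proof follows exactly the same route as the paper: reduce to the middle cohomology using triviality of monodromy off degree $d$, extract $e^{p,0}(X_\infty) = \sum_{Q\in\T} e^{p,0}(X_Q^\circ)$ from \eqref{e:formula}, apply Proposition~\ref{p:p0}, swap the order of summation, and evaluate the inner alternating sum via the reduced Euler characteristic of $\mathrm{link}_\T(Q')$ (sphere in the interior case, ball in the boundary case). The only differences are cosmetic: you spell out the Lefschetz/Poincar\'e-duality reasoning behind the vanishing for $m\neq d$ and the fact that only the constant term in $v$ of $(1-uv)^k$ contributes, both of which the paper leaves implicit.
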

\begin{proof}
By the above discussion, $h^{p,0}(H^m(X_\infty)) = 0$ for $m \ne d$, and hence
\[
h^{p,0} (H^{d}(X_\infty)) = (-1)^{d}e^{p,0}(X_\infty).
\]
By \eqref{e:formula} and Proposition~\ref{p:p0},
\begin{align*}
e^{p,0}(X_\infty) &= \sum_{Q \in \T} e^{p,0} (X_{Q}^\circ) \\
&= \sum_{Q \in \T} (-1)^{\dim Q + 1} \sum_{\substack{ Q' \subseteq Q  \\ \dim Q' = p + 1   }} \#(\Int(Q') \cap M) \\
&= \sum_{\substack{ Q' \in \T  \\ \dim Q' = p + 1   }} \#(\Int(Q') \cap M) \cdot \sum_{\substack{ Q \in \T \\ Q' \subseteq Q }} (-1)^{\dim Q + 1}. 
\end{align*}
Since the link of a face $Q'$ in $\T$ is homotopic to a sphere if $Q' \in \mathcal{T}^{(0)}$, and 
is contractible otherwise, it follows that 
%\comment{ fix up } It follows from the proof of Theorem~\ref{t:independence} that 
\[
\sum_{\substack{ Q \in \T \\ Q' \subseteq Q }} (-1)^{\dim Q + 1} = \left\{ \begin{array}{c l}  (-1)^{d - 1} & \textrm{ if } Q' \in \mathcal{T}^{(0)} \\ 0 & \textrm{otherwise}. \end{array} \right. 
\]
\end{proof}

\begin{remark}
From Example~\ref{e:00}, Corollary~\ref{c:p0}, and \eqref{e:Hodge}, we recover the well-known fact that  the genus $h^{d,0}(X_{\gen})$ of $X_{\gen}$ equals the number of interior lattice points of $P$.  
\end{remark}

\begin{example}[cf. Section~\ref{s:smooth}]
Suppose that $\mathcal{T}$ is a unimodular triangulation of $P$. That is, suppose that each maximal face of $\mathcal{T}$ is isomorphic to the standard $(d  + 1)$-dimensional simplex. For each face $Q$ in 
$\T$, $X_Q^\circ$ is isomorphic to the intersection of a general linear hyperplane in projective space $\P^{\dim Q}$ with the maximal torus $(\C^*)^{\dim Q}$, and hence $E(X_F^\circ)$ is a polynomial in $uv$. 
It follows from \eqref{e:formula} and the discussion above that all non-zero limit Hodge numbers are of type $(p,p)$. Hence, if one sets $m = d$ in \eqref{e:Hodge}, then at most one term on the right hand side of the equation is non-zero, and
\[
h^{p,p}(H^{d}(X_\infty)) = h^{p, d - p}(X_{\gen}). 
\]
\end{example}

\begin{example}[Curves]
Let us assume that $P$ is a $2$-dimensional lattice polytope. By Example~\ref{e:00} and Corollary~\ref{c:p0}, 
\[
h^{0,0}(H^{1}(C_\infty))  = h^{1,1}(H^{1}(C_\infty))  =  \sum_{ \substack{ Q \in \T^{(0)} \\   \dim Q \le 1    }  } \#(\Int(Q) \cap M),  
\]
\[
h^{1,0}(H^{1}(C_\infty))  = h^{0,1}(H^{1}(C_\infty))  =  \sum_{ \substack{ Q \in \T^{(0)} \\   \dim Q = 2    }  } \#(\Int(Q) \cap M).
\]
\end{example}

\begin{example}[Surfaces]
Let us assume that $P$ is a $3$-dimensional lattice polytope.  By Example~\ref{e:00} and Corollary~\ref{c:p0}, 
\[
h^{0,0}(H^{2}(X_\infty))  = h^{2,2}(H^{2}(X_\infty))  =  \sum_{ \substack{ Q \in \T^{(0)} \\   \dim Q \le 1    }  } \#(\Int(Q) \cap M),  
\]
\begin{align*}
h^{1,0}(H^{2}(X_\infty))  = h^{0,1}(H^{2}(X_\infty)) = h^{2,1}(H^{2}(X_\infty)) &= h^{1,2}(H^{2}(X_\infty))  \\ =  \sum_{ \substack{ Q \in \T^{(0)} \\   \dim Q =  2    }  } \#(\Int(Q) \cap M),  
\end{align*}
\[
h^{2,0}(H^{2}(X_\infty))  = h^{0,2}(H^{2}(X_\infty))  =  \sum_{ \substack{ Q \in \T \\   \dim Q = 3    }  } \#(\Int(Q) \cap M).
\]
Lastly, $h^{1,1}(H^{2}(X_\infty))$ can easily be deduced from the combinatorial formula \eqref{e:volume}
for the topological Euler characteristic $e(X_{\gen})$. Explicitly, $e(X_{\gen}) = b_2(X_{\gen}) + 2$, and 
$b_2(X_{\gen}) = \sum_{p,q} h^{p,q}(H^{2}(X_\infty))$ by \eqref{e:Hodge}. 
\end{example}

\section{The Tropical Motivic Nearby Fiber of Matroidal Tropical Varieties}\label{s:smooth}

A matroidal tropical variety is one that is locally described by the matroid fans of Ardila and Klivans \cite{AKBergman}.  They were first introduced by Mikhalkin in \cite{MikhRat}  who originally called them ``smooth tropical varieties'' but has since renamed them  ``effective tropical cycles of multiplicity $1$'' in \cite{MikhRTG}.  The name of such tropical varieties will eventually be standardized in the literature but we use the matter-of-fact adjective matroidal for the time-being.  Some evidence for Mikhalkin's original name ``smooth tropical variety'' is provided by Proposition \ref{p:smooth}.

Matroids are abstract objects that axiomatize the combinatorics of linear independence.  A rank $d+1$ \define{matroid} $\M$ on a finite set $E=\{0,1,\dots,n\}$ is given by a rank function $r:2^E\rightarrow\N\cup\{0\}$ satisfying
\begin{enumerate}
\item $r(I)\leq |I|$,
\item $I\subseteq J$ implies $r(I)\leq r(J)$,
\item $r(I\cup J)+r(I\cap J)\leq r(I)+r(J)$,
\item $r(E)=d+1$.
\end{enumerate}
A \define{flat} of $\M$ is a subset $I\subseteq E$ such that for all $j\in E\setminus I$, $r(I\cup\{j\})>r(I)$. 
They form a lattice under the partial order of inclusion.
One may associate such a matroid to a $d$-dimensional linear subspace $X$ of $\P^n$.  For $I\subset E$, let $H_I$ be the coordinate subspace given by $\cap_{i\in I} \{X_i=0\}$, and set $r(I)=d-\dim(X\cap H_I)$ (where we use the convention that the dimension of the empty set is $-1$).  $I$ is a flat of $\M$ if and only if the linear space $X\cap H_I$ is not %contained in
equal to 
 $X\cap H_J$ for any $J\supset I$.  Equivalently, if we define $H_I^*$ to be the subset of $H_I$ given by $X_j\neq 0$ for $j\not\in I$, $I$ is  a flat of $\M$ if and only if $X$ intersects $H_I^*$.

The matroid $\M$ can be encoded in a simplicial fan called the {\em matroid fan} $\Delta_\M$. Let $N$ be the lattice
\[
N = \Z^E / \< e_0 + \cdots + e_n \>.
\]
$\Delta_\M$ will be a fan in $N_\R$.  For a subset $I \subset E$, let $e_I$ be the vector 
\[
e_I = \sum_{i \in I} e_i
\]
in $N_\R$.
The rays of the matroid fan $\Delta_\M$ are $\rho_L=\R_+ e_L$ for proper flats $L \subsetneq E$ of the matroid.  More generally, the $k$-dimensional cones of the matroid fan correspond to the $k$-step flags of proper flats:  if $\cF$ is a flag of flats $L_1 \subset \cdots \subset L_k$ then the corresponding cone is $\sigma_\cF=\Span_+(\{ e_{L_1}, \ldots, e_{L_k} \})$.  Because every flag of flats in a matroid can be extended to a full flag, the matroid fan $\Delta_\M$ is of pure dimension $d$.  Each top-dimensional cone of $\Delta_\M$ is given multiplicity $1$.  Ardila and Klivans introduced this fan as the fine subdivision of the Bergman fan of a matroid.  

The tropicalization of a hyperplane arrangement complement $X^\circ$ defined over $\C$ is a fan of the form $\Delta_\M$.  In fact, let $X\subset\P^n$ be a $d$-dimensional linear space and set $X^\circ=X\cap (\C^*)^n$.  Let $\M$ be the matroid on $E=\{0,1,\dots,n\}$ associated to $X$.   Then, $\Trop(X^\circ)=\Delta_\M$.

\begin{example}
Let $\M$ be a rank $2$ matroid on $E=\{0,1,\dots,n\}$.  Let $I_1,\dots,I_s$ be the set of of rank $1$ flats.  $\Delta_{\M}$ is the union of the rays $\{ \rho_{I_k} \}_{1 \le k \le s}$.
In the special case that every element of $I$ is a rank $1$ flat, $\Delta_{\M}$ is the union of $\rho_0,\dots,\rho_n$ which is the $1$-skeleton of the fan corresponding to $\P^n$.  This is the tropicalization of a generic line in $\P^n$ which intersects each coordinate hyperplane in a generic point.
\end{example}

Moreover, if a matroid fan is the tropicalization of a variety, the variety must be a hyperplane arrangement complement.

\begin{proposition}\label{p:duck}\cite[Proposition~4.2]{KPRealization} Let $X^\circ\subset(\C^*)^n$ be a subvariety.  Suppose that $|\Trop(X^\circ)|=|\Delta_\M|$ and each top-dimensional cell of $\Trop(X^\circ)$ has multiplicity $1$.  Then the closure of $X^\circ$ in $\P _{\C}^n$ is a $d$-dimensional linear subspace whose matroid is $\M$.
\end{proposition}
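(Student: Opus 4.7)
My strategy is to use tropical intersection theory to show that $X := \overline{X^\circ} \subseteq \P^n$ has degree one, conclude that $X$ is a linear subspace, and then match the matroid. The key tool is the Sturmfels--Tevelev theorem (see Maclagan--Sturmfels, \emph{Introduction to Tropical Geometry}) that the degree of a $d$-dimensional subvariety $Y \subseteq \P^n$ equals the sum of multiplicities at the points of the stable tropical intersection of $\Trop(Y \cap (\C^*)^n)$ with a generic translate of $\Trop(V \cap (\C^*)^n)$, where $V \subseteq \P^n$ is a generic codimension-$d$ linear subspace.

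The technical core will be to show that every matroid fan $\Delta_\M$ of dimension $d$ has tropical degree $1$ in this sense. I would do this by induction on the rank of $\M$: the stable intersection of $\Delta_\M$ with a generic tropical hyperplane is, by a direct calculation using the flag-of-flats structure of the top cones, again a matroid fan of rank one lower (for a contraction of $\M$), and in the rank-one base case $\Delta_\M$ is a single point with multiplicity $1$. Alternatively, one can invoke the Ardila--Klivans/Feichtner--Sturmfels interpretation of $\Delta_\M$ as a Minkowski weight representing a linear class in the Chow ring of a smooth toric completion of $(\C^*)^n$, which is the class of a generic $d$-plane.

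Given this, the hypothesis that $|\Trop(X^\circ)| = |\Delta_\M|$ with all top-dimensional multiplicities equal to $1$ yields $\deg X = 1$, so $X$ is a $d$-dimensional linear subspace $L \subseteq \P^n$. To identify the matroid, $X^\circ = L \cap (\C^*)^n$ is a hyperplane arrangement complement, and the result recalled just before the proposition gives $\Trop(X^\circ) = \Delta_{\M(L)}$, where $\M(L)$ is the matroid of $L$ relative to the coordinate hyperplanes. The hypothesis then forces $\Delta_{\M(L)} = \Delta_\M$ as weighted fans, and since the flats (hence the ranks) of a matroid are encoded in the rays $\rho_L = \R_{\ge 0} e_L$ and the flag structure of its Bergman fan, we conclude $\M(L) = \M$.

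The main obstacle is the tropical-degree computation for an abstract, possibly non-realizable $\M$: the induction requires careful tracking of lattice-index multiplicities under stable intersection with translates of the standard tropical hyperplane, and a verification that slicing $\Delta_\M$ transversely recovers the Bergman fan of the appropriate minor of $\M$. Once the degree-one statement is in place, the passage from $\deg X = 1$ to linearity of $X$ and then to the identification $\M(L) = \M$ is largely formal.
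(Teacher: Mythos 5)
The paper does not prove this proposition; it is cited from \cite[Proposition~4.2]{KPRealization} (Katz--Payne), so there is no in-paper argument to compare against. Your proposal is, to the best of my knowledge, essentially the argument used in that reference: reduce to the statement that $\Delta_\M$ has tropical degree one, deduce that $\overline{X^\circ}\subset\P^n$ has degree one and is therefore a $d$-plane, and then read the matroid off the fan. The degree-one lemma is correct and both routes you sketch work --- the cleanest is the Minkowski-weight/Chow-ring formulation, since the Bergman fan $\Delta_\M$ represents the class of a generic linear $d$-plane in the Chow cohomology of the permutohedral toric variety, independent of realizability of $\M$. The inductive stable-intersection route is also standard, but you would need to be careful that slicing $\Delta_\M$ by a generic translate of the tropical hyperplane produces the Bergman fan of a rank-one-lower matroid (a contraction by a generic element, which requires extending the ground set); the Chow-ring route avoids this bookkeeping.

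Two small points worth making explicit. First, the passage from $\deg \overline{X^\circ}=1$ to ``$\overline{X^\circ}$ is a $d$-plane'' uses that $\overline{X^\circ}$ is irreducible of pure dimension $d$; a lower-dimensional component would be invisible both to $|\Trop(X^\circ)|\subseteq|\Delta_\M|$ and to the multiplicities on top cells, so you should flag that ``subvariety'' is being read as irreducible (which is the usual convention and what the degree argument needs). Second, your closing step --- that $|\Delta_{\M(L)}|=|\Delta_\M|$ as sets forces $\M(L)=\M$ --- is indeed formal, but it is exactly the content of the unlabeled lemma the paper states immediately before Proposition~\ref{p:duck}: the proper flats of $\M$ are precisely the $I$ with $|\Delta_\M|\cap\tau^\circ_I\neq\emptyset$, and $r(I)$ is recovered as the local dimension of the tangent space there, so the underlying set of the Bergman fan already determines the lattice of flats and hence the matroid. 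Invoking that lemma explicitly would tidy up the final step.
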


The matroid $\M$ can be recovered from the underlying set $|\Delta_\M|$.  For $I\subset E$, let $\tau_I=\Span_+(\{e_i|i\in I\})$.  The set of all $\tau_I$ form the fan $\Delta$ corresponding to the toric variety $\P^n$.  Note that each open cone of $\Delta_\M$ is contained in  a unique open cone in $\Delta$.  In fact, $\sigma_\cF\subset \tau_{L_k}$. 

\begin{lemma} Let $I\subset E$.   $I$ is a proper flat of $\M$ if and only if $|\Delta_\M|\cap\tau^\circ_I\neq\emptyset$.  In that case, $r(I)$ is equal to the dimension of the tangent space of a smooth point of $|\Delta_\M|\cap\tau^\circ_I$. 
\end{lemma}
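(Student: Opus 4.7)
The argument rests on a direct coordinate computation together with a standard fact about matroid lattices. The key observation is that for any flag of flats $\cF\colon L_1\subsetneq\cdots\subsetneq L_k$ one has $\sigma_\cF^\circ\subseteq\tau_{L_k}^\circ$: a point $v=\sum_j\lambda_j e_{L_j}$ with $\lambda_j>0$ has $i$-th coordinate $\sum_{j\colon i\in L_j}\lambda_j$, which is strictly positive precisely when $i\in L_k$.

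The equivalence follows at once. If $p\in|\Delta_\M|\cap\tau_I^\circ$, then $p\in\sigma_\cF^\circ$ for some flag $\cF$, so $p\in\tau_{L_k}^\circ$; since the relative interiors $\{\tau_J^\circ\mid J\subseteq E\}$ partition the nonnegative orthant, $I=L_k$ is a proper flat. Conversely, given a proper flat $I$, the one-term flag $\cF=(I)$ yields $\sigma_\cF^\circ=\R_{>0}\cdot e_I\subseteq\tau_I^\circ$, so the intersection is nonempty.

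For the dimension claim, the same lemma furnishes a decomposition $|\Delta_\M|\cap\tau_I^\circ=\bigsqcup_\cF\sigma_\cF^\circ$ indexed by flags $\cF\colon L_1\subsetneq\cdots\subsetneq L_k=I$ of proper flats terminating at $I$. A smooth point must lie in the relative interior of a unique top-dimensional piece $\sigma_\cF^\circ$, since a point on the boundary between two such pieces would be singular. Its tangent space is therefore the linear span of that $\sigma_\cF$, whose dimension equals the length $k$ of $\cF$; here the vectors $e_{L_1},\ldots,e_{L_k}$ are linearly independent in $N_\R$ because $L_k\neq E$. Thus the tangent-space dimension is the maximum length of a chain of proper flats ending at $I$.

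The remaining and essentially only nontrivial step is to identify this maximum with $r(I)$. This is the Jordan--Dedekind chain condition for the lattice of flats of a matroid, or equivalently the statement that $r$ is a rank function on the flat lattice: every maximal chain of flats from the minimum flat up to $I$ has length exactly $r(I)$. Invoking this standard result from matroid theory finishes the proof.
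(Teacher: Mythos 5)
Your proof is correct and follows essentially the same route as the paper's: both hinge on the fact that $\sigma_\cF^\circ\subseteq\tau_{L_k}^\circ$, that nonempty intersection forces $I=L_k$ to be a flat, and that the maximal cones of $|\Delta_\M|\cap\tau_I^\circ$ come from saturated flags ending at $I$, whose length is $r(I)$. Your explicit coordinate check and the appeal to the Jordan--Dedekind chain condition are just making explicit what the paper's phrase ``$\sigma_\cF^\circ$ is maximal iff $\cF$ is a saturated flag, hence $\dim\sigma_\cF=r(I)$'' encapsulates.
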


\begin{proof}
Suppose $I$ is a flat of $\M$, then $\rho_I$ is a ray in $\Delta_M$ and is contained in $\tau_I^\circ$.

Now suppose $\Delta_\M\cap \tau^\circ_I\neq \emptyset$.  Then there is an open cone $\sigma_\cF$ contained in the relative interior of  $\tau_I$.  If $\cF=\{L_1\subset \dots\subset L_k\}$ then $L_k=I$ and $I$ is a flat of $\M$.  Now, $\sigma^\circ_\cF$ is a maximal cone of $\Delta_\M\cap \tau^\circ_I$ if and only if $\cF$ is a saturated flag.  Consequently, $\dim(\sigma_\cF)=r(I)$.  In that case, the linear span of $\sigma_\cF$ is equal to the tangent space of a point in its relative interior.
\end{proof}

Let $P_\M$ be the poset of flats of $\M$ under inclusion where $\hat{0}=\emptyset$ is the unique minimal element and $\hat{1}=\{0,1,\dots,n\}$.  Then by Theorem 1 of \cite{AKBergman}, $ \Delta_\M\cap S^{n-1}$ is a geometric realization of the order complex of the poset $\Delta(P_\M\setminus\{\hat{0},\hat{1}\})$.   Consequently, $\mu(\hat{0},\hat{1})=\tilde{\chi}(|\Delta_\M|\cap S^{n-1})$ by  Proposition 3.8.6 of \cite{Stanley1}.
One may recover the M\"{o}bius function of $P_\M$ from $\Delta_M$.   

\begin{lemma} \label{l:mobiuslink} Let $I$ be a flat of $\M$ of positive rank.  Let $K$ be the set of cones of $\Delta_M$ that intersect $\tau^\circ_I$ in $\rho_I$ ordered under inclusion.  Let $\sigma_I'$ be a maximal cone in $K$.  Then 
\[\mu(\hat{0},I)=\tilde{\chi}(\lk_{\sigma_I'}(\Delta_\M)).\]
\end{lemma}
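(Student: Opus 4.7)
The plan is to give $K$ and the link $\lk_{\sigma'_I}(\Delta_\M)$ explicit combinatorial descriptions in terms of the poset $P_\M$ of flats of $\M$, and then invoke Philip Hall's theorem \cite[Proposition~3.8.6]{Stanley1}, which expresses $\mu(\hat{0},I)$ as the reduced Euler characteristic of the order complex of the open interval $(\hat{0},I)$ in $P_\M$.

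First I would identify $K$ combinatorially. Using the fact recorded in the excerpt that each open cone $\sigma_\cF^\circ$ of $\Delta_\M$ lies in $\tau_{L_k}^\circ$ with $L_k = \max \cF$, the closed cone $\sigma_\cF$ decomposes as the disjoint union of the relative interiors $\sigma_{\cF'}^\circ$ of its subcones, indexed by subflags $\cF' \subseteq \cF$, and $\sigma_{\cF'}^\circ \cap \tau_I^\circ$ is nonempty exactly when $\max \cF' = I$. Hence $\sigma_\cF \cap \tau_I^\circ$ equals $\rho_I^\circ$ if and only if $\{I\}$ is the unique subflag of $\cF$ with maximum $I$, i.e., if and only if $I = \min \cF$. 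Therefore
\[
K \;=\; \{\sigma_\cF \in \Delta_\M : I = \min \cF\}.
\]

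Next I would describe $\sigma'_I$ and compute its link. Setting $r := r(I)$, a maximal cone of $K$ corresponds to a saturated chain $\cF' = \{I = L_1 \subsetneq L_2 \subsetneq \cdots \subsetneq L_{d-r+1}\}$ of proper flats with $r(L_j) = r + j - 1$, so that $L_{d-r+1}$ attains rank $d$, the largest rank available to a proper flat in the rank-$(d+1)$ matroid $\M$. Since $\Delta_\M$ is simplicial (the vectors $e_{L_1}, \ldots, e_{L_k}$ are linearly independent in $N_\R$ for any chain of proper flats), a cone $\sigma_\cG$ lies in $\lk_{\sigma'_I}(\Delta_\M)$ exactly when $\cG$ is a chain of proper flats disjoint from $\cF'$ such that $\cG \cup \cF'$ remains a chain in $P_\M$. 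Any such new flat $M \in \cG$ is comparable to every $L_j$; if $M \supsetneq I$, then $M$ would have to be inserted strictly between consecutive $L_j$ or strictly above $L_{d-r+1}$, both impossible by the saturation of $\cF'$ and the maximality of $r(L_{d-r+1}) = d$. Hence $M \subsetneq I$, and $\cG$ is an arbitrary chain in the open interval $(\hat{0}, I) \subset P_\M$; conversely, any such chain gives a face of the link.

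It follows that $\lk_{\sigma'_I}(\Delta_\M)$ is canonically the order complex $\Delta((\hat{0}, I))$ of the open interval $(\hat{0},I)$ in $P_\M$. Philip Hall's theorem then delivers
\[
\mu(\hat{0}, I) \;=\; \tilde{\chi}(\Delta((\hat{0}, I))) \;=\; \tilde{\chi}(\lk_{\sigma'_I}(\Delta_\M)),
\]
as required. The main obstacle is the first step: correctly unpacking the defining condition for $K$ so as to identify it with the collection of flags of proper flats whose minimum is $I$; once this is in hand, the link computation and appeal to Hall's theorem are combinatorial bookkeeping.
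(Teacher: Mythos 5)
Your proof is correct and follows essentially the same route as the paper: identify the cones in $K$ as those $\sigma_{\cF}$ with $\min\cF = I$, observe that a maximal such cone corresponds to a saturated chain from $I$ up to a flat of rank $d$, deduce that the link is the order complex of the open interval $(\hat 0, I)$ in $P_\M$, and invoke \cite[Proposition~3.8.6]{Stanley1}. The only cosmetic difference is that you spell out the decomposition of $\sigma_\cF$ into open faces to justify the characterization of $K$, whereas the paper states it directly.
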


\begin{proof}
Each cone in $\Delta_M$ that intersects $\tau_I^\circ$ in $\rho_I$ is of the form $\sigma_I'=\sigma_\cF$ for $\cF$ a flag of flats $\{L_1=I\subset\dots\subset L_k\}$.  For the cone $\sigma_\cF$ to be maximal of that type, it must correspond to a saturated flag of flats. % $\{I\subset\dots \subset L_k\}$.  
A cone $\sigma_{\cF'}$ contains $\sigma_\cF$ in its closure if and only if $\cF$ is the terminal segment of $\cF'$.  It  follows that $
\lk_{\sigma_I'}(\Delta_\M)$ is a geometric realization of the 
order complex $\Delta((P_\M)_I\setminus \{\hat{0},I\})$, where $(P_\M)_I$ is the principal order ideal $(P_\M)_I=\{J|J\subseteq I\}$.
The conclusion follows form Proposition 3.8.6 of \cite{Stanley1}.
\end{proof}

We can use the above lemma in combination with $\mu(\hat{0},\hat{0})=1$ to determine all values of $\mu(\hat{0},I)$.
Note that if $I$ is a rank $1$ flat, then $\mu(\hat{0},I)=\tilde{\chi}(\emptyset)=-1$.
The M\"{o}bius function can be read from the geometry of $|\Delta_\M|$ once we have chosen $\sigma_I'$ since $\lk_{\sigma_I'}(\Delta_\M)=\pi_I'(\Star_{\Delta_\M}(\sigma_I'))\cap S(N/N_{\sigma_I'})$, where $\pi_I':N\rightarrow N/N_{\sigma_I'}$ and $S(N/N_{\sigma_I'})$ is the unit sphere in $N/N_{\sigma_I'}$.

Recall that the characteristic polynomial of $\M$ is given by
\[\chi_{_\M}(q)=\sum_{I\in P_\M} \mu(\hat{0},I)q^{d-r(I)}.\]
Consequently,
\[\chi_{_\M}(q)=1-\sum_{%I |
r(I)=1} q+
\sum_{%I |
r(I)\geq 2} \tilde{\chi}(\lk_{\sigma'_I}(\Delta_\M))q^{d-\dim(\sigma'_I)}.\]

We have to make use of the projective motivic version of Theorem 5.2 of \cite{OSHyperplanes}.

\begin{lemma} \label{l:hyper} The class of $[X^\circ]$ in $K_0(\Var_\C)$ is given by
\[[X^\circ]=\frac{\L\chi_\M(\L)-\chi_\M(1)}{\L-1}.\]
\end{lemma}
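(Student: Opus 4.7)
The plan is to compute $[X^\circ]$ by inclusion-exclusion on the coordinate hyperplanes of $\P^n$, to recognize the resulting alternating sum as a value of the characteristic polynomial $\chi_\M$ via Whitney's theorem, and finally to divide by $\L - 1$ using the classical vanishing of the M\"obius sum on the lattice of flats.

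First, iterated application of the scissor relation \eqref{e:cutty} gives
\[
[X^\circ] \;=\; \sum_{I \subseteq E} (-1)^{|I|}\,[X \cap H_I].
\]
Because $X$ is a $d$-dimensional linear subspace, each $X \cap H_I$ is a projective linear space of dimension $d - r(I)$, so $[X \cap H_I] = [\P^{d-r(I)}] = (\L^{d+1-r(I)} - 1)/(\L - 1)$ in $K_0(\Var_\C)$, with the convention that this vanishes when $r(I) = d + 1$. Splitting the two summands in the numerator and noting that $\sum_{I \subseteq E} (-1)^{|I|} = 0$ since $E$ is nonempty, this yields
\[
(\L - 1)\,[X^\circ] \;=\; \sum_{I \subseteq E} (-1)^{|I|}\,\L^{d+1-r(I)}.
\]

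Next, I would group the subsets $I$ by their flat closure $\bar I$ in $P_\M$ and apply the classical identity $\sum_{I : \bar I = F} (-1)^{|I|} = \mu_{P_\M}(\hat 0, F)$ (obtained by M\"obius inversion on $P_\M$; cf.\ Crapo--Rota). Whitney's theorem then gives
\[
\sum_{I \subseteq E} (-1)^{|I|}\,q^{d+1-r(I)} \;=\; \sum_{F \in P_\M} \mu(\hat 0, F)\,q^{d+1-r(F)} \;=\; q\,\chi_\M(q) \;-\; \chi_\M(1),
\]
where the last equality uses the paper's convention $\chi_\M(q) = \sum_{F \subsetneq E} \mu(\hat 0, F)\,q^{d-r(F)}$ together with the relation $\mu(\hat 0, E) = -\chi_\M(1)$, itself a consequence of the vanishing $\sum_{F \in P_\M} \mu(\hat 0, F) = 0$. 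Setting $q = \L$ and dividing by $\L - 1$, which is legitimate because the right-hand side visibly vanishes at $\L = 1$, produces the stated formula.

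The main obstacle is the careful bookkeeping between the paper's slightly nonstandard $\chi_\M$ and the homogeneous characteristic polynomial that arises naturally from Whitney's theorem; in particular, one must correctly account for the contribution of the top flat $E$, which is precisely what produces the $-\chi_\M(1)$ correction. A cleaner alternative would be to first pass to the affine cone $\hat X \subseteq \C^{n+1}$: the torus complement $\hat X \cap (\C^*)^{n+1}$ is a trivial $\C^*$-bundle over $X^\circ$ via the scaling action, so $(\L - 1)[X^\circ] = [\hat X \cap (\C^*)^{n+1}]$, and the latter is computed directly as $\sum_I (-1)^{|I|}\,\L^{d+1-r(I)}$ without having to treat $[\P^{-1}]$ as a separate case.
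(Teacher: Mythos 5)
Your argument is correct and arrives at the same key decomposition the paper uses, namely $[X^\circ]=\sum_{F\in P_\M}\mu(\hat 0,F)[X\cap H_F]$ followed by the substitution $[X\cap H_F]=[\P^{\,d-r(F)}]$ and the same algebraic simplification isolating the contribution of the top flat $E$. The only cosmetic difference is that the paper states this decomposition directly as M\"obius inversion on the lattice of flats, while you derive it from inclusion--exclusion over all coordinate subspaces together with the Crapo--Rota identity $\sum_{\bar I=F}(-1)^{|I|}=\mu(\hat 0,F)$; this amounts to the same thing.
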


\begin{proof}
From $[\A^n] = \L^n$, we have
\[E(\P^n)=1+\L+\dots+\L^n=\frac{\L^{n+1}-1}{\L-1}.\]
Since $X^\circ$ is expressed motivically as $\sum_{I\in P_M} \mu(\hat{0},I) [X\cap H_I]$, 
\[(\L-1)[X^\circ]=\sum_{I\in P_\M} \mu(\hat{0},I)((\L)^{d-r(I)+1}-1)=\L\chi_\M(\L)-\chi_\M(1).\]
\end{proof}

\begin{example}
Let us consider a rank $2$ matroid on $E=\{0,1,\dots,n\}$.  Let $I_1,\dots,I_s$ be its rank $1$ flats.  
By Lemma \ref{l:mobiuslink}, we know
\[\mu(\hat{0},\hat{0})=1,\ \mu(\hat{0},I_k)=-1.\]
Consequently,
\[\chi_\M(q)=q-s.\]
It follows that
\[[X^\circ]=\frac{\L(\L-s)-(1-s)}{\L-1}=\L-s+1\]

Alternatively, we know that $\Delta_\M$ is the tropicalization of the complement of $s$ points in $\P^1$.  It follows that $[X^\circ]=\L-s+1$.
\end{example}

We now globalize the notion to tropical varieties that locally look like matroid fans. 

\begin{definition} A tropicalization $\Trop(X^\circ)$ is said to be {\em matroidal with respect to a rational polyhedral  structure $\Sigma$ on $\Trop(X^\circ)$} if
\begin{enumerate}
\item Every top-dimensional cell of $\Trop(X^\circ)$ has multiplicity $1$.

\item For every face $F$ of $\Trop(X^\circ)$, the star quotient $\Star_{\Trop(X^\circ)}(F)/N_F$ has the same underlying set as some matroid fan $\Delta_{\M_F}$ for some choice of integral basis $e_1,\dots,e_l$ of $N/N_F$.
\end{enumerate}
\end{definition}

\begin{definition} A tropicalization $\Trop(X^\circ)$ is said to be {\em matroidal} if it is matroidal with respect to some $\Sigma$.
\end{definition}

Note that we have $e_0=-e_1-\dots-e_l$ in $N/N_F$.  We do not require the choice of $e_0,\dots,e_l$ to be global so $\M_F$ is not necessarily uniquely defined.  Nor do we require that there is a polyhedral structure on $\Trop(X^\circ)$ inducing the matroid fan structure on the stars of cells.   This is done by Allermann \cite{ASmooth} in developing intersection theory on matroidal tropical varieties.  His definition is also different form ours because his tropical varieties are locally modeled on the uniform matroid.  Because our results are independent of the polyhedral structure on $\Trop(X^\circ)$ we do not require this.  However, to get finer data about $X$ from $\Trop(X^\circ)$, doing so may be necessary.  Examples of matroidal tropical varieties include planar trivalent curves all of whose vertices have multiplicity $1$ (in the sense of \cite[Definition~2.16]{MikhEnum}) and tropical hypersurfaces corresponding to a Newton subdivision all of whose cells are unimodular simplices.

\begin{lemma} Suppose $\Trop(X^\circ)$ is matroidal with respect to $\Sigma$.  If $\Sigma'$ is a refinement of $\Sigma$, then $\Trop(X^\circ)$ is matroidal with respect to $\Sigma'$.
\end{lemma}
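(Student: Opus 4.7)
The plan is to verify both defining conditions of matroidality for the refinement $\Sigma'$. The multiplicity condition is straightforward: every top-dimensional cell $F' \in \Sigma'$ is contained in a unique top-dimensional cell $F \in \Sigma$, and its multiplicity equals that of $F$, since both are computed from the initial degeneration $\init_w X^\circ$ at any $w \in \Int F'$. Hence the top-dimensional cells of $\Sigma'$ all have multiplicity $1$.

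Next, for the matroid condition at a face $F' \in \Sigma'$, I would let $F \in \Sigma$ be the unique face with $\Int F' \subseteq \Int F$ and set $k = \dim F - \dim F' \ge 0$. The key observation is that $|\Star_{\Sigma'}(F')| = |\Star_\Sigma(F)|$ as subsets of $N_\R$, since both describe the local tangent set at any $w \in \Int F'$ in the common underlying space $\Trop(X^\circ)$. However, the star quotients are taken modulo different sublattices $N_{F'} \subseteq N_F$. The natural projection $\pi \colon N_\R/(N_{F'})_\R \to N_\R/(N_F)_\R$ has kernel $(N_F/N_{F'})_\R \cong \R^k$, and since $|\Star_\Sigma(F)|$ is translation-invariant under $(N_F)_\R$, one obtains
\[
|\Star_{\Sigma'}(F')|/N_{F'} = \pi^{-1}\bigl(|\Delta_{\M_F}|\bigr).
\]

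The next step is to realize this preimage as the underlying set of a matroid fan in $N/N_{F'}$. I would choose a basis $e_1, \ldots, e_{l'}$ of $N/N_{F'}$ extending the given basis $\tilde e_1, \ldots, \tilde e_l$ of $N/N_F$ for $\M_F$, with $e_i$ projecting to $\tilde e_i$ for $i \le l$ and $e_{l+1}, \ldots, e_{l'}$ forming a basis of $N_F/N_{F'}$. Then I would take $\M_{F'} = \M_F \oplus U_{k,k}$, the direct sum of $\M_F$ with the rank-$k$ uniform matroid on the new ground set $\{l+1, \ldots, l'\}$, each of whose elements is a coloop of $\M_{F'}$.

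The main obstacle I anticipate is verifying that, in these coordinates, $|\Delta_{\M_{F'}}| = \pi^{-1}(|\Delta_{\M_F}|)$. This should reduce to the standard principle that coloops of a matroid contribute lineality directions to the associated matroid fan: the flats of $\M_F \oplus U_{k,k}$ are precisely the disjoint unions $L \sqcup S$ with $L$ a flat of $\M_F$ and $S \subseteq \{l+1, \ldots, l'\}$, and as $S$ varies the rays $\rho_{L \sqcup S} = \R_+\bigl([e_L] + [e_S]\bigr)$ together with cones over chains of such flats sweep out all $(N_F/N_{F'})_\R$-translates of the cones of $\Delta_{\M_F}$, which is exactly the preimage $\pi^{-1}(|\Delta_{\M_F}|)$.
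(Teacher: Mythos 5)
Your proposal is correct and follows essentially the same route as the paper: both arguments pass from $F'$ to the unique $F$ with $\Int F' \subseteq \Int F$, split the exact sequence $0 \to N_F/N_{F'} \to N/N_{F'} \to N/N_F \to 0$ to extend the given basis, and define the new matroid as the direct sum of $\M_F$ with $k$ coloops (your $\M_F \oplus U_{k,k}$ is precisely the paper's $\M \sqcup G_1 \sqcup \cdots \sqcup G_l$ with $l = k$ copies of the rank-$1$ free matroid), with the observation that coloops contribute lineality to the matroid fan. The only difference is that you explicitly address the multiplicity condition, which the paper treats as obvious; this is a reasonable addition but does not change the substance of the argument.
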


\begin{proof}
Let $F'$ be a cell in $\Sigma'$ whose relative interior is contained in the relative interior of a cell $F$ of $\Sigma$.  Then, 
$\Star_{\Trop(X^\circ)}(F')=\Star_{\Trop(X^\circ)}(F)$ and
\[\Star_{\Trop(X^\circ)}(F)/(N_F)_\R=\left(\Star_{\Trop(X^\circ)}(F')/(N_{F'})_\R\right)/\left((N_{F}/N_F')_\R\right).\]
Suppose that $\M$ is the matroid fan of $\{0,\dots,n-\dim(F)\}$ associated to the above fan with respect to a basis $e_1,\dots,e_{n-\dim(F)}$.
Consider the short exact sequence of lattices
\[0\rightarrow N_F/N_{F'}\rightarrow N/N_{F'} \rightarrow N/N_F \rightarrow 0.\]
Pick a splitting $j:N/N_F\rightarrow N/N_{F'}$ and let $e'_1,\dots,e'_{n-\dim(F)}$ be the image of the basis under this splitting.  Set $l=\dim(F)-\dim(F')$, and let $f_1,\dots,f_l$ be a basis for $N_F/N_{F'}$.  
Let $G_1,\dots,G_l$ be copies of the unique rank $1$ matroid on $1$ element, and
define $\M'$ be the matroid on $n-\dim(F')+1$ elements given by
\[\M'=\M \sqcup G_1 \sqcup \dots \sqcup G_l.\]
It is straightforward to verify that the underlying set of $\Delta_{\M'}$ with respect to the basis $\{e'_1,\dots,e'_{n-\dim(F)},f_1,\dots,f_l\}$ is 
\[j_\R(|\Delta_\M|)+(N_F/N_{F'})_\R=\Star_{\Trop(X^\circ)}(F')/(N_{F'})_\R.\]
\end{proof}

\begin{proposition} \label{p:smooth} If $\Trop(X^\circ)$ is a matroidal tropical variety, then $X^\circ$ is sch\"{o}n.  Consequently, $X^\circ$ is smooth.
\end{proposition}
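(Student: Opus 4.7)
The plan is to verify the equivalent criterion of Proposition~\ref{prop:schon}, namely that every initial degeneration $\init_w X^\circ$ is smooth for $w \in |\Trop(X^\circ)|$. Fix such a $w$ and let $F$ be the unique face of a polyhedral structure $\Sigma$ witnessing that $\Trop(X^\circ)$ is matroidal, with $w$ in the relative interior of $F$. Since $\init_w X^\circ \cong T_F \times X_F^\circ$, and $T_F$ is a torus, it is equivalent to show that $X_F^\circ \subset U_F \cong (\C^*)^{n - \dim F}$ is smooth.

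By Speyer's description of initial tropicalizations (the last lemma of Section~\ref{s:tropical}), $\Trop(\init_w X^\circ) = \Star_{\Trop(X^\circ)}(F)$. Because $\init_w X^\circ$ is invariant under translation by $(N_F)_\R$, passing to the quotient torus $U_F = T/T_F$ one obtains
\[
\Trop(X_F^\circ) = \Star_{\Trop(X^\circ)}(F)/(N_F)_\R \subset (N/N_F)_\R .
\]
The matroidal hypothesis provides, with respect to a suitable integral basis of $N/N_F$, a matroid $\M_F$ on $\{0,1,\dots,n-\dim F\}$ such that this support coincides with $|\Delta_{\M_F}|$. Moreover, the top-dimensional cells of $\Trop(X_F^\circ)$ inherit multiplicity $1$ from those of $\Trop(X^\circ)$, since the stabilizer $N_F$ is exactly the lineality lattice being quotiented out. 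Thus the hypotheses of Proposition~\ref{p:duck} are met by $X_F^\circ \subset (\C^*)^{n-\dim F}$.

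Proposition~\ref{p:duck} then identifies the closure of $X_F^\circ$ in $\P^{n - \dim F}$ with a $(d - \dim F)$-dimensional linear subspace whose matroid is $\M_F$. In particular, $X_F^\circ$ is the intersection of a linear subspace with the open torus, so it is smooth. Consequently $\init_w X^\circ$ is smooth for every $w$, so $X^\circ$ is sch\"on by Proposition~\ref{prop:schon}, and then $X^\circ$ is smooth by Lemma~\ref{l:schonsmooth}. The only substantive step is verifying that the matroidal condition genuinely transfers from $\Trop(X^\circ)$ to $\Trop(X_F^\circ)$ under the star quotient, which is immediate from Speyer's lemma together with the $N_F$-invariance of $\init_w X^\circ$; the rest of the argument is a direct appeal to Proposition~\ref{p:duck} and the characterizations of sch\"onness already collected in Section~\ref{s:tropical}.
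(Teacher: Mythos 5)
Your proof is correct and follows the same line of reasoning as the paper: use Speyer's lemma to identify $\Trop(X_F^\circ)$ with the star quotient, invoke the matroidal hypothesis to recognize it as a matroid fan with multiplicity one, apply Proposition~\ref{p:duck} to conclude $X_F^\circ$ is a hyperplane arrangement complement (hence smooth), and finish with Propositions~\ref{prop:schon} and Lemma~\ref{l:schonsmooth}. The only cosmetic difference is that you are a bit more explicit than the paper about the multiplicity-one condition being inherited by the star quotient, but the underlying argument is identical.
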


\begin{proof}
We first show that every initial degeneration of $X^\circ$ is smooth.   If $w\in\Trop(X^\circ)$ is the relative interior of a cell $F$ of $\Sigma$, $\Star_{\Trop(X^\circ)}(F)/(N_F)_\R$ is a matroid fan. Now $\init_w X^\circ$ is $T_F$-invariant and we have
\[|\Trop(\init_w X^\circ/T_F)|=|\Trop(\init_w X^\circ)|/(N_F)_\R=\Star_{\Trop(X^\circ)}(F)/(N_F)_\R.\] 
By Proposition~\ref{p:duck}, $(\init_w X^\circ)/T_F$ is a hyperplane complement and $\init_w X^\circ$ is smooth.  
It follows from Proposition \ref{prop:schon} that $X$ is sch\"{o}n.  Consequently, by Lemma \ref{l:schonsmooth}, $X^\circ$ is smooth.
\end{proof}

Matroidal tropical varieties may therefore be thought of as having a very strong form of classical smoothness.  In general, even if $X^\circ$ is smooth there may be singular ${X'}^\circ$ with $\Trop(X^\circ)=\Trop({X'}^\circ)$.  A matroidal tropical variety, however, has no singular lift.  One could think in analogy with tropical general position of points: every classical lift of a set of tropical points in general position is in general position. 

Since $X^\circ$ is sch\"on, by Proposition \ref{p:luxtonqu}, $(X,\P(\Sigma))$ is a tropical pair for any choice of rational polyhedral structure $\Sigma$ on $\Trop(X^\circ)$.
For matroidal tropical varieties, there is no difference between the parameterizing complex $\Gamma_{X^\circ}$ and $\Trop(X^\circ)$:

\begin{lemma} Let $\Trop(X^\circ)$ be a matroidal tropical variety.  Then the natural parameterizing map $p:\Gamma_{X^\circ}\rightarrow\Trop(X^\circ)$ is a homeomorphism.
\end{lemma}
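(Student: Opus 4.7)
The plan is to reduce the homeomorphism statement to showing that for each face $F \in \Sigma$, the variety $X_F^\circ$ is irreducible. If this holds, then for each $F$ there is a unique irreducible component $Y_F$ of $X_F$ (namely the closure of $X_F^\circ$), so the faces of $\Gamma_{X^\circ}$ are in bijection with those of $\Sigma$ via $(F, Y_F) \mapsto F$. Incidences are preserved because $(F', Y_{F'})$ lies on the boundary of $(F, Y_F)$ precisely when $F'$ is a face of $F$ (the component containment condition is automatic from uniqueness). Since $p$ is cellular, mapping each closed face $(F, Y_F)$ homeomorphically onto $F$, bijectivity on faces upgrades to a global homeomorphism. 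If needed, I would first apply Proposition~\ref{p:normalcrossings} to pass to a normal crossings pair, using that the underlying space of $\Gamma_{X^\circ}$ is independent of this choice and that matroidality is preserved under refinement by the preceding lemma.

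First I would establish smoothness of each $X_F^\circ$: by Proposition~\ref{p:smooth}, $X^\circ$ is sch\"on, so by Lemma~\ref{l:schonrestriction} each $X_F^\circ$ is sch\"on in $U_F$, and hence smooth by Lemma~\ref{l:schonsmooth}. Smoothness reduces irreducibility of $X_F^\circ$ to connectedness.

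Next I would exploit the matroidal structure to identify each $X_F^\circ$ (up to torus factor) as an open subset of a linear subspace. For $w \in \Int(F)$, the isomorphism $\init_w X^\circ \cong T_F \times X_F^\circ$ shows that $X_F^\circ$ is irreducible if and only if the quotient $(\init_w X^\circ)/T_F$ is irreducible. By the last lemma of Section~\ref{s:tropical}, $\Trop(\init_w X^\circ) = \Star_{\Trop(X^\circ)}(F)$, and after quotienting by $(N_F)_\R$ its tropicalization is $|\Star_\Sigma(F)/(N_F)_\R|$, which by the matroidal hypothesis equals $|\Delta_{\M_F}|$ with every top-dimensional cell of multiplicity $1$. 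Proposition~\ref{p:duck} then implies that the closure of $(\init_w X^\circ)/T_F$ in the ambient projective space is a linear subspace, which is irreducible; its intersection with the torus is therefore irreducible as well, so $X_F^\circ$ is irreducible.

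The main obstacle I anticipate is bookkeeping rather than substance: one must carefully check that the polyhedral structure on $\Sigma$ used to define $\Gamma_{X^\circ}$ (a priori a normal crossings refinement) is compatible with the matroidal structure we apply Proposition~\ref{p:duck} to, and that matroidality descends through refinement. Both are handled by the preceding lemma on refinements preserving matroidality, and by the invariance of the topological space of $\Gamma_{X^\circ}$ under choice of $\P$ recorded in \cite[Proposition~4.1]{HelmKatz}. Once these are in place, the argument above yields that $p$ is a cellular bijection of CW complexes, hence a homeomorphism.
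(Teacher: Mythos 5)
Your proposal is essentially the paper's own argument, spelled out in more detail: the paper's proof also passes to a normal crossings pair, observes that each $X_F^\circ$ is (an open subset of) a linear space and hence irreducible, so exactly one face of $\Gamma_{X^\circ}$ lies over each face of $\Sigma$, giving a cellular bijection and thus a homeomorphism. Your extra steps (invoking Proposition~\ref{p:duck} for linearity of $(\init_w X^\circ)/T_F$ and the refinement lemma to preserve matroidality) are exactly the justifications the paper leaves implicit, having established them in the surrounding text.
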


\begin{proof}
Let $\P$  be a toric scheme such that $(X^\circ,\P)$ is a normal crossings pair.  
 Put the polyhedral structure on $\Trop(X^\circ)$ induced from $\Sigma$.  Each cell of $\Gamma_{(X^\circ,\P)}$ is of the form $(F,Y)$ where $F$ is a cell of $\Trop(X^\circ)$ and $Y$ is a component of $X_F^\circ$.  Since each $X_F^\circ$ is a linear space, it follows that there is only one cell of $\Gamma_{(X^\circ,\P)}$ lying above each cell of $\Trop(X^\circ)$.  Consequently, $p$ is an isomorphism of polyhedral complexes, and  hence a homeomorphism.
\end{proof}

Now, $\chi_{_{\M_F}}(q)$ can reconstructed from the geometry of $|\Trop(X^\circ)|$ by looking at the star quotient of $F$ and applying the methods above.

By Lemma \ref{l:hyper}, we have the following:

\begin{corollary}\label{c:sformula} Let $\Sigma$ be a polyhedral structure on $\Trop(X^\circ)$ with recession fan $\Delta$, and suppose that
$\Trop(X^\circ)$ is a matroidal tropical variety.
  For $F$ a cell in $\Trop(X^\circ)$, let $\M_F$ be the matroid %fan
   associated to its star quotient.   The tropical motivic nearby fiber of $X_\infty$ is given by 
\[\psi_{\Trr}=-\sum_{F\in\Sigma} (\L\chi_{_{\M_F}}(\L)-\chi_{_{\M_F}}(1))(1-\L)^{\dim(F)-\dim(\tau_F)-1}\]
\end{corollary}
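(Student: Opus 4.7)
The plan is to reduce everything to Definition~\ref{d:tmnf} by explicitly identifying the class $[X^\circ_F] \in K_0(\Var_\C)$ for each face $F\in\Sigma$, and then simplifying. The key observation is that, for a matroidal tropical variety, every $X^\circ_F$ is a hyperplane arrangement complement whose motivic class is governed by Lemma~\ref{l:hyper}.

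First, recall that $\init_w X^\circ \cong T_F\times X_F^\circ$ for $w\in\Int(F)$, and that by the Speyer lemma at the end of Section~\ref{s:tropical}, $\Trop(\init_w X^\circ) = \Star_{\Trop(X^\circ)}(F)$. Quotienting by the $T_F$-action, one obtains $\Trop(X^\circ_F) = \Star_{\Trop(X^\circ)}(F)/(N_F)_\R$, which by the matroidal hypothesis has the same support as the matroid fan $\Delta_{\M_F}$. Moreover, since every top-dimensional cell of $\Trop(X^\circ)$ has multiplicity $1$, the same is true of the star quotient. Hence Proposition~\ref{p:duck}, applied to $X^\circ_F\subset U_F\cong(\C^*)^{\codim F}$ over $\bk=\C$, implies that $X^\circ_F$ is the intersection of a linear subspace of $\P^{\codim F}$ with $U_F$ whose matroid is precisely $\M_F$.

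With this identification, Lemma~\ref{l:hyper} gives
\[
[X^\circ_F] = \frac{\L\,\chi_{_{\M_F}}(\L)-\chi_{_{\M_F}}(1)}{\L-1}.
\]
Substituting this into Definition~\ref{d:tmnf} yields
\[
\psi_{\Trr} = \sum_{F\in\Sigma}\frac{\L\,\chi_{_{\M_F}}(\L)-\chi_{_{\M_F}}(1)}{\L-1}\,(1-\L)^{\dim F-\dim\tau_F},
\]
and using the identity $(1-\L)^k/(\L-1) = -(1-\L)^{k-1}$ produces exactly the claimed expression.

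The only genuine subtlety is the second step, namely the careful matching of tori, lattices, and bases so that Proposition~\ref{p:duck} applies and the resulting matroid is $\M_F$ (rather than some relabeling). The matroidal hypothesis supplies the required basis of $N/N_F$ and the multiplicity-one condition, so this matching is essentially bookkeeping; after that, the rest of the proof is a formal computation in $K_0(\Var_\C)$.
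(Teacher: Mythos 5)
Your proof is correct and follows essentially the same route the paper takes: the paper's stated justification is simply ``By Lemma~\ref{l:hyper}, we have the following,'' relying on Proposition~\ref{p:smooth} (itself resting on Proposition~\ref{p:duck}) to identify each $X_F^\circ$ as a hyperplane arrangement complement with matroid $\M_F$, after which the formula is a direct substitution into Definition~\ref{d:tmnf}. You have simply spelled out the identification step and the algebraic simplification $(1-\L)^k/(\L-1) = -(1-\L)^{k-1}$ that the paper leaves implicit.
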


\begin{corollary}\label{c:sEuler} With the notation as above, the Euler characteristic of the generic fiber $X_{\gen}$ is given by
\[e(X_{\gen})= \sum_{\substack{F\in\Sigma\\ \dim(F)=\dim(\tau_F)}} (\chi_{_{\M_F}}(1)+\chi'_{_{\M_F}}(1)),\]
where $\chi'_{_{\M_F}}(q)$ denotes the derivative of  $\chi_{_{\M_F}}(q)$.
\end{corollary}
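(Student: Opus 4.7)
The plan is to combine Corollary~\ref{c:topological} with Lemma~\ref{l:hyper}. By Proposition~\ref{p:smooth}, matroidality forces $X^\circ$ to be sch\"on, so the hypotheses of Corollary~\ref{c:topological} apply (possibly after first replacing $\Sigma$ by a unimodular refinement; matroidality is preserved under refinement by the lemma preceding Proposition~\ref{p:smooth}, and this does not affect the left side $e(X_{\gen})$, which is intrinsic to the generic fiber). Corollary~\ref{c:topological} then yields
\[
e(X_{\gen}) \;=\; \sum_{\substack{F\in\Sigma\\ \dim F=\dim\tau_F}} e(X_F^\circ),
\]
so it suffices to identify $e(X_F^\circ)$ with $\chi_{\M_F}(1)+\chi'_{\M_F}(1)$ for each such face $F$.

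Next I would use matroidality locally. By the defining property of a matroidal tropical variety, $\Star_\Sigma(F)/N_F$ is the matroid fan of $\M_F$, and the discussion in Section~\ref{s:tropical} gives a (non-canonical) splitting $\init_w X^\circ \cong X_F^\circ\times T_F$. Combined with Proposition~\ref{p:duck} applied to $\init_w X^\circ/T_F$, this identifies $X_F^\circ$ with the intersection of a linear subspace of $\P^{\dim \Star_\Sigma(F)/N_F}$ with the corresponding torus, whose matroid is $\M_F$. Lemma~\ref{l:hyper} then gives
\[
[X_F^\circ] \;=\; \frac{\L\,\chi_{\M_F}(\L)-\chi_{\M_F}(1)}{\L-1} \quad \text{in } K_0(\Var_\C).
\]

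Finally, I would apply the Euler characteristic ring homomorphism $e:K_0(\Var_\C)\to\Z$, which sends $\L\mapsto 1$. The right-hand side is an honest polynomial in $\L$ (the numerator is divisible by $\L-1$ since it vanishes at $\L=1$), so $e(X_F^\circ)$ is obtained by evaluating this quotient polynomial at $\L=1$. Writing $\L\chi_{\M_F}(\L)-\chi_{\M_F}(1)=(\L-1)P(\L)$ and differentiating at $\L=1$ yields
\[
P(1) \;=\; \chi_{\M_F}(1)+\chi'_{\M_F}(1),
\]
i.e.\ the standard L'H\^opital evaluation. Summing over all $F$ with $\dim F=\dim\tau_F$ produces the desired formula.

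The only delicate point is the unimodularity hypothesis in Corollary~\ref{c:topological}; I view this as the main obstacle, since $\tri$ need not be unimodular for a matroidal tropical variety. The cleanest resolution is to bypass Corollary~\ref{c:topological} altogether and apply $e$ directly to Corollary~\ref{c:sformula}: the summands with $\dim F>\dim\tau_F+1$ vanish because $(1-\L)^{\dim F-\dim\tau_F-1}$ does; the summands with $\dim F=\dim\tau_F+1$ vanish because $\L\chi_{\M_F}(\L)-\chi_{\M_F}(1)$ vanishes at $\L=1$; and the summands with $\dim F=\dim\tau_F$ contribute $\chi_{\M_F}(1)+\chi'_{\M_F}(1)$ by the same L'H\^opital computation above. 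One then invokes $e(\psi_{(X^\circ,\triangle)})=e(X_{\gen})$, which follows from Theorem~\ref{t:Schon} after choosing a unimodular refinement of $\triangle$ (the equality of Euler characteristics being insensitive to such a refinement since both sides compute the same intrinsic invariant).
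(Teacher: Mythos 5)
Your proposal follows essentially the same route as the paper: reduce via Corollary~\ref{c:topological} to $\sum_{\dim F=\dim\tau_F}e(X_F^\circ)$, apply Lemma~\ref{l:hyper} to express $[X_F^\circ]$ in terms of $\chi_{\M_F}$, and evaluate at $\L=1$ by Taylor expansion (L'H\^opital) of the numerator around $\L=1$, which is exactly the paper's three-step argument; your ``cleaner'' alternative of applying $e$ directly to Corollary~\ref{c:sformula} is the same computation in a different order. Your concern about the unimodularity hypothesis in Corollary~\ref{c:topological} is a real issue that the paper passes over silently, but be aware your proposed resolution is slightly circular: Theorem~\ref{t:independence} only gives invariance of $\psi$ for a \emph{fixed} recession fan, so invariance of $e(\psi_{(X^\circ,\triangle)})$ under unimodular refinement of $\triangle$ cannot simply be asserted by saying both sides compute $e(X_{\gen})$, since that is precisely the identity being established.
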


\begin{proof}
By Corollary \ref{c:topological},
\[e(X_{\gen}) = \sum_{\substack{F \in \Sigma \\ \dim F = \dim \tau_F}} \, e(X_{F}^\circ). \]
Now, $e(X_F^\circ)$ is determined by specializing $\L=1$ in
\[[X_F^\circ]=\frac{\L\chi_{_{\M_F}}(\L)-\chi_{_{\M_F}}(1)}{\L-1}.\]
By writing the numerator as a Taylor polynomial in $(\L-1)$, we see that the Euler characteristic is $\chi_{_{\M_F}}(1) + \chi'_{_{\M_F}}(1).$
\end{proof}

\begin{example}
Let $X^\circ$ be a curve such that $\Trop(X^\circ)$ is a matroidal tropical variety.  Let $\Sigma$ be a graph structure on $\Trop(X^\circ)$.  Then $X$ corresponds to a family of curves degenerating to a union of rational curves.  Each vertex $v$ of $\Sigma$ contributes $(\L-E(v)+1)$ to the tropical motivic nearby fiber where $E(v)$ is the number of edges containing $v$.  Each bounded edge contributes $-\L+1$ while each unbounded edge contributes $1$.  Let $V,B,U$ denotes the number of vertices, bounded edges, and unbounded edges, respectively.  Then,
\[\psi_{\Trr}=\left(\sum_v (\L-E(v)+1)\right)+B(-\L+1)+U=\L(v-B)+(v-B)=(\L+1)\chi(\Sigma).\]
Specializing to $\L=1$, we get $e(X_{\gen})=2\chi(\Sigma)$, and hence $X_{\gen}$ is a smooth curve of genus $h^1(\Sigma)$. 
\end{example}

\section{Open problems}\label{s:open}

We briefly mention some open problems and directions for further research.

\begin{enumerate}

\item Can one give a geometric interpretation of the tropical motivic nearby fiber $\psi_{(X^\circ, \tri)}$ when $(X^\circ, \P)$ is a tropical pair, but $X^\circ$ is not necessarily sch\"on? What about its evaluation $e(\psi_{(X^\circ, \tri)})$ under the specialization $K_0(\Var_\C) \rightarrow \Z$ which takes the class of a variety $V$ to its Euler characteristic $e(V)$?

\item To what extent can one relax the condition of `smoothness' and replace it with `orbifold singularities'? 
More specifically, can one associate a limit mixed Hodge structure to a family $f: X \rightarrow \D^*$ over the punctured disk in which the fibers have at worst orbifold singularities? Can one compute a `motivic nearby fiber' given a semi-stable degeneration in which the central fiber is reduced and has `orbifold normal crossings'?

\item When $\Trop(X^\circ)$ is matroidal, can one use Proposition~\ref{p:duck} to explicitly write down all terms and maps in the corresponding 
Steenbrink spectral sequence  \cite[Corollary~4.20]{SteLimits}, and hence deduce a reasonable combinatorial formula for  the corresponding limit mixed Hodge numbers? Can one give interesting examples of matroidal tropical varieties which are not curves or hypersurfaces?

\item
What is the combinatorial significance of  property \eqref{e:hard} of the logarithm $N$ of the monodromy map when $X^\circ$ is a sch\"on hypersurface? That is, what is the significance of the fact that the 
sequences $\{ h^{p + i,i}(H^{d - 1}(X_\infty))  \mid 0 \le i \le d - 1 - p \}$ are symmetric and unimodal
for $0 \le p \le d - 1$?

\end{enumerate}

\appendix

\section{The Hodge-Deligne polynomial of a non-degenerate hypersurface of a torus}\label{s:hdp}

The goal of this section is to state a formula for the Hodge-Deligne polynomial of a non-degenerate hypersurface $Z$ of a torus with Newton polytope $P$, as it appears in \cite{BMString}. Throughout, $P$
is a $(d + 1)$-dimensional lattice polytope in a lattice $M$.  

If $B$ is a finite poset, then
the M\"obius function $\mu_{B}: B \times B \rightarrow \mathbb{Z}$ is defined recursively as follows,
\[
\mu_{B}(x,y) =  \left\{ \begin{array}{ll}
1 & \textrm{  if  } x = y \\
0 & \textrm{  if  } x > y \\
- \sum_{x < z \leq y} \mu_{B}(z,y) = - \sum_{x \leq z < y} \mu_{B}(x,z) & \textrm{  if  } x < y 
\end{array} \right.
.
\]
%and satisfies the property that for any function $h: B \rightarrow A$ to an abelian group $A$, 
%\begin{equation}\label{inversion}
%h(x) = \sum_{x \leq y} \mu_{B}(x,y) g(y), \textrm{ where } g(y) = \sum_{y \leq z} h(z).
%\end{equation}
For any pair $z \leq x$ in $B$, we can consider the interval $[z,x] = \{ y \in B \mid z \leq y \leq x \}$. 
Suppose that $B$ has a minimal element $0$ and a maximal element $1$, and %satisfies the property
that every maximal chain in $B$ has the same length. The \emph{rank} $\rho(x)$ of an element $x$ in $B$ is equal to the length of a maximal chain in $[0,x]$, and the rank of $B$ is $\rho(1)$. In this case, we say that $B$ is \emph{Eulerian} if $\mu_{B}(x,y) = (-1)^{\rho(x) - \rho(y)}$ for $x \leq y$. 

\begin{remark}\label{evenodd}
Alternatively, one verifies that $B$ is Eulerian if and only if every interval of non-zero length contains as many elements of even rank as odd rank. 
\end{remark}

\begin{example}%\cite{ZieLectures}\label{polytope}
The poset of faces of a polytope $P$ (including the empty face) is an Eulerian poset under inclusion. 
%(c.f. Lemma \ref{polyhedra}). 
If we consider the empty face to have dimension $-1$, then $\rho(Q) = \dim Q + 1$, for any face $Q$ of $P$. 
\end{example}

\begin{example}
The Boolean algebra on $r$ elements consists of all subsets of a set of cardinality $r$ and forms an Eulerian poset under inclusion. 
%For example, if $P$ is a simple polytope and $Q$ is a face of $P$, then the poset $[Q,P]$ is the Boolean algebra on $n - \dim Q$ elements. 
\end{example}

The $G$-polynomial of an Eulerian poset was introduced by Stanley. 

% one can associate a polynomial recursively as follows:
\begin{definition}\cite{StaGeneralized}
If $B$ is an Eulerian poset of rank $n$, then 
\begin{equation*}
G(B,t) =  \left\{ \begin{array}{ll}
1 & \textrm{  if  } n = 0 \\ -
\tau_{ < n/2 } (\sum_{0 < x \leq 1} (t - 1)^{\rho(x)} G([x,1],t))  & \textrm{  if  } n > 0
\end{array} \right.
,
\end{equation*} 
where $\tau_{ < n/2 } $ is the truncation map which takes a polynomial and associates all the terms of degree less than $n/2$. 
\end{definition}

\begin{example}
If $B$ is the Boolean algebra on $r$ elements, then one verifies that $G(B,t) = 1$. 
\end{example}

If $Q$ is a face of $P$, then 
recall that the \emph{Ehrhart polynomial} $f_Q(m)$ of $Q$  is defined by 
$f_Q(m) = \#(mQ \cap M)$,
for each positive integer $m$, and its generating series has the form
\[
\sum_{m \ge 0} f_Q(m)t^m = \frac{h_Q^*(t)}{(1 - t)^{\dim Q + 1}},
\]
where $h_Q^*(t)$ % = \sum_{k = 0}^{\dim Q} h^*_{Q,k} t^k$
 is a polynomial of degree at most $\dim Q$ with non-negative integer coefficients (see, for example, \cite{BRComputing}). 
%If $\vol(Q)$ denotes the Euclidean volume of $Q \subseteq N_\R$, then $h_Q^*(1) = (\dim Q)!\vol(Q)$. 
If $Q$ is the empty face of $P$, then we set $h_Q^*(t) = 1$. 

%As in Remark \ref{polytope}, consider the case when $B$ is the poset of all faces of $P$, including the empty set, and
 %set $\delta_{\emptyset}(t) = 1$. % and $\dim \emptyset = -1$. 
%If $z > 0$ in $\mathscr{P}$ corresponds to the face $Q_{z}$ of $P$, then we write $\delta_{z}(t): = \delta_{Q_{z}}(t)$. We define $\delta_{0}(t) := 1$. 

%For each element $z$ in $\mathscr{P}$ we define the following polynomials: 
%Consider an element $z$ in $\mathscr{P}$ corresponding to a cone $\sigma_{z}$ in $\triangle_{P}$ and a face $Q_{z}$ of $P$. 
%depending on the poset $\mathscr{P}$ and the polytope $P$:
\begin{definition}\cite{BMString}
If $Q$ is a (possibly empty) face of $P$, then 
%Let $z$ be an element of $\mathscr{P}$.
\begin{equation*}
\tilde{S}(Q, t) = \sum_{Q' \subseteq Q} (-1)^{\dim Q - \dim Q'}h^*_{Q'}(t) G([Q',Q],t). 
\end{equation*}
%where $\sigma_{Q}$ denotes the cone in $\triangle_{P}$ corresponding to the face $Q$. 
\end{definition}

We can now present Batyrev and Borisov's formula for $E(Z;u,v)$ as stated in \cite{BMString}. We remark that an alternative formula was earlier obtained by Khovanski{\u\i} in unpublished work. If $B$ is an Eulerian poset, then let $B^*$ denote the Eulerian poset with all orderings between elements reversed.

\begin{theorem}\label{TheoremA}\cite{BBMirror, BMString}
If $Z$ is a non-degenerate hypersurface with respect to a lattice polytope $P$ of dimension $d + 1$, 
then 
\begin{equation*}
E(Z;u,v) = (1/uv)[(uv -1)^{d + 1} + (-1)^{d} \sum_{Q \subseteq P} u^{\dim Q + 1}
\tilde{S}(Q, u^{-1}v)G([Q,P]^{*},uv)].
\end{equation*}
\end{theorem}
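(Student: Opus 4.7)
The plan is to derive the formula as a combinatorial reorganization of the Danilov-Khovanskii algorithm \cite{DKAlgorithm}, combined with Stanley's interpretation of the $G$-polynomial as the $E$-polynomial of the intersection cohomology of a projective toric variety.

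First, I would pass to the projective closure $\bar Z$ of $Z$ in the toric variety $Y_P$ associated to $P$. Since $Z$ is non-degenerate, $\bar Z$ is sch\"on and decomposes into locally closed strata $Z_Q^\circ = \bar Z \cap T_Q$, one for each face $Q$ of $P$ with $\dim Q \ge 1$, where each $Z_Q^\circ$ is again a non-degenerate hypersurface in its ambient torus with Newton polytope $Q$ (and $Z = Z_P^\circ$). Additivity of $E$ then yields
\[
E(Z;u,v) = E(\bar Z;u,v) - \sum_{Q \subsetneq P} E(Z_Q^\circ; u,v),
\]
and iterating this stratification, together with M\"obius inversion on the Eulerian face poset of $P$, expresses $E(Z;u,v)$ as an alternating sum over faces $Q$ of the Hodge-Deligne polynomials of the smooth projective hypersurfaces $\bar Z_Q \subseteq Y_Q$.

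Next, I would decompose each $E(\bar Z_Q; u,v)$ into an ``ambient'' part and a ``primitive'' part. The Lefschetz hyperplane theorem identifies the non-primitive cohomology of $\bar Z_Q$ with the intersection cohomology of $Y_Q$, whose $E$-polynomial is, by Stanley's theorem, the $G$-polynomial of the dual face interval evaluated at $uv$. The primitive middle cohomology of $\bar Z_Q$, by Danilov-Khovanskii, is pure and has Hodge numbers encoded by the $h^*$-polynomials of faces of $Q$, with a Hodge-level shift of $\dim Q$ arising from the residue isomorphism on the top holomorphic differential; this shift is what produces the factors $u^{\dim Q + 1}\tilde S(Q, u^{-1}v)$ in the claimed formula.

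The main obstacle is the final bookkeeping. After substituting these expressions into the M\"obius-inverted sum, one obtains a double sum indexed by pairs $Q' \subseteq Q \subseteq P$ whose inner alternation must be collapsed. The inner alternation is precisely the definition of $\tilde S(Q, u^{-1}v)$, while the factor $G([Q,P]^*, uv)$ emerges from grouping the intersection-cohomological contributions by means of Kalai's convolution identity for the $G$-polynomial on Eulerian intervals. The $(uv-1)^{d+1}$ term accounts for the Hodge-Deligne polynomial of the open torus $T_P \cong (\C^*)^{d+1}$, encoding the boundary correction when comparing $E(Z)$ to $E(\bar Z)$ minus the toric strata on which $\bar Z$ does not sit. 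Verifying the compatibility between the two gradings in play, the $u$-grading recording the Hodge filtration and the $uv$-grading recording the weight filtration, is the technical heart of the argument; once those formal identities between $G$, $h^*$, and $\tilde S$ are established, the announced closed form follows by direct comparison of coefficients.
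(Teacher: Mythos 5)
This theorem is stated as a citation in the paper, not proved there: the text reads ``We can now present Batyrev and Borisov's formula for $E(Z;u,v)$ as stated in \cite{BMString},'' and the proof is deferred entirely to \cite{BBMirror} and \cite{BMString}. So there is no in-paper argument for you to be measured against.

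As for your sketch on its own terms: the high-level skeleton (stratify the closure $\bar Z \subset Y_P$ by torus orbits, apply additivity and M\"obius inversion over the face poset, feed in Danilov--Khovanski{\u\i} together with Stanley's $G$-polynomial description of toric intersection cohomology, and collapse a convolution over pairs $Q' \subseteq Q \subseteq P$) is indeed the kind of argument Batyrev--Borisov and Borisov--Mavlyutov run. But there is a genuine gap at the second step. You decompose $E(\bar Z_Q)$ into ``ambient'' and ``primitive'' parts via the Lefschetz hyperplane theorem and describe the primitive piece as ``pure middle cohomology of a smooth projective hypersurface.'' That presupposes $\bar Z_Q$ is smooth and projective, but here $Q$ ranges over arbitrary faces of an arbitrary $(d+1)$-dimensional lattice polytope $P$, so $Y_Q$ is typically a singular projective toric variety and $\bar Z_Q$ inherits singularities from it; $H^*(\bar Z_Q)$ then carries a genuinely mixed (not pure) Hodge structure, $\bar Z_Q$ need not satisfy Poincar\'e duality, and the Lefschetz-hyperplane relationship with $IH^*(Y_Q)$ does not simply split $H^*(\bar Z_Q)$ as you claim. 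This is precisely why the proofs in the literature stay on the level of the \emph{open} strata $Z_Q^\circ$ (or pass through intersection cohomology of $\bar Z_Q$) and invest most of the work in the $h^*$--$G$--$\tilde S$ convolution identities rather than in a Lefschetz decomposition of singular closures. You also relegate the entire verification of those identities — which you call ``the technical heart'' — to an assertion; without them the formula does not follow, and the attribution of the inner sum to ``Kalai's convolution identity'' is a label, not an argument. So while the outline is morally on the right track, as written it would not compile into a proof.
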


\bibliographystyle{amsplain}
\def\cprime{$'$}
\providecommand{\bysame}{\leavevmode\hbox to3em{\hrulefill}\thinspace}
\providecommand{\MR}{\relax\ifhmode\unskip\space\fi MR }
% \MRhref is called by the amsart/book/proc definition of \MR.
\providecommand{\MRhref}[2]{%
  \href{http://www.ams.org/mathscinet-getitem?mr=#1}{#2}
}
\providecommand{\href}[2]{#2}

\end{document}